\documentclass[11pt]{amsart}

\usepackage{comment}

\usepackage[textwidth=17.5cm,textheight=22.275cm,
 height=24.275cm,width=18cm]{geometry}
\usepackage[utf8]{inputenc}
\usepackage[foot]{amsaddr}
\usepackage[english]{babel}
\usepackage{amssymb,amsthm}
\usepackage{mathrsfs}
\usepackage{mathtools}
\mathtoolsset{showonlyrefs,centercolon}
\usepackage[renew-dots,renew-matrix]{nicematrix}
\usepackage{enumerate}
\usepackage{Baskervaldx}
\usepackage{newtxmath}
\usepackage[pdftex,hyperfootnotes,hidelinks]{hyperref}

\allowdisplaybreaks[1]
\theoremstyle{plain}

\newtheorem{teo}{Theorem}[section]
\newtheorem{coro}[teo]{Corollary}

\newtheorem{pro}[teo]{Proposition}

\theoremstyle{definition}

\theoremstyle{remark}
\newtheorem{rem}[teo]{Remark}

\renewcommand{\d}{\operatorname{d}}
\newcommand{\Exp}[1]{\operatorname{e}^{#1}}
\renewcommand{\Re}{\operatorname{Re}}
\newcommand{\N}{\mathbb{N}}
\newcommand{\R}{\mathbb{R}}
\newcommand*\pFqskip{8mu}
\catcode`,\active
\newcommand*\pFq{\begingroup
 \catcode`\,\active
 \def ,{\mskip\pFqskip\relax}%
 \dopFq
}
\catcode`\,12
\def\dopFq#1#2#3#4#5{%
 {}_{#1}F_{#2}\biggl[\genfrac..{0pt}{}{#3}{#4};#5\biggr]%
 \endgroup
}
\title[Pi\~neiro mixed orthogonal polynomials and bidiagonal factorization]{Pi\~neiro mixed orthogonal polynomials and bidiagonal factorization}
\date{July 29, 2026}

\keywords{Mixed multiple orthogonal polynomials, Pi\~neiro systems, hypergeometric series, recurrence matrices, Christoffel transformations, bidiagonal factorization}

\author[Branquinho]{Amílcar Branquinho\( ^1\)}
\address{\( ^1\)CMUC, Departamento de Matem\'atica,
	Universidade de Coimbra, Largo D. Dinis, 3000-143 Coimbra, Portugal}
\email{\(^1\)ajplb@mat.uc.pt}

\author[Díaz]{Juan E. F. Díaz\(^{2}\)}
\address{\(^2\)CIDMA, Departamento de Matemática, Universidade de Aveiro, 3810-193 Aveiro, Portugal}
\email{\(^2\)juan.enri@ua.pt}

\author[Foulquié]{Ana Foulquié-Moreno\(^3\)}
\address{\(^3\)CIDMA, Departamento de Matemática, Universidade de Aveiro, 3810-193 Aveiro, Portugal}
\email{\(^3\)foulquie@ua.pt}

\author[Mañas]{Manuel Mañas\(^4\)}
\address{\(^4\)Departamento de Física Teórica, Universidad Complutense de Madrid, 28040-Madrid, Spain}
\email{\(^4\)manuel.manas@ucm.es}

\thanks{\(^1\)Acknowledges Centre for Mathematics of the University of Coimbra funded
by the Portuguese Government through FCT/MCTES, DOI: 10.54499/UIDB/00324/2020.}

\thanks{\(^2\) and \(^3\) acknowledge the Center for Research and Development
in Mathematics and Applications (CIDMA) from University of Aveiro funded by the Portuguese Foundation
for Science and Technology (FCT) through projects DOI: 10.54499/UIDB/04106/2020 and DOI:
10.54499/UIDP/04106/2020. Additionally, \(^2\) acknowledges PhD contract
DOI: 10.54499/UI/BD/152576/2022 from FCT}

\thanks{\(^2\) and \(^4\) acknowledge research project [PID2021- 122154NB-I00],
\emph{Ortogonalidad y Aproximación con Aplicaciones en Machine Learning y Teoría de la Probabilidad}, funded by \href{https://doi.org/10.13039/501100011033}{MICIU/AEI/10.13039/501100011033} and by \textquotedblleft ERDF A Way of Making Europe\textquotedblright.}

\subjclass{42C05,33C45,33C47}

\begin{document}

\begin{abstract}
We study mixed multiple orthogonal polynomials associated with rank-one
matrices of measures and their recurrence relations. On the step line we
identify the corresponding banded recurrence matrix through the Gauss--Borel
factorization of the moment matrix. Elementary Christoffel transformations
lead to a factorization of this recurrence matrix into lower and upper
bidiagonal factors. For the mixed Pi\~neiro system we derive contour-integral
and generalized hypergeometric representations and formulas for the
recurrence and factorization coefficients.
\end{abstract}

\maketitle


\section{Introduction}

 Mixed multiple orthogonal polynomials \cite{afm,Evi_Arno,sorokin} constitute a natural extension of the classical theory of multiple orthogonal polynomials. They arise in several areas of mathematics and mathematical physics. In probability theory, for example, they appear in the analysis of stochastic models such as Brownian bridges and systems of non-intersecting Brownian motions with \(p\) starting points and \(q \) endpoints \cite{Evi_Arno}. They are also closely connected with multicomponent Toda lattices \cite{adler,afm}. Their relevance extends to number theory, where mixed Hermite-Pad\'e approximation has proved to be a powerful tool. A celebrated example is Ap\'ery's proof of the irrationality of \( \zeta(3) \), which relies on a mixed Hermite-Pad\'e approximation involving three functions \cite{Apery}.
Similar approximation techniques have subsequently been employed to show that infinitely many odd values of the Riemann zeta function are irrational \cite{Ball_Rivoal}, as well as to establish that at least one among \( \zeta(5), \zeta(7), \zeta(9) \), and \( \zeta(11) \) is irrational \cite{Zudilin}.

Bidiagonal factorizations are central tools in the analysis of structured
matrices; see, for example, \cite{Fallat-Johnson,Pinkus book,Contemporary}.
They are also one of the fundamental ingredients in spectral Favard theorems
for banded operators \cite{BFM2,BFM1,BFM1_1}. Beyond this setting,
bidiagonal factorizations appear naturally in Hermite--Pad\'e approximation~\cite{Aptekarev_Kaliaguine_VanIseghem}, integrable systems and Darboux
transformations \cite{dolores_ana_ab1}, and Christoffel transformations
\cite{BFM_22}. From~the probabilistic viewpoint, they correspond to
stochastic factorizations of transition matrices of Markov chains~\cite{BDFM_finite}. Explicit factorizations for recurrence matrices of
several families of multiple orthogonal polynomials have been obtained in
\cite{Aptekarev_Kaliaguine_VanIseghem,BFM_24,BFM_23,Lima-Loureiro}; further
connections with branched continued fractions and combinatorics appear in
\cite{Lima,Sokal,BDFHM}.

Reference~\cite{BFM1} establishes a simultaneous Gaussian quadrature formula
for mixed orthogonality. More recently, the authors of
\cite{Laudadio-Mastronardi-Dooren} developed a method for computing the nodes and weights of simultaneous
n-point Gaussian quadrature rules.
Their approach computes the relevant eigenvalues and eigenvectors in a
numerically stable way.

We study recurrence relations satisfied by mixed orthogonal linear forms and
their type-I and type-II polynomial components. For step-line multi-indices,
we refine the algorithm of \cite{banda_bidiagonal} for the bidiagonal
factorization of the recurrence matrix. In our normalization,
the diagonal entries of the lower bidiagonal factors and the superdiagonal
entries of the upper bidiagonal factors equal \(1\). As a case study, we
consider mixed Pi\~neiro polynomials. We derive contour-integral and
hypergeometric representations for both types of mixed linear forms, using
Mellin-transform techniques related to \cite{BDFMW,VAWolfs,Wolfs}. These
representations yield recurrence coefficients, from which the factorization
coefficients on the step line are obtained exactly.

We begin by recalling the required definitions.

\subsection{Generalized hypergeometric series}

For \(n\in\N_0\), define the Pochhammer symbol algebraically by
\begin{align*}
 (z)_0=1, && (z)_n=\prod_{r=0}^{n-1}(z+r), && n\ge1.
\end{align*}
Whenever the quotient is defined, this agrees with
\((z)_n=\Gamma(z+n)/\Gamma(z)\); elsewhere the algebraic definition, or its
meromorphic continuation after cancellation, is used. For later compact
step-line formulas we also~use
\begin{align}
\label{eq:negative-pochhammer}
 (z)_{-n}:=\frac{1}{(z-n)_n},&& n\in\N,
\end{align}
away from the poles of the right-hand side.

The generalized hypergeometric series is
		\begin{align}
	\label{HF}
	\pFq{p}{q}{a_1,\ldots,a_p}{\alpha_1,\ldots,\alpha_q}{x}\coloneqq \sum_{l=0}^{\infty}\dfrac{(a_1)_l\cdots(a_p)_l}{(\alpha_1)_l\cdots(\alpha_q)_l}\dfrac{x^l}{l!},
	\end{align}
provided that no denominator Pochhammer vanishes before termination. All
hypergeometric series used below terminate because one upper parameter is a
nonpositive integer; thus no additional convergence condition is needed in
the formulas of this paper. Here Euler's gamma function is
\begin{align*}
\Gamma(z)\coloneq\int_0^\infty t^{z-1}\Exp{-t}\d t, && \Re{z}>0.
\end{align*}

\begin{rem}
	If \(n\in\N_0\), then
\begin{align*}
\displaystyle
		(-n)_m=
\begin{cases}
		0,\text{ if } m>n,\\
		(-1)^m\frac{n!}{(n-m)!},\text{ if } m\leqslant n.
\end{cases}
\end{align*}
	This implies that if any of the \(a_1,\ldots,a_p\)
	is a negative integer, then the corresponding generalized hypergeometric series \eqref{HF} is finite. Moreover, if any of the \(a_1,\ldots,a_p\) is zero, then the corresponding function \eqref{HF} equals \(1\).
\end{rem}

\subsection{Mixed orthogonal polynomials}

We consider a matrix of measures
\begin{align*}
	\d	\mu=\begin{bNiceMatrix}
 \d\mu_{1,1}&\Cdots &\d\mu_{1,p}\\
 \Vdots & & \Vdots\\
 \d	\mu_{q,1}&\Cdots &\d\mu_{q,p}
	\end{bNiceMatrix},
\end{align*}
where the measures \(\mu_{i,j}\) are supported on the interval \(\Delta \subseteq \mathbb{R}\). \par
Let \(\vec n=(n_1,\ldots,n_p)\) and
\(\vec m=(m_1,\ldots,m_q)\) be multi-indices, and write
\(|\vec n|=n_1+\cdots+n_p\) and \(|\vec m|=m_1+\cdots+m_q\).
We use the standard convention that \(n_i=0\) implies
\(A^{(i)}\equiv0\), and \(m_j=0\) implies \(B^{(j)}\equiv0\).
Accordingly, formulas containing \((n_i-1)!\) or \((m_j-1)!\) are asserted
only when \(n_i\ge1\) or \(m_j\ge1\), respectively.

If \(|\vec n|=|\vec m|+1\), there is a vector polynomial
\begin{align*}
 (A_{\vec n,\vec m}^{(1)},\ldots,A_{\vec n,\vec m}^{(p)}),
 && \deg A_{\vec n,\vec m}^{(i)}\le n_i-1,
\end{align*}
satisfying the mixed orthogonality conditions
\begin{align}
\label{eq:OMtypeI}
\sum_{i=1}^p \int_{\Delta} A_{\vec{n},\vec{m}}^{(i)}(x)x^k\,\d\mu_{j,i}(x)=0,
&& 0\le k\le m_j-1,&& 1\le j\le q.
\end{align}
Analogously, if \(|\vec m|=|\vec n|+1\), there is a vector polynomial
\begin{align*}
 (B_{\vec n,\vec m}^{(1)},\ldots,B_{\vec n,\vec m}^{(q)}),
 && \deg B_{\vec n,\vec m}^{(j)}\le m_j-1,
\end{align*}
satisfying
\begin{align}
\label{eq:OMtypeII}
\sum_{j=1}^q \int_{\Delta} B_{\vec{n},\vec{m}}^{(j)}(x)x^k\,\d\mu_{j,i}(x)=0,
&& 0\le k\le n_i-1,&& 1\le i\le p.
\end{align}

\subsection{Mixed orthogonal polynomials for rank-one matrix measures}
Let \(\mathrm d w\) be a positive Borel measure on \(\Delta\), and consider
positive measurable weight functions
\begin{align*}
	u_1,\ldots,u_p:&\Delta\subseteq \R\rightarrow\R_+,&
	v_1,\ldots,v_q:&\Delta\subseteq \R\rightarrow\R_+.
\end{align*}
We assume that all moments used below are finite; explicitly,
\begin{align*}
 \int_\Delta |x|^r u_i(x)v_j(x)\,\mathrm d w(x)<\infty,
 && r\in\mathbb N_0,&& 1\le i\le p,&& 1\le j\le q.
\end{align*}
Define the vectors and the rank-one matrix of weights by
\(u  \coloneq(u_1,\ldots,u_p)\),
\( v\coloneq(v_1,\ldots,v_q)\),
\begin{align*}
\d \mu & \coloneq v^\top u\d w=\begin{bmatrix}
	v_1 u_1&\cdots& v_1 u_p\\
	\vdots&&\vdots\\
	v_qu_1&\cdots&v_qu_p
\end{bmatrix}\d w.
\end{align*}
Taking
\begin{align*}
A_{\vec{n},\vec{m}}(x) & \coloneq
\sum_{i=1}^p A_{\vec{n},\vec{m}}^{(i)}(x)u_i(x),
&
B_{\vec{{n}},\vec{{m}}}(x) & \coloneq \sum_{j=1}^q B_{\vec{{n}},\vec{{m}}}^{(j)}(x)v_j(x)
\end{align*}
we can rewrite \eqref{eq:OMtypeI} and \eqref{eq:OMtypeII} as:
\begin{align}
\label{MOI}
\int_\Delta x^k 
\sum_{i=1}^p A_{\vec{n},\vec{m}}^{(i)}(x)u_i(x)
v_j(x)\d w(x) & =0,&& j\in\{1,\ldots,q\},&& k\in\{0,\ldots,m_j-1\} , \\
\label{MOII}
\int_\Delta x^k
\sum_{j=1}^q B_{\vec{{n}},\vec{{m}}}^{(j)}(x)v_j(x)
u_i(x)\d w(x) & =0,&& i\in\{1,\ldots,p\},&& k\in\{0,\ldots,{n}_i-1\}.
\end{align}
For \(d\in\{p,q\}\), let
\begin{align*}
 \vec e_k^{\,(d)}
 :=(\underbrace{0,\ldots,0}_{k-1},1,
 \underbrace{0,\ldots,0}_{d-k})\in\mathbb R^d,
 && 1\le k\le d.
\end{align*}
When the dimension is determined by the surrounding multi-index, we write
simply \(\vec e_k\).

If \(u=(u_1,\ldots,u_p)\) and \(v=(v_1,\ldots,v_q)\) are AT
systems on \(\Delta\), every admissible multi-index is normal
\cite{Ulises-Sergio-Judith}. Thus the corresponding mixed forms are unique
up to a nonzero factor, and every component with positive upper degree has
maximal degree:
\begin{align*}
 \deg A_{\vec n,\vec m}^{(i)}=n_i-1,&&
 \deg B_{\vec n,\vec m}^{(j)}=m_j-1
\end{align*}
whenever \(n_i>0\) and \(m_j>0\), respectively.

The next proposition records the biorthogonality relation used below to
isolate the recurrence coefficients.
\begin{pro}[Biorthogonality]
\label{prop:biorthogonality}
Assume that \(u=(u_1,\ldots,u_p)\) and \(v=(v_1,\ldots,v_q)\) are AT
systems on \(\Delta\). Let \(A_{\vec n,\vec m}\) be a type-I form, so
\(|\vec n|=|\vec m|+1\), and let
\(B_{\vec{\mathfrak n},\vec{\mathfrak m}}\) be a type-II form, so
\(|\vec{\mathfrak m}|=|\vec{\mathfrak n}|+1\). Then
\begin{align}
	\label{MixedBiorthogonality}
	\int_\Delta
	A_{\vec n,\vec m}(x)
	B_{\vec{\mathfrak n},\vec{\mathfrak m}}(x)\,\d w(x)
	=0
	&&\text{whenever}&&
	\vec n\leq\vec{\mathfrak n}
	&&\text{or}&&
	\vec{\mathfrak m}\leq\vec m.
\end{align}
Moreover, 
this integral
 is nonzero in either adjacent case
\begin{align}
\label{eq:bio-adjacent}
\vec{\mathfrak m}=\vec m+\vec e_{j_0}
&&\text{or}&&
\vec n=\vec{\mathfrak n}+\vec e_{i_0}.
\end{align}
\end{pro}

\begin{proof}
Write
\begin{align*}
 A_{\vec n,\vec m}^{(i)}(x)
 =\sum_{\ell=0}^{n_i-1}A_{\vec n,\vec m}^{(i)}[\ell]x^\ell .
\end{align*}
If \(\vec n\leq\vec{\mathfrak n}\), then
\begin{align*}
 \int_\Delta A_{\vec n,\vec m}(x)
 B_{\vec{\mathfrak n},\vec{\mathfrak m}}(x)\,\d w(x)
 =\sum_{i=1}^p\sum_{\ell=0}^{n_i-1}
 A_{\vec n,\vec m}^{(i)}[\ell]
 \int_\Delta x^\ell u_i(x)
 B_{\vec{\mathfrak n},\vec{\mathfrak m}}(x)\,\d w(x)=0
\end{align*}
by the type-II conditions. Likewise, expanding
\begin{align*}
 B_{\vec{\mathfrak n},\vec{\mathfrak m}}^{(j)}(x)
 =\sum_{\ell=0}^{\mathfrak m_j-1}
 B_{\vec{\mathfrak n},\vec{\mathfrak m}}^{(j)}[\ell]x^\ell
\end{align*}
and using the type-I conditions gives
\begin{align*}
 \int_\Delta A_{\vec n,\vec m}(x)
 B_{\vec{\mathfrak n},\vec{\mathfrak m}}(x)\,\d w(x)=0
 &&\text{if}&&
 \vec{\mathfrak m}\leq\vec m.
\end{align*}

Suppose now that
\(\vec{\mathfrak m}=\vec m+\vec e_{j_0}\). The type-I conditions annihilate
every term except the leading term of the \(j_0\)-component:
\begin{align*}
 \int_\Delta A_{\vec n,\vec m}(x)
 B_{\vec{\mathfrak n},\vec{\mathfrak m}}(x)\,\d w(x)
 =B_{\vec{\mathfrak n},\vec{\mathfrak m}}^{(j_0)}[m_{j_0}]
 \int_\Delta x^{m_{j_0}}v_{j_0}(x)
 A_{\vec n,\vec m}(x)\,\d w(x).
\end{align*}
The coefficient
\(B_{\vec{\mathfrak n},\vec{\mathfrak m}}^{(j_0)}[m_{j_0}]\)
is nonzero by normality. Choose \(i_0\) with \(n_{i_0}>0\). If the remaining
moment vanished, then \(A_{\vec n,\vec m}\), regarded as an element of the
larger polynomial space corresponding to
\( (\vec n+\vec e_{i_0},\vec m+\vec e_{j_0}) \),
would satisfy all the type-I conditions for this enlarged index. This
contradicts normality: its \(i_0\)-component has degree at most
\(n_{i_0}-1\), whereas every nonzero form for the enlarged index has
\(i_0\)-component of degree~\(n_{i_0}\).

Finally, suppose that
\(\vec n=\vec{\mathfrak n}+\vec e_{i_0}\). The type-II conditions now give
\begin{align*}
 \int_\Delta A_{\vec n,\vec m}(x)
 B_{\vec{\mathfrak n},\vec{\mathfrak m}}(x)\,\d w(x)
 =A_{\vec n,\vec m}^{(i_0)}[\mathfrak n_{i_0}]
 \int_\Delta x^{\mathfrak n_{i_0}}u_{i_0}(x)
 B_{\vec{\mathfrak n},\vec{\mathfrak m}}(x)\,\d w(x).
\end{align*}
The displayed coefficient is nonzero by normality. If the remaining moment
vanished, then, for any \(j_0\), the original form
\(B_{\vec{\mathfrak n},\vec{\mathfrak m}}\) would satisfy all the type-II
conditions for
\( (\vec{\mathfrak n}+\vec e_{i_0},
 \vec{\mathfrak m}+\vec e_{j_0}) \),
while its \(j_0\)-component would still have degree at most
\(\mathfrak m_{j_0}-1\). Normality of the enlarged index requires that
degree to be \(\mathfrak m_{j_0}\), which is again a contradiction.
\end{proof}

\section{Recurrence relations}

Consider the vectors
\(\vec{1}_p\coloneq(
{1,\ldots,1}
)\in\R^p\),
\( \vec{0}_p\coloneq(
{0,\ldots,0}
)\in\R^p\).
Let \(\pi_s\) and \(\pi_{\mathfrak s}\) be permutations of
\(\{1,\ldots,p\}\), and let \(\pi_t\) and \(\pi_{\mathfrak t}\) be
permutations of \(\{1,\ldots,q\}\). Define
\begin{align}
	&\vec{s}_{rp+i}\coloneq r\vec{1}_p+\sum_{k=1}^i\vec{e}_{\pi_s(k)}, &&
	\vec{\mathfrak{s}}_{rp+i}\coloneq r\vec{1}_p+\sum_{k=1}^i\vec{e}_{\pi_\mathfrak{s}(k)}, && r\in\N_0, && i\in\{0,\ldots,p-1\},\\
	&\vec{t}_{rq+j}\coloneq r\vec{1}_q+\sum_{k=1}^j\vec{e}_{\pi_t(k)}, &&
	\vec{\mathfrak{t}}_{rq+j}\coloneq r\vec{1}_q+\sum_{k=1}^j\vec{e}_{\pi_\mathfrak{t}(k)}, && r\in\N_0, && j\in\{0,\ldots,q-1\}.
\end{align}

The vectors \(\vec{s}_{i},\vec{\mathfrak{s}}_i\) belong to \(\N_0^p\),
whereas \(\vec{t}_{j},\vec{\mathfrak{t}}_j\) belong to \(\N_0^q\). They define chains starting at
\(\vec{s}_0=\vec{\mathfrak{s}}_0=\vec{0}_p\),
\(\vec{t}_0=\vec{\mathfrak{t}}_0=\vec{0}_q\).
At each step the modulus increases by one, with the component order determined
by \(\pi_s\),~\(\pi_{\mathfrak s}\),~\(\pi_t\), and
\(\pi_{\mathfrak t}\).

The multi-index \(\vec n\pm\vec s_{rp+i}\) is obtained by first adding or
subtracting \(r\vec 1_p\) and then changing by \(1\) the~\(i\)~entries
\(	\{{n}_{\pi_s(1)},{n}_{\pi_s(2)},\ldots,{n}_{\pi_s(i)}\} \).
Similarly, for \( \vec{n}\pm\vec{\mathfrak{s}}_{rp+i}\) with \(\pi_{\mathfrak{s}}\).
Consequently,
\(	|\vec{n}\pm\vec{s}_{i}|=|\vec{n}\pm\vec{\mathfrak{s}}_{i}|=|\vec{n}|\pm i\).
In the same way,
 \(\vec m\pm\vec t_{rq+j}\) is obtained by changing the \(j\)
entries
\(	\{{m}_{\pi_t(1)},{m}_{\pi_t(2)},\ldots,{m}_{\pi_t(j)}\}\).
Furthermore, for \( \vec{m}\pm\vec{\mathfrak{t}}_{rq+j}\) with \(\pi_{\mathfrak{t}}\). Consequently,
\(	|\vec{m}\pm\vec{t}_{j}|=|\vec{m}\pm\vec{\mathfrak{t}}_{j}|=|\vec{m}|\pm j \).

The four admissible index paths introduced above lead to the following mixed
recurrence relations.
\begin{pro}
Assume that \(u=(u_1,\ldots,u_p)\) and \(v=(v_1,\ldots,v_q)\) are AT
systems on \(\Delta\).
Assume in this proposition that all components of \(\vec n\) and \(\vec m\)
are positive and that every shifted multi-index displayed below is
nonnegative.
Fix an arbitrary nonzero
normalization for every form on the four chains,
then the linear forms \eqref{MOII} and \eqref{MOI} satisfy respective recurrence relations
\begin{align}
	\label{MixedRII}
	x B_{\vec{n},\vec{m}}(x)=\sum_{j=1}^q b^j_{\vec{n},\vec{m}}\, B_{\vec{n}+\vec{\mathfrak{s}}_j,\vec{m}+\vec{t}_j}(x)+b^0_{\vec{n},\vec{m}}\,B_{\vec{n},\vec{m}}(x)+\sum_{i=1}^p b^{-i}_{\vec{n},\vec{m}}\, B_{\vec{n}-\vec{s}_i,\vec{m}-\vec{\mathfrak{t}}_i}(x) , \\
	\label{MixedRI}
	x A_{\vec{n},\vec{m}}(x)=\sum_{j=1}^q a^{-j}_{\vec{n},\vec{m}}\, A_{\vec{n}-\vec{\mathfrak{s}}_j,\vec{m}-\vec{t}_j}(x)+a^0_{\vec{n},\vec{m}}\,A_{\vec{n},\vec{m}}(x)+\sum_{i=1}^p a^{i}_{\vec{n},\vec{m}}\, A_{\vec{n}+\vec{s}_i,\vec{m}+\vec{\mathfrak{t}}_i}(x) ,
\end{align}
where
\begin{align}
	\label{eq:anversusbn}
\left\{\begin{aligned}
a^{-j}_{\vec{n},\vec{m}}&=b^{j}_{\vec{n}-\vec{\mathfrak{s}}_{j+1},\vec{m}-\vec{t}_{j-1}}\,
{
	\dfrac{\int_\Delta B_{\vec{n}-\vec{\mathfrak{s}}_{j+1}+\vec{\mathfrak{s}}_j,\vec{m}-\vec{t}_{j-1}+\vec{t}_j}(x) A_{\vec{n},\vec{m}}(x) \d w(x)}{\int_\Delta B_{\vec{n}-\vec{\mathfrak{s}}_{j+1},\vec{m}-\vec{t}_{j-1}}(x) A_{\vec{n}-\vec{\mathfrak{s}}_j,\vec{m}-\vec{t}_j}(x) \d w(x)}},\quad j\in\{1,\ldots,q\},\\[4pt]
a^0_{\vec{n},\vec{m}}&=b^0_{\vec{n}-\vec{\mathfrak{s}}_1,\vec{m}+\vec{\mathfrak{t}}_1},\\[4pt]
a^{i}_{\vec{n},\vec{m}}&=b^{-i}_{\vec{n}+\vec{s}_{i-1},\vec{m}+\vec{\mathfrak{t}}_{i+1}}\,
{
	\dfrac{\int_\Delta B_{\vec{n}-\vec{s}_{i}+\vec{s}_{i-1},\vec{m}-\vec{\mathfrak{t}}_{i}+\vec{\mathfrak{t}}_{i+1}}(x) A_{\vec{n},\vec{m}}(x) \d w(x)}{\int_\Delta B_{\vec{n}+\vec{s}_{i-1},\vec{m}+\vec{\mathfrak{t}}_{i+1}}(x) A_{\vec{n}+\vec{s}_i,\vec{m}+\vec{\mathfrak{t}}_i}(x) \d w(x)}},\quad i\in\{1,\ldots,p\},
\end{aligned}\right.
\end{align}
and
\begin{align}
	\label{MixedRecurrence}
\left\{\begin{aligned}
b^{j}_{\vec{n},\vec{m}}&=
\dfrac{\int_\Delta x\, A_{\vec{n}+\vec{\mathfrak{s}}_{j+1},\vec{m}+\vec{t}_{j-1}}(x) B_{\vec{n},\vec{m}}(x) \d w(x)}{
	\int_\Delta A_{\vec{n}+\vec{\mathfrak{s}}_{j+1},\vec{m}+\vec{t}_{j-1}}(x) B_{\vec{n}+\vec{\mathfrak{s}}_j,\vec{m}+\vec{t}_j}(x) \d w (x)},\quad j\in\{1,\ldots,q\},\\[4pt]
b^{0}_{\vec{n},\vec{m}}&=
\dfrac{\int_\Delta x\, A_{\vec{n}+\vec{\mathfrak{s}}_{1},\vec{m}-\vec{\mathfrak{t}}_{1}}(x) B_{\vec{n},\vec{m}}(x) \d w(x)}{\int_\Delta A_{\vec{n}+\vec{\mathfrak{s}}_{1},\vec{m}-\vec{\mathfrak{t}}_{1}}(x) B_{\vec{n},\vec{m}}(x) \d w(x)},\\[4pt]
b^{-i}_{\vec{n},\vec{m}}&=
\dfrac{\int_\Delta x\, A_{\vec{n}-\vec{s}_{i-1},\vec{m}-\vec{\mathfrak{t}}_{i+1}}(x) B_{\vec{n},\vec{m}}(x) \d w(x)}{\int_\Delta A_{\vec{n}-\vec{s}_{i-1},\vec{m}-\vec{\mathfrak{t}}_{i+1}}(x) B_{\vec{n}-\vec{s}_i,\vec{m}-\vec{\mathfrak{t}}_i}(x) \d w(x)},\quad i\in\{1,\ldots,p\}.
\end{aligned}\right.
\end{align}
\end{pro}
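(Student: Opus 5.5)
We prove the type-II relation \eqref{MixedRII} first by a dimension count and expansion in a biorthogonal basis, and then derive \eqref{MixedRI} and \eqref{eq:anversusbn} by pairing against type-II forms.

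\textbf{Step 1: $xB_{\vec n,\vec m}$ lies in the span of the listed forms.} The function $xB_{\vec n,\vec m}(x)=\sum_j xB^{(j)}_{\vec n,\vec m}(x)v_j(x)$ has $j$-th component of degree at most $m_j$. So it lies in the space $V$ of linear forms $\sum_j P_jv_j$ with $\deg P_j\le m_j$, that is, with index $\vec m+\vec 1_q$ and dimension $|\vec m|+q$. Moreover, $\int x^k u_i\,xB_{\vec n,\vec m}\,\d w=0$ for $0\le k\le n_i-2$. The $p+1+q$ forms on the right of \eqref{MixedRII} have second index bounded by $\vec m+\vec t_q=\vec m+\vec 1_q$, so they lie in $V$. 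They also satisfy the type-II conditions for first index $\vec n-\vec 1_p=\vec n-\vec s_p$, since every first index appearing is $\ge\vec n-\vec s_p$. The forms in $V$ that satisfy these $|\vec n|-p$ conditions make up a subspace $W$ of dimension at least $|\vec m|+q-|\vec n|+p=p+q+1$, because $|\vec m|=|\vec n|+1$.

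\textbf{Step 2: the listed forms are a basis of $W$, and the coefficients follow.} We show the $p+q+1$ forms are linearly independent by pairing them against a dual family of type-I forms, which yields a triangular nonsingular pairing matrix. The dual forms are the type-I forms appearing in the numerators of \eqref{MixedRecurrence}:
\begin{align*}
A_{\vec n+\vec{\mathfrak s}_{j+1},\vec m+\vec t_{j-1}},\qquad A_{\vec n+\vec{\mathfrak s}_1,\vec m-\vec{\mathfrak t}_1},\qquad A_{\vec n-\vec s_{i-1},\vec m-\vec{\mathfrak t}_{i+1}}.
\end{align*}
Proposition \ref{prop:biorthogonality} (Biorthogonality) shows that each dual form annihilates every listed $B$ except the intended one, together with some on one side. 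It also shows that the pairing with the intended one is nonzero, because those indices are adjacent in the sense of \eqref{eq:bio-adjacent}. Hence the forms are independent and $\dim W=p+q+1$. Since $xB_{\vec n,\vec m}\in W$, the expansion \eqref{MixedRII} exists and is unique.

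To extract the coefficients, we multiply \eqref{MixedRII} by the dual form $A$ attached to a given term and integrate. By the triangularity, exactly one term survives on the right, giving the quotients \eqref{MixedRecurrence}. For instance, for $b^j$ the form $A_{\vec n+\vec{\mathfrak s}_{j+1},\vec m+\vec t_{j-1}}$ kills the following:
\begin{itemize}
\item $B_{\vec n+\vec{\mathfrak s}_k,\vec m+\vec t_k}$ with $k\le j$, through the first-index condition $\vec n+\vec{\mathfrak s}_{j+1}\ldots$; care is needed here;
\item those with $k<j$ and the lower forms, through the second-index condition $\vec{\mathfrak m}\le\vec m+\vec t_{j-1}$.
\end{itemize}
The zero pattern must be checked so that exactly one term survives. \textbf{This is the main obstacle:} verifying, for each of the three families, which terms vanish via $\vec n\le\vec{\mathfrak n}$ and which via $\vec{\mathfrak m}\le\vec m$. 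This is tedious bookkeeping with the permutation chains $\pi_s,\pi_{\mathfrak s},\pi_t,\pi_{\mathfrak t}$, using that $\vec s_k$ increases componentwise in $k$. The dual indices are chosen precisely to make exactly one term survive. If some term does not vanish, the fix is triangularity rather than diagonality: the coefficients are then determined recursively. The clean formulas in \eqref{MixedRecurrence} suggest, however, that exact duality holds.

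\textbf{Step 3: the type-I relation \eqref{MixedRI} and the link \eqref{eq:anversusbn}.} The type-I case is entirely symmetric. $xA_{\vec n,\vec m}$ has components of degree at most $n_i$ and satisfies the type-I conditions for $\vec m-\vec 1_q$. The same count, $|\vec n|+p-(|\vec m|-q)=p+q+1$, together with independence via dual type-II forms, gives \eqref{MixedRI}.

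For \eqref{eq:anversusbn}, we compute $\int xA_{\vec n,\vec m}B\,\d w$ in two ways, with the appropriate type-II form $B$. Expanding $xA$ by \eqref{MixedRI} isolates $a^{\pm}$ times a single pairing. Expanding $xB$ by \eqref{MixedRII} isolates the corresponding $b$ coefficient times a single pairing. Taking the ratio gives \eqref{eq:anversusbn}. For example, choosing $B=B_{\vec n-\vec{\mathfrak s}_1,\vec m+\vec{\mathfrak t}_1}$ makes both pairings equal to $\int A_{\vec n,\vec m}B\,\d w$, which yields $a^0=b^0$ at the shifted index. The denominators are nonzero by the adjacency part of Proposition \ref{prop:biorthogonality}.
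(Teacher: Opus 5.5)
Your overall route matches the paper's: a dimension count, then the $p+q+1$ listed forms as a basis certified by pairing with dual type-I forms, then extraction of the coefficients by those pairings, and finally duality for type I. The dimension step, however, contains a \emph{non sequitur}. Counting unknowns against conditions only gives $\dim W\ge p+q+1$. Exhibiting $p+q+1$ independent elements of $W$ gives the same lower bound again, so it does not yield $\dim W=p+q+1$. The upper bound is exactly what forces $xB_{\vec n,\vec m}$ into the span of the listed forms, so existence of \eqref{MixedRII} is not yet proved; your type-I count in Step 3 has the same problem. What is needed is that the $|\vec n|-p$ conditions $\int_\Delta x^kFu_i\,\d w=0$, $0\le k\le n_i-2$, are linearly independent on the coefficient space $\{\deg P_j\le m_j\}$. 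The paper gets this from perfectness of the AT pair, i.e.\ full row rank of $[\int_\Delta x^{k+\ell}u_iv_j\,\d w]$. Alternatively, choose $\vec m''\le\vec m+\vec 1_q$ with $|\vec m''|=|\vec n|-p+1$ and use normality of the type-II index $(\vec n-\vec 1_p,\vec m'')$. Its one-dimensional solution space forces the corresponding column-submatrix, and hence the whole condition matrix, to have rank $|\vec n|-p$.

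The vanishing pattern, which you flag as the main obstacle and leave open, is precisely what produces the quotients \eqref{MixedRecurrence}, and your sketch for $b^j$ has it backwards. The forward forms with $k\le j$ are not killed through the first index. All four chains are componentwise nondecreasing, so pairing with $A_{\vec n+\vec{\mathfrak s}_{j+1},\vec m+\vec t_{j-1}}$ kills the forward forms with $k\ge j+1$ via $\vec n+\vec{\mathfrak s}_{j+1}\le\vec n+\vec{\mathfrak s}_k$. It kills those with $k\le j-1$, the central form, and all backward forms via $\vec m+\vec t_k,\ \vec m,\ \vec m-\vec{\mathfrak t}_i\le\vec m+\vec t_{j-1}$. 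The survivor $k=j$ is adjacent, since $\vec m+\vec t_j=\vec m+\vec t_{j-1}+\vec e_{\pi_t(j)}$. With $A_{\vec n+\vec{\mathfrak s}_1,\vec m-\vec{\mathfrak t}_1}$, the forward forms die by the first index and the backward ones by the second. With $A_{\vec n-\vec s_{i-1},\vec m-\vec{\mathfrak t}_{i+1}}$, the forward, central and backward forms with $i'\le i-1$ die by the first index, and those with $i'\ge i+1$ by the second. So the pairing matrix is exactly diagonal. Your fallback to a merely triangular system would not yield the stated formulas, and it is not needed. Your two-way evaluation of $\int_\Delta xA_{\vec n,\vec m}B\,\d w$ for \eqref{eq:anversusbn} is sound. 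The $a^0$ case is correct as written, and $a^{-j}$, $a^{i}$ work the same way using $B_{\vec n-\vec{\mathfrak s}_{j+1},\vec m-\vec t_{j-1}}$ and $B_{\vec n+\vec s_{i-1},\vec m+\vec{\mathfrak t}_{i+1}}$.
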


\begin{proof}
\emph{Existence and uniqueness.}
Let \(\mathcal V\) be the space of forms
\(F=\sum_{j=1}^qP_jv_j\) with \(\deg P_j\le m_j\) that satisfy
\begin{align*}
 \int_\Delta x^kF(x)u_i(x)\,\d w(x)=0,
 \qquad 0\le k\le n_i-2,\quad 1\le i\le p,
\end{align*}
with an empty range when \(n_i=1\). Write
\(P_j(x)=\sum_{\ell=0}^{m_j}c_{j,\ell}x^\ell\). The coefficient matrix of
the displayed conditions is
\begin{align*}
 \left[
 \int_\Delta x^{k+\ell}u_i(x)v_j(x)\,\d w(x)
 \right]_{\substack{1\leq i\leq p,\ 0\leq k\leq n_i-2\\
 1\leq j\leq q,\ 0\leq\ell\leq m_j}} .
\end{align*}
The perfectness theorem for the two AT systems gives full row rank for this
matrix \cite{Ulises-Sergio-Judith}. Hence the
\(|\vec n|-p\) displayed conditions are independent. The coefficient space
before imposing them has dimension
\begin{align*}
 \sum_{j=1}^q(m_j+1)=|\vec m|+q=|\vec n|+q+1.
\end{align*}
Consequently,
\begin{align*}
 \dim\mathcal V=(|\vec n|+q+1)-(|\vec n|-p)=p+q+1.
\end{align*}

Consider the following \(p+q+1\) forms:
\begin{align*}
\mathcal B_{\vec n,\vec m}:={}&
\big\{B_{\vec n+\vec{\mathfrak s}_j,\vec m+\vec t_j}:
1\le j\le q\big\} \cup \{B_{\vec n,\vec m}\}
\cup\big\{B_{\vec n-\vec s_i,\vec m-\vec{\mathfrak t}_i}:
1\le i\le p\big\}.
\end{align*}
Every form in this collection belongs to \(\mathcal V\). For a forward
form, the degree increase in \(\vec m+\vec t_j\) is at most one in each
component, while its type-II orthogonality conditions contain those
defining \(\mathcal V\). The central form satisfies stronger degree bounds,
and the backward forms satisfy the required moment conditions because the
corresponding components of \(\vec n-\vec s_i\) have decreased by at most
one.

The collection is linearly independent. Indeed, pair a linear combination
of its elements successively~with
\( A_{\vec n+\vec{\mathfrak s}_{j+1},\vec m+\vec t_{j-1}}\),
\( 1\le j\le q\),
then with \(A_{\vec n+\vec{\mathfrak s}_1,\vec m-\vec{\mathfrak t}_1}\),
and finally with
\( A_{\vec n-\vec s_{i-1},\vec m-\vec{\mathfrak t}_{i+1}}\),
\( 1\le i\le p\).
The biorthogonality relations annihilate all terms except the corresponding
forward, central, or backward pivot. The precise vanishing relations are
\eqref{eq:vanishuno}--\eqref{eq:vanishcuatro} below, and every pivot is
nonzero by Proposition~\ref{prop:biorthogonality}. Thus
\(\mathcal B_{\vec n,\vec m}\) is a basis of \(\mathcal V\).

Finally, \(xB_{\vec n,\vec m}\in\mathcal V\): its component degrees are at
most \(m_j\), and multiplication by \(x\) shifts every type-II moment
condition by one. Therefore it has a unique expansion in the preceding
basis,
\begin{align}
\label{eq:recurrencetypeII}
x B_{\vec{n},\vec{m}}(x) = \sum_{j=1}^q b^j_{\vec{n},\vec{m}}\,
B_{\vec{n}+\vec{\mathfrak{s}}_j,\vec{m}+\vec{t}_j}(x)
b^0_{\vec{n},\vec{m}}B_{\vec{n},\vec{m}}(x)
+\sum_{i=1}^p b^{-i}_{\vec{n},\vec{m}}\,
B_{\vec{n}-\vec{s}_i,\vec{m}-\vec{\mathfrak{t}}_i}(x).
\end{align}
This proves existence and uniqueness of the type-II recurrence. The type-I
recurrence~\eqref{MixedRI} follows by the same argument with the two families interchanged.

\medskip
\noindent\emph{Forward coefficients.}
Fix \(j_0\in\{1,\ldots,q\}\). We have
\begin{align*}
\int_\Delta A_{\vec{n}+\vec{\mathfrak{s}}_{j_0+1},\vec{m}+\vec{t}_{j_0-1}}(x) B_{\vec{n}+\vec{\mathfrak{s}}_j,\vec{m}+\vec{t}_j}(x)\d w(x) = 0, && j \neq j_0 ,
\end{align*}
taking into account
\begin{align}
\label{eq:vanishuno}
\vec{n}+\vec{\mathfrak{s}}_{j_0+1} \leq \vec{n}+\vec{\mathfrak{s}}_{j}, && j \in \{j_0 +1, \ldots, q\}, &&
 \vec{m}+\vec{t}_{j}\leq \vec{m}+\vec{t}_{j_0-1}, && j \in \{1, \ldots, j_0-1\} ,
\end{align}
and the biorthogonal relations \eqref{MixedBiorthogonality}.
Noticing that \( \vec{m}-\vec{\mathfrak{t}}_i \leq \vec{m}+\vec{t}_{j_0-1} \) and again using the biorthogonal relations~\eqref{MixedBiorthogonality}, it holds
\begin{align}
\label{eq:vanishdois}
\int_\Delta A_{\vec{n}+\vec{\mathfrak{s}}_{j_0+1},\vec{m}+\vec{t}_{j_0-1}}(x) B_{\vec{n}-\vec{s}_i,\vec{m}-\vec{\mathfrak{t}}_i}(x)\d w(x) = 0, && i >0 .
\end{align}
Moreover,
using \eqref{MixedBiorthogonality}
\begin{align*}
\int_\Delta A_{\vec{n}+\vec{\mathfrak{s}}_{j_0+1},\vec{m}+\vec{t}_{j_0-1}}(x) B_{\vec{n}+\vec{\mathfrak{s}}_{j_0},\vec{m}+\vec{t}_{j_0}}(x)\d w(x) \ne 0 .
\end{align*}
Multiplying \eqref{eq:recurrencetypeII} by
\(A_{\vec n+\vec{\mathfrak s}_{j_0+1},\vec m+\vec t_{j_0-1}}\),
integrating, and using \eqref{eq:vanishuno} and \eqref{eq:vanishdois}
gives the formula for \(b^{j_0}_{\vec n,\vec m}\).

\medskip
\noindent\emph{Diagonal coefficient.}
For the diagonal coefficient, pair \eqref{eq:recurrencetypeII} with
\(A_{\vec n+\vec{\mathfrak s}_1,\vec m-\vec{\mathfrak t}_1}\).
Part (a) of Proposition~\ref{prop:biorthogonality} annihilates every forward
term, and part (b) annihilates every backward term. The remaining pairing
with \(B_{\vec n,\vec m}\) is nonzero by
\eqref{eq:bio-adjacent}. Division by this pairing gives the second line of
\eqref{MixedRecurrence}.

\medskip
\noindent\emph{Backward coefficients.}
Fix \(i_0\in\{1,\ldots,p\}\). We have
\begin{align*}
\int_\Delta A_{\vec{n}-\vec{s}_{i_0-1},\vec{m}-\vec{\mathfrak{t}}_{i_0+1}}(x) B_{\vec{n}-\vec{s}_i,\vec{m}-\vec{\mathfrak{t}}_i}(x) \d w(x) = 0, && i \ne i_0,
\end{align*}
taking also into account
\begin{align}
\label{eq:vanishtres}
\vec{n}-\vec{s}_{i_0-1} \leq \vec{n}-\vec{s}_i, && i \in \{1, \ldots, i_0-1\}, &&
 \vec{m}-\vec{\mathfrak{t}}_i \leq \vec{m}-\vec{\mathfrak{t}}_{i_0+1}, && i \geq i_0+1.
\end{align}
and the biorthogonal relations \eqref{MixedBiorthogonality}.
Noticing that \( \vec{n}-\vec{s}_{i_0-1} \leq \vec{n}+\vec{\mathfrak{s}}_j \) and again using the biorthogonal relations~\eqref{MixedBiorthogonality},
it holds
\begin{align}
\label{eq:vanishcuatro}
\int_\Delta A_{\vec{n}-\vec{s}_{i_0-1},\vec{m}-\vec{\mathfrak{t}}_{i_0+1}}(x) B_{\vec{n}+\vec{\mathfrak{s}}_j,\vec{m}+\vec{t}_j}(x)\d w(x) = 0, && j \in \{1,\ldots,q\}
\end{align}
Moreover,
\begin{align*}
\int_\Delta A_{\vec{n}-\vec{s}_{i_0-1},\vec{m}-\vec{\mathfrak{t}}_{i_0+1}}(x) B_{\vec{n}-\vec{s}_{i_0},\vec{m}-\vec{\mathfrak{t}}_{i_0}}(x) \d w(x) \ne 0 
 .
\end{align*}
Multiplying \eqref{eq:recurrencetypeII} by
\(A_{\vec n-\vec s_{i_0-1},\vec m-\vec{\mathfrak t}_{i_0+1}}\),
integrating, and using \eqref{eq:vanishtres} and
\eqref{eq:vanishcuatro} gives the formula for
\(b^{-i_0}_{\vec n,\vec m}\) in \eqref{MixedRecurrence}.

The type-I coefficients follow by duality. Substituting the shifted
type-II coefficient formulas gives \eqref{eq:anversusbn}.
\end{proof}

Write the polynomial components of the mixed forms as
\begin{align*}
A^{(i)}_{\vec{n},\vec{m}}(x)=\sum_{l=0}^{n_i-1}	A^{(i)}_{\vec{n},\vec{m}}[l]\,x^l,\quad i\in\{1,\ldots,p\}, &&
B^{(j)}_{\vec{\mathfrak{n}},\vec{\mathfrak{m}}}(x)=\sum_{l=0}^{\mathfrak{m}_j-1}	B^{(j)}_{\vec{\mathfrak{n}},\vec{\mathfrak{m}}}[l]\,x^l,\quad j\in\{1,\ldots,q\}.
\end{align*}
For later boundary formulas, set \(A^{(i)}_{\vec n,\vec m}[l]=0\) and
\(B^{(j)}_{\vec n,\vec m}[l]=0\) whenever \(l\) lies outside the indicated
coefficient~range.

Using these coefficients and the orthogonality conditions
\eqref{MOII}--\eqref{MOI}, we can rewrite the recurrence coefficients in
\eqref{MixedRecurrence} as follows:
\begin{align}\label{ReMixedRecurrencej}
&	\begin{aligned}
	b^{j}_{\vec{n},\vec{m}}
	&=\dfrac{\int_\Delta x\, A_{\vec{n}+\vec{\mathfrak{s}}_{j+1},\vec{m}+\vec{t}_{j-1}}(x) B_{\vec{n},\vec{m}}(x) \d w(x)}{\int_\Delta A_{\vec{n}+\vec{\mathfrak{s}}_{j+1},\vec{m}+\vec{t}_{j-1}}(x) B_{\vec{n}+\vec{\mathfrak{s}}_j,\vec{m}+\vec{t}_j}(x) \d w(x)}\\
&	=\sum_{k=j}^q\dfrac{B^{(\pi_t(k))}_{\vec{n},\vec{m}}[m_{\pi_t(k)}-1]\int_\Delta x^{m_{\pi_t(k)}}A_{\vec{n}+\vec{\mathfrak{s}}_{j+1},\vec{m}+\vec{t}_{j-1}}(x)v_{\pi_t(k)}(x)\d w(x)}
	{B^{(\pi_t(j))}_{\vec{n}+\vec{\mathfrak{s}}_j,\vec{m}+\vec{t}_j}[m_{\pi_t(j)}]\int_\Delta x^{m_{\pi_t(j)}}A_{\vec{n}+\vec{\mathfrak{s}}_{j+1},\vec{m}+\vec{t}_{j-1}}(x)v_{\pi_t(j)}(x)\d w(x)},
	\end{aligned}\\[4pt]	\label{ReMixedRecurrence0}
&	\begin{aligned}
	b^{0}_{\vec{n},\vec{m}}&=
	\dfrac{\int_\Delta x\, A_{\vec{n}+\vec{\mathfrak{s}}_{1},\vec{m}-\vec{\mathfrak{t}}_{1}}(x) B_{\vec{n},\vec{m}}(x) \d w(x)}{\int_\Delta A_{\vec{n}+\vec{\mathfrak{s}}_{1},\vec{m}-\vec{\mathfrak{t}}_{1}}(x) B_{\vec{n},\vec{m}}(x) \d w(x)}\\
	&=\sum_{k=1}^q \dfrac{B^{(k)}_{\vec{n},\vec{m}}[m_k-1]}
{B^{(\pi_{\mathfrak{t}}(1))}_{\vec{n},\vec{m}}[m_{\pi_{\mathfrak{t}}(1)}-1]}\,\dfrac{\int_{\Delta}x^{m_k}A_{\vec{n}+\vec{\mathfrak{s}}_1,\vec{m}-\vec{\mathfrak{t}}_1}(x)v_k(x)\d w(x)}{\int_\Delta x^{m_{\pi_\mathfrak{t}(1)}-1}A_{\vec{n}+\vec{\mathfrak{s}}_1,\vec{m}-\vec{\mathfrak{t}}_1}(x)v_{\pi_{\mathfrak{t}}(1)}(x)\d w(x)	}
	+\dfrac{B^{(\pi_{\mathfrak{t}}(1))}_{\vec{n},\vec{m}}[m_{\pi_{\mathfrak{t}}(1)}-2]}{B^{(\pi_{\mathfrak{t}}(1))}_{\vec{n},\vec{m}}[m_{\pi_{\mathfrak{t}}(1)}-1]}\\
	&=\sum_{k=1}^p \dfrac{A^{(k)}_{\vec{n}+\vec{\mathfrak{s}}_{1},\vec{m}-\vec{\mathfrak{t}}_{1}}[n_k-1]}
	{A^{(\pi_{\mathfrak{s}}(1))}_{\vec{n}+\vec{\mathfrak{s}}_{1},\vec{m}-\vec{\mathfrak{t}}_{1}}[n_{\pi_{\mathfrak{s}}(1)}]}\,\dfrac{\int_{\Delta}x^{n_k}B_{\vec{n},\vec{m}}(x)u_k(x)\d w(x)}{\int_\Delta x^{n_{\pi_\mathfrak{s}(1)}}B_{\vec{n},\vec{m}}(x)u_{\pi_{\mathfrak{s}}(1)}(x)\d w(x)	}
	+\dfrac{\int_\Delta x^{n_{\pi_\mathfrak{s}(1)}+1}B_{\vec{n},\vec{m}}(x)u_{\pi_{\mathfrak{s}}(1)}(x)\d w(x)}{\int_\Delta x^{n_{\pi_\mathfrak{s}(1)}}B_{\vec{n},\vec{m}}(x)u_{\pi_{\mathfrak{s}}(1)}(x)\d w(x)	},
	\end{aligned}\\[4pt]	\label{ReMixedRecurrencei}
&	\begin{aligned}
	b^{-i}_{\vec{n},\vec{m}}&=
	\dfrac{\int_\Delta x\, A_{\vec{n}-\vec{s}_{i-1},\vec{m}-\vec{\mathfrak{t}}_{i+1}}(x) B_{\vec{n},\vec{m}}(x) \d w(x)}{\int_\Delta A_{\vec{n}-\vec{s}_{i-1},\vec{m}-\vec{\mathfrak{t}}_{i+1}}(x) B_{\vec{n}-\vec{s}_i,\vec{m}-\vec{\mathfrak{t}}_i}(x) \d w(x)}\\
&	=\sum_{k=i}^p\dfrac{	A^{(\pi_s(k))}_{\vec{n}-\vec{s}_{i-1},\vec{m}-\vec{\mathfrak{t}}_{i+1}}[n_{\pi_s(k)}-1]\int_\Delta x^{n_{\pi_s(k)}}B_{\vec{n},\vec{m}}(x)u_{\pi_s(k)}(x)\d w(x)}{
	A^{(\pi_s(i))}_{\vec{n}-\vec{s}_{i-1},\vec{m}-\vec{\mathfrak{t}}_{i+1}}[n_{\pi_s(i)}-1]\int_\Delta x^{n_{\pi_s(i)}-1}B_{\vec{n}-\vec{s}_{i},\vec{m}-\vec{\mathfrak{t}}_{i}}(x)u_{\pi_s(i)}(x)\d w(x)}.
\end{aligned}
\end{align}

\section{The step line}

We say that a pair of multi-indices \((\vec n,\vec m)\) lies on the step line
if it has the form
\begin{align}
	\label{SL}
	\vec{n}&=(\underbrace{\mathfrak r+1,\ldots,\mathfrak r+1}_{\mathfrak l\text{ times}},\underbrace{\mathfrak r,\ldots,\mathfrak r}_{p-\mathfrak l\text{ times}}), && \mathfrak r\in\N_0, && \mathfrak l\in\{0,\ldots,p-1\},\\
	\vec{m}&=(\underbrace{r+1,\ldots,r+1}_{l\text{ times}},\underbrace{r,\ldots,r}_{q-l\text{ times}}), && r\in\N_0, && l\in\{0,\ldots,q-1\}
	.
\end{align}
Thus \(|\vec{n}|=p\mathfrak r+\mathfrak l\) and
\(|\vec{m}|=qr+l\). Equivalently, their components are
\begin{align*}
n_i  = \left\lceil \frac{|\vec{n}|+1-i}{p}\right\rceil
&& \text{and} &&
m_j = \left\lceil \frac{|\vec{m}|+1-j}{q}\right\rceil .
\end{align*}
We use the following compatible normalizations.

If \(|\vec{m}|=|\vec{n}|+1\), write
\(|\vec n|=qr+l\), where \(l\in\{0,\ldots,q-1\}\), and normalize the type-II
polynomials in~\eqref{MOII}~by
\begin{align}
	\label{NormIISL}
	B^{(l+1)}_{\vec{n},\vec{m}}[r]=1.
\end{align}
Thus \(B^{(l+1)}_{\vec{n},\vec{m}}\) is monic of degree
\(r\).

If, on the other hand, \(|\vec{n}|=|\vec{m}|+1\), write
\(|\vec m|=qr+l\), where \(l\in\{0,\ldots,q-1\}\), and use the type-I
normalization
\begin{align}
	\label{NormISL}
		\int_{\Delta} x^r A_{\vec{n},\vec{m}}(x)v_{l+1}(x)\d w(x)=1.
\end{align}

The step-line multi-indices in \eqref{SL} can be ordered so that each modulus
\(|\vec n|=p\mathfrak r+\mathfrak l\) and
\(|\vec m|=qr+l\) increases by one along its corresponding sequence.
We may therefore relabel the polynomials by the modulus of the corresponding multi-index.
Thus, we can relabel the polynomials \eqref{MOII} as
\begin{align*}
	B_n^{(j)}= B_{p\mathfrak r + \mathfrak l}^{(j)}&\coloneq B^{(j)}_{\vec{n},\vec{m}},&& j\in\{1,\ldots,q\},&& n\in\N_0,
\end{align*}
where \(\vec n\in\N_0^p\) and \(\vec m\in\N_0^q\) are the corresponding
step-line multi-indices satisfying
\(|\vec n|=n\),
\( |\vec m|=n+1\).
Similarly, we can relabel the polynomials \eqref{MOI} as
\begin{align*}
	A_m^{(i)}= A_{qr + l}^{(i)}&\coloneq A^{(i)}_{\vec{n},\vec{m}}, &&  i\in\{1,\ldots,p\}, && m\in\N_0,
\end{align*}
where \(\vec n\in\N_0^p\) and \(\vec m\in\N_0^q\) are the corresponding
step-line multi-indices satisfying
\(|\vec n|=m+1\),
\(|\vec m|=m\).
In~the same way, relabel the corresponding linear forms by
\begin{align}
	\label{LFSL}
	B_n(x)&=\sum_{j=1}^q B_n^{(j)}(x)v_j(x),
	&
	A_m(x)&=\sum_{i=1}^p A_m^{(i)}(x)u_i(x),
	& n,m\in\N_0.
\end{align}
Equation~\eqref{MixedBiorthogonality} gives the biorthogonality condition
\begin{align}
	\label{BOSL}
	\int_{\Delta} B_{n}(x)A_{m}(x)\d w(x)=\delta_{n,m}.
\end{align}
For the step-line multi-indices in \eqref{SL}, write
\begin{align*}
 n+1=qr_+ +l_+,
 && r_+\in\mathbb N_0,
 && l_+\in\{0,\ldots,q-1\}.
\end{align*}
The remainder \(l_+\), rather than \(l\), determines the two paths in the
\(q\)-component type-II multi-index, because that multi-index has modulus
\(n+1\). Consider the following permutations:
\begin{align}
	\pi_s=&\begin{bNiceMatrix}
		1 & 2 & \cdots & \mathfrak l & \mathfrak l+1 & \mathfrak l+2 & \cdots & p \\
		\mathfrak l & \mathfrak l-1 & \cdots & 1 & p & p-1 &\cdots & \mathfrak l+1
	\end{bNiceMatrix},\\ \pi_{\mathfrak s}=&\begin{bNiceMatrix}
	1 & 2 & \cdots & p-\mathfrak l & p-\mathfrak l+1 & p-\mathfrak l+2 & \cdots & p \\
	\mathfrak l+1 & \mathfrak l+2 & \cdots & p & 1 & 2 &\cdots & \mathfrak l
	\end{bNiceMatrix},\\
	\pi_t=&\begin{bNiceMatrix}
		1 & 2 & \cdots & q-l_+ & q-l_++1 & q-l_++2 & \cdots & q \\
		l_++1 & l_++2 & \cdots & q & 1 & 2 &\cdots & l_+
	\end{bNiceMatrix},\\ \pi_{\mathfrak t}=&\begin{bNiceMatrix}
	1 & 2 & \cdots & l_+ & l_++1 & l_++2 & \cdots & q \\
	l_+ & l_+-1 & \cdots & 1 & q & q-1 &\cdots & l_++1
	\end{bNiceMatrix}.
\end{align}
With these choices,
\(
(\vec n+\vec{\mathfrak s}_j,\vec m+\vec t_j)
\)
is the type-II step-line pair associated with \(B_{n+j}\), and
\(
(\vec n-\vec s_i,\vec m-\vec{\mathfrak t}_i)
\)
is the pair associated with \(B_{n-i}\) whenever it is componentwise
nonnegative. Empty blocks in the displayed permutations are omitted.

Using these quantities, we relabel the recurrence coefficients in
\eqref{MixedRII} as
\begin{align*}
	b^k_n=b^k_{p\mathfrak r+\mathfrak l}\coloneq b^k_{\vec{n},\vec{m}}, &&  k\in\{-p,\ldots,-1,0,1,\ldots,q\}, && n\in\N_0,
\end{align*}
where \(\vec n\in\N_0^p\) and \(\vec m\in\N_0^q\) are the corresponding
step-line multi-indices satisfying
\(|\vec n|=n\),
\( |\vec m|=n+1 \).
The~type-II recurrence \eqref{MixedRII} becomes
\begin{align}
	\label{MixedRIISL}
		x B_{n}(x)&=\sum_{j=1}^q b^j_{n}\, B_{n+j}(x)+b^0_{n}\,B_{n}(x)+\sum_{i=1}^p b^{-i}_{n}\, B_{n-i}(x), & &n\in\N_0.
		\end{align}
The analogous type-I recurrence is
\begin{align*}
		x A_{m}(x)&=\sum_{j=1}^q a_m^{-j}\, A_{m-j}(x)+a^0_{m}\,A_{m}(x)+\sum_{i=1}^p a^{i}_{m}\, A_{m+i}(x), & &m\in\N_0.
\end{align*}
Biorthogonality \eqref{BOSL} gives
\begin{align*}
a_m^{-j} = b^{j}_{m-j}, && a^0_{m} = b^0_{m}, && \text{and} && a^{i}_{m} = b^{-i}_{m+i}
\end{align*}
and therefore
\begin{align}
\label{MixedRISL}
x A_{m}(x)&=\sum_{j=1}^q b^{j}_{m-j}\, A_{m-j}(x)+b^0_{m}\,A_{m}(x)+\sum_{i=1}^p b^{-i}_{m+i}\, A_{m+i}(x),\quad &m\in\N_0.
\end{align}
Here \(A_n=B_n=0\) for every \(n<0\).

For \(0\le n<p\), set \(b_n^{-i}=0\) when \(i>n\). The proof of the preceding
proposition applies after the negative-index forms are omitted. Indeed, the
space determined by the same degree bounds and shifted moment conditions has
dimension \(q+1+n\), while
\( \{B_{n+1},\ldots,B_{n+q},B_n,B_{n-1},\ldots,B_0\} \)
consists of \(q+1+n\) biorthogonally independent elements. Since \(xB_n\)
belongs to this space, its expansion in that basis proves
\eqref{MixedRIISL} at the boundary. The type-I boundary recurrence follows
by duality.

Using \eqref{BOSL}, the recurrence coefficients, including the boundary
coefficients just defined, are
\begin{align}
	\label{MixedRecurrenceSL}
	b^{k}_{n}&
		={\int_\Delta x\, A_{n+{k}}(x) B_{n}(x) \d w(x)}, && k\in\{-p,\ldots,-1,0,1,\ldots,q\},
&& n\in\N_0.
\end{align}

With this notation, \eqref{MixedRIISL} and \eqref{MixedRISL} take the matrix
form
\begin{align*}
	T\begin{bNiceMatrix}
		B_0(x)\\B_1(x)\\
		B_2(x)\\
		 \Vdots
	\end{bNiceMatrix}&=x\begin{bNiceMatrix}
		B_0(x)\\B_1(x)\\
		B_2(x)\\
		\Vdots
	\end{bNiceMatrix},
	& &
	T^{\top}\begin{bNiceMatrix}
		A_0(x)\\
		A_1(x)\\
		A_2(x)\\
		\Vdots
	\end{bNiceMatrix}
	=x\begin{bNiceMatrix}
		A_0(x)\\
		A_1(x)\\
		A_2(x)\\
		\Vdots
	\end{bNiceMatrix}.
\end{align*}
where the banded matrix is
\begin{align}
	\label{T}
	T\coloneq
	\begin{bNiceMatrix}
		b^{0}_0 & b^{1}_0 & 	\Cdots[shorten-start=2pt,shorten-end=2pt]&
	 & 1 & 0 &
		\Cdots[shorten-start=-0em,shorten-end=-10pt] & & \phantom{0}\\
		b^{-1}_1 & b^{0}_1 & b^1_1 & & & &
		\Ddots[shorten-start=2pt,shorten-end=-15pt] & & \phantom{0}\\
		& b^{-1}_{2} & b^{0}_{2} &
		\Ddots[shorten-start=-0.10em,shorten-end=-20pt] & & &
		\Ddots[shorten-start=-0.10em,shorten-end=-17pt] & &\\
		\Vdots[shorten-start=2pt,shorten-end=2pt] & &
		\Ddots[shorten-start=-0.10em,shorten-end=-25pt] &
		\Ddots[shorten-start=5pt,shorten-end=-12pt] & & & & & \phantom{0}\\
		b^{-p}_p & & & & & & & & \phantom{1}\\
		0 & \Ddots[shorten-start=-0.10em,shorten-end=-25pt] & & & & & & & \phantom{0}\\
		&&&&&&&&\\
		\Vdots[shorten-start=-0.10em,shorten-end=-10pt] & &
		\Ddots[shorten-start=-0.15em,shorten-end=-10pt] & & & & & &
		\phantom{b^1_{m-2}}\\[4pt]
		\phantom{0} & \phantom{0} & & \phantom{0} &
		\phantom{b^{-p}_{m-1}} & \phantom{0} & &
		\phantom{b^{-1}_{m-1}} & \phantom{b^{0}_{m-1}}
	\end{bNiceMatrix}.
\end{align}
Notice that the normalization conditions \eqref{NormIISL}, \eqref{NormISL} imply that
\(b^q_n=1\),\(n\in\N_0\).

Equations~\eqref{BOSL} and \eqref{MixedRecurrenceSL} give the integral representation
\begin{align}
	\label{TIR}
		(T^n)_{i,j}=\int_{\Delta} x^n B_{i-1}(x)A_{j-1}(x)\,\d w(x),
		&& n\in\N_0, && i,j\in\N.
\end{align}

Its leading \(m\times m\) principal truncation is
\begin{align}
	\label{Tfinite}
	T_m\coloneq
	\begin{bNiceMatrix}
		b^{0}_0 & b^{1}_0 & &\Cdots &1&0&\Cdots&&0\\
		b^{-1}_1 & b^{0}_1 & b^1_1 &&&&\Ddots&&\Vdots\\
		&b^{-1}_{2}&b^{0}_{2}&\Ddots&&&\Ddots&&\\ \Vdots&&\Ddots&\Ddots&&&&&0\\
		b^{-p}_p&&&&&&&&1\\
		0&\Ddots&&&&&&&\Vdots\\
		&&&&&&&&\\
		\Vdots&&\Ddots&&&&&&b^1_{m-2}\\[4pt]
		0&\Cdots&&0&b^{-p}_{m-1}&\Cdots&&b^{-1}_{m-1}&b^{0}_{m-1}
	\end{bNiceMatrix},\quad m\in\N.
\end{align}

\subsection{The step line and the matrix of moments for a general matrix of measures}
We now pass to the moment matrix of a general matrix of measures. The
Gauss--Borel factorization recovers mixed orthogonality on the step line and
expresses the recurrence matrix in terms of triangular factors
\cite{BFM1}. Bidiagonal factorizations induced by elementary Christoffel
perturbations were studied in \cite{banda_bidiagonal}; below we adapt that
construction to the normalization used here.

Consider a general matrix of measures
\begin{align*}
	\d	\mu=\begin{bNiceMatrix}
 \d\mu_{1,1}&\Cdots &\d\mu_{1,p}\\
 \Vdots & & \Vdots\\
 \d	\mu_{q,1}&\Cdots &\d\mu_{q,p}
	\end{bNiceMatrix},
\end{align*}
where the measures \(\mu_{i,j}\) are supported on the interval
\(\Delta\subseteq\mathbb R\).
For \(r\in\mathbb N\), define the matrix of monomials by
\begin{align*}
X_{[r]}(x) =
\begin{bNiceMatrix}
	I_r \\
	xI_r \\
	x^2 I_r \\
	\Vdots
\end{bNiceMatrix} ,
 \end{align*}
where \( I_r \) is the usual identity matrix of order \( r \).
The moment matrix is
\begin{align*}
\mathscr{M}(\mu) := \int_{\Delta} X_{[q]}(x) \, \mathrm{d}\mu(x) \, X_{[p]}^\top(x).
 \end{align*}
When the measure is clear from context, we simply write \(\mathscr M\).

\subsection{Gauss--Borel factorization}
If all the leading principal submatrices \(\mathscr{M}^{[k]}\) are nonsingular, then the Gauss--Borel factorization exists:
\begin{align*}
\mathscr{M} = \mathscr{L}^{-1} \mathscr{U}^{-1},
 \end{align*}
where \(\mathscr{L}\) is a nonsingular lower unitriangular semi-infinite matrix and \(\mathscr{U}\) is a nonsingular upper triangular matrix. When necessary, we will denote these triangular matrices as \(\mathscr{L}(\mu)\) and \(\mathscr{U}(\mu)\), indicating the measure~\(\mu\) from which they are constructed.
All semi-infinite identities below are understood coefficientwise, or~equivalently through their compatible finite leading truncations; bandedness
ensures that every displayed matrix entry involves only finitely many terms.

\subsection{Mixed multiple orthogonal polynomials on the step-line}

Associated with the Gauss--Borel factorization, consider the following matrices of polynomials:
\begin{align*}
	B(x) = \mathscr{L} X_{[q]}(x), && A(x) = X_{[p]}^\top(x) \mathscr{U}.
 \end{align*}
We represent these matrices in terms of their polynomial entries as follows:
\begin{align*}
	B &= \begin{bNiceMatrix}
 B^{(1)}_0 & \Cdots & B^{(q)}_0 \\
 B^{(1)}_1 & \Cdots & B^{(q)}_1 \\
 B^{(1)}_2 & \Cdots & B^{(q)}_2 \\
 \Vdots[shorten-end=-0pt] & & \Vdots[shorten-end=-0pt]
	\end{bNiceMatrix}, &&
	A = \left[\begin{NiceMatrix}
 A^{(1)}_0 & A^{(1)}_1 & A^{(1)}_2 & \Cdots \\
 \Vdots & \Vdots & \Vdots & \\
 A^{(p)}_0 & A^{(p)}_1 & A^{(p)}_2 & \Cdots
	\end{NiceMatrix}\right].
 \end{align*}
We have the following relations:
\begin{align*}
\int_{\Delta} B(x) \, \mathrm{d}\mu(x) \, A(x) = I,
 \end{align*}
whose entries are given by the biorthogonality relations:
\begin{align*}
\int_{\Delta} \sum_{b=1}^q \sum_{a=1}^p B^{(b)}_n(x) \, \mathrm{d}\mu_{b,a}(x) \, A^{(a)}_m(x) = \delta_{n,m}.
 \end{align*}
The Gauss--Borel factorization also gives
\begin{align*}
	\int_\Delta B(x) \, \mathrm{d}\mu(x) X_{[p]}^\top(x) &= \mathscr{U}^{-1},
&&
\int_\Delta X_{[q]}(x) \, \mathrm{d}\mu(x) A(x) = \mathscr{L}^{-1},
\end{align*}
which, entrywise, give the following mixed multiple orthogonality relations
on the step line:
\begin{align*}
	\int_\Delta x^l \sum_{i=1}^p \mathrm{d}\mu_{j,i}(x) A_n^{(i)}(x) &= 0,
&
& j \in \{1, \ldots, q\}, && l \in \left\{0, \ldots, \left\lceil\frac{n-j+1}{q}\right\rceil-1\right\},
 \\
	\int_\Delta \sum_{j=1}^q B_n^{(j)}(x) \mathrm{d}\mu_{j,i}(x) x^l &= 0,
& 
& i \in \{1, \ldots, p\}, && l \in \left\{0, \ldots, \left\lceil\frac{n-i+1}{p}\right\rceil-1\right\}.
\end{align*}
The component \(A_n^{(i)}\) corresponds to
\(A_{\vec n,\vec m}^{(i)}\), where
\(\vec n=(n_1,\ldots,n_p)\)
and
\(\vec m=(m_1,\ldots,m_q)\)
are defined by
\begin{align}
 \label{eq:stepA-ni}
n_i = \left\lceil \frac{|\vec{n}|+1-i}{p} \right\rceil = \left\lceil \frac{n+2-i}{p} \right\rceil
&&
\text{and}
&&
m_j= \left\lceil \frac{|\vec{m}|+1-j}{q} \right\rceil = \left\lceil \frac{n+1-j}{q} \right\rceil
 .
 \end{align}
Lemma 2.1 of \cite{BFM1} gives \(|\vec n|=n+1\) and \(|\vec m|=n\).
Similarly, \(B_n^{(j)}\) corresponds to
\(B_{\vec n,\vec m}^{(j)}\), with
\(\vec n=(n_1,\ldots,n_p)\)
and
\(\vec m=(m_1,m_2,\ldots,m_q)\)
given by
\begin{align}
 \label{eq:n_idos}
n_i = \left\lceil \frac{n+1-i}{p} \right\rceil
&&
\text{and}
&&
m_j = \left\lceil \frac{n+2-j}{q} \right\rceil .
 \end{align}
In this case \(|\vec m|=|\vec n|+1\).
\subsection{Banded recurrence matrix}

For \(r\in\mathbb N\), define the block shift matrix by
\begin{align*}
\Lambda_{[r]} \coloneq
\left[
\begin{NiceMatrix}[cell-space-limits=2pt]
	0_r & I_r & 0_r & \Cdots \\
	0_r & 0_r & I_r & \Ddots \\
	0_r & 0_r & 0_r & \Ddots \\
	\Vdots[shorten-end=5pt] & \Ddots[shorten-end=5pt] & \Ddots[shorten-end=9pt] & \Ddots[shorten-end=12pt]
\end{NiceMatrix}
\right],
 \end{align*}
For \(r=1\), write \(\Lambda_{[1]}=\Lambda\). Then
\(\Lambda_{[r]}=\Lambda^r\), and
\begin{align*}
\Lambda_{[r]}X_{[r]}(x) = x X_{[r]}(x).
 \end{align*}

The moment matrix \(\mathscr M\) satisfies the Hankel-type symmetry
\begin{align*}
\Lambda_{[q]}\mathscr{M} = \mathscr{M}\Lambda_{[p]}^\top.
 \end{align*}
This relation and the Gauss--Borel factorization give
\begin{align}
\label{eq:banded_recurrence_matrix}
	T = \mathscr{L} \Lambda_{[q]} \mathscr{L}^{-1} = \mathscr{U}^{-1} \Lambda_{[p]}^\top \mathscr{U} ,
\end{align}
Thus \(T\) is a \((p,q)\)-banded matrix with \(p\) subdiagonals and \(q\)
superdiagonals. Moreover,
\begin{align*}
T B(x) = x B(x), && A(x) T = x A(x),
 \end{align*}
so \(B\) and \(A\) are, respectively, right and left eigenvectors of \(T\).
These identities encode the recurrence relations; hence \(T\) is the
recurrence matrix.

\subsection{Elementary Christoffel perturbations}
Define the \(r\times r\) polynomial matrix
\begin{align*}
\mathfrak{X}_{[r]}(x) =
\begin{bNiceMatrix}
	0 & 1 & 0 & \Cdots & 0 & 0 \\
	0 & 0 & 1 & \Ddots & & 0 \\
	\Vdots & \Vdots & \Ddots[shorten-end=10pt] & \Ddots & & \Vdots \\
	0 & 0 & & & 1 & 0 \\
	0 & 0 & \Cdots & & 0 & 1 \\
	x & 0 & \Cdots & & 0 & 0
\end{bNiceMatrix}.
 \end{align*}
It satisfies \(\mathfrak X_{[r]}^r=xI_r\) and
\(X_{[r]}\mathfrak{X}_{[r]} = \Lambda X_{[r]} \).

For \(k\in\mathbb N_0\), define the transformed matrices of measures by
\begin{align} \label{eq:Christoffel}
	\mathrm{d}\mu_L^{(k)} & \coloneq \mathrm{d}\mu \left(\mathfrak{X}_{[p]}^k\right)^\top, &
	\mathrm{d}\mu_R^{(k)} & \coloneq \mathfrak{X}_{[q]}^k \mathrm{d}\mu .
\end{align}
For \(k=0\), we have \(\mathrm{d}\mu_R^{(0)} = \mathrm{d}\mu_L^{(0)} = \mathrm{d}\mu\).
These are the Christoffel perturbations of \(\mathrm d\mu\). Their moment
matrices satisfy
\begin{align*}
	\mathscr{M}_L^{(k)} &\coloneq \mathscr{M}\left(\mu_L^{(k)}\right) =
	\mathscr{M}(\mu) \left(\Lambda^k\right)^\top, &
	\mathscr{M}_R^{(k)} &\coloneq \mathscr{M}\left(\mu_R^{(k)}\right) =
	\Lambda^k \mathscr{M}(\mu)
.
 \end{align*}
Assume that the perturbed moment matrices admit Gauss--Borel factorizations
for every left perturbation \(k=0,\ldots,p\) and every right perturbation
\(k=0,\ldots,q\):
 \begin{align*}
		\mathscr{M}_L^{(k)}& = \left(\mathscr{L}_L^{(k)}\right)^{-1}\left(\mathscr{U}_L^{(k)}\right)^{-1}, &
		\mathscr{M}_R^{(k)}& = \left(\mathscr{L}_R^{(k)}\right)^{-1}\left(\mathscr{U}_R^{(k)}\right)^{-1}
 .
 \end{align*}
In both cases, \(\mathscr{L}^{(k)}_L\) and \(\mathscr{L}^{(k)}_R\) are lower unitriangular matrices. \par

\section{Bidiagonal factorizations of the banded recurrence matrix}

We now consider the \(k\)-th left and right Christoffel perturbations.

The triangular factors of the perturbed moment matrices are connected by
elementary bidiagonal matrices.
More precisely,
\begin{pro}
The left and right transformations satisfy the following
factorization identities.
\begin{enumerate}[\rm i)]
 \item For the \(k\)-th left Christoffel transformation, there exist lower
 bidiagonal matrices with unit diagonal \(L_1,\ldots,L_k\) such that
\begin{align}\label{eq:left_Christoffel}
 \mathscr{U}_L^{-1}\left(\Lambda^k\right)^\top &= L_1 \cdots L_k \left(\mathscr{U}^{(k)}_L\right)^{-1}, &
 \mathscr{L}^{(k)}_L &= L_k^{-1} \cdots L_1^{-1} \mathscr{L}_L .
\end{align}
\item
 For the \(k\)-th right Christoffel transformation, there exist upper
 bidiagonal matrices with unit superdiagonal \(U_1,\ldots,U_k\) such that
\begin{align}\label{eq:right_Christoffel}
 \Lambda^k \mathscr{L}_R^{-1} &= \left(\mathscr{L}_R^{(k)}\right)^{-1} U_k \cdots U_1, &
 \mathscr{U}_R^{(k)} &= \mathscr{U}_R U_1^{-1} \cdots U_k^{-1} .
\end{align}
\end{enumerate}
\end{pro}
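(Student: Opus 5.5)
We argue by induction on \(k\), one elementary Christoffel step at a time. We read \(\mathscr U_L\), \(\mathscr L_L\), \(\mathscr U_R\), \(\mathscr L_R\) as the unperturbed factors \(\mathscr U\), \(\mathscr L\), that is, the \(k=0\) case. The moment matrices satisfy \(\mathscr M_L^{(k)}=\mathscr M_L^{(k-1)}\Lambda^\top\) and \(\mathscr M_R^{(k)}=\Lambda\mathscr M_R^{(k-1)}\). Substituting the Gauss--Borel factorizations, which exist by assumption for all relevant \(k\), gives for the left case
\begin{align*}
\left(\mathscr L_L^{(k-1)}\right)^{-1}\left(\mathscr U_L^{(k-1)}\right)^{-1}\Lambda^\top=\left(\mathscr L_L^{(k)}\right)^{-1}\left(\mathscr U_L^{(k)}\right)^{-1}.
\end{align*}
We then define
\begin{align*}
L_k\coloneq \mathscr L_L^{(k-1)}\left(\mathscr L_L^{(k)}\right)^{-1}=\left(\mathscr U_L^{(k-1)}\right)^{-1}\Lambda^\top\,\mathscr U_L^{(k)} .
\end{align*}
All these products are of triangular semi-infinite matrices, so each entry is a finite sum and associativity holds coefficientwise. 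This is the only point where the semi-infinite setting needs care, and triangularity settles it.

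The key step is a band-structure comparison of the two expressions for \(L_k\). The first expression is a product of lower unitriangular matrices, so \(L_k\) is lower unitriangular. The second expression is (upper triangular)\(\times\Lambda^\top\times\)(upper triangular). Since \(\Lambda^\top\) is supported on the first subdiagonal, this product has at most one nonzero subdiagonal: entry \((i,j)\) vanishes unless \(j\ge i-1\). Together these show \(L_k\) is lower bidiagonal with unit diagonal. Rewriting the defining identity yields the one-step relations
\begin{align*}
\left(\mathscr U_L^{(k-1)}\right)^{-1}\Lambda^\top=L_k\left(\mathscr U_L^{(k)}\right)^{-1},\qquad \mathscr L_L^{(k)}=L_k^{-1}\mathscr L_L^{(k-1)} .
\end{align*}
Iterating, we peel off one factor \(\Lambda^\top\) at a time:
\begin{align*}
\mathscr U^{-1}(\Lambda^\top)^k=L_1\left(\mathscr U_L^{(1)}\right)^{-1}(\Lambda^\top)^{k-1}=\cdots=L_1\cdots L_k\left(\mathscr U_L^{(k)}\right)^{-1}.
\end{align*}
Composing the \(\mathscr L\)-relations gives the second identity in \eqref{eq:left_Christoffel}.

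The right case is handled symmetrically. From
\begin{align*}
\Lambda\left(\mathscr L_R^{(k-1)}\right)^{-1}\left(\mathscr U_R^{(k-1)}\right)^{-1}=\left(\mathscr L_R^{(k)}\right)^{-1}\left(\mathscr U_R^{(k)}\right)^{-1}
\end{align*}
we define
\begin{align*}
U_k\coloneq \mathscr L_R^{(k)}\Lambda\left(\mathscr L_R^{(k-1)}\right)^{-1}=\left(\mathscr U_R^{(k)}\right)^{-1}\mathscr U_R^{(k-1)} .
\end{align*}
The second expression shows \(U_k\) is upper triangular. The first, (lower unitriangular)\(\times\Lambda\times\)(lower unitriangular), has no entries beyond the first superdiagonal, so \(U_k\) is upper bidiagonal. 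Its \((i,i+1)\) entry equals the product of the diagonal entries of the two unitriangular factors times the entry \(1\) of \(\Lambda\), hence equals \(1\). Its diagonal entries are ratios of pivots of nonsingular upper triangular matrices, so they are nonzero and \(U_k\) is invertible. Rewriting gives
\begin{align*}
\Lambda\left(\mathscr L_R^{(k-1)}\right)^{-1}=\left(\mathscr L_R^{(k)}\right)^{-1}U_k,\qquad \mathscr U_R^{(k)}=\mathscr U_R^{(k-1)}U_k^{-1}.
\end{align*}
Iterating as before produces \(\Lambda^k\mathscr L_R^{-1}=\left(\mathscr L_R^{(k)}\right)^{-1}U_k\cdots U_1\) and \(\mathscr U_R^{(k)}=\mathscr U_R U_1^{-1}\cdots U_k^{-1}\).

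I expect the main obstacle to be the precise band and entry bookkeeping. We must check that the middle-shift products have bandwidth exactly one on the correct side, and, for \(U_k\), that the superdiagonal entries are exactly \(1\) rather than merely nonzero. The unit-superdiagonal claim relies on the unitriangularity of the \(\mathscr L\) factors, so the roles of the two expressions must not be swapped.
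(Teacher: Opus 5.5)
Your proof is correct and follows essentially the paper's route: one Christoffel step at a time, you compare the Gauss--Borel factorizations of consecutive perturbed moment matrices and use that $(\mathscr U_L^{(k-1)})^{-1}\Lambda^\top$ is upper Hessenberg (resp.\ $\Lambda(\mathscr L_R^{(k-1)})^{-1}$ is lower Hessenberg), so the connecting factor is bidiagonal. The paper takes the LU factorization of the Hessenberg matrix and appeals to uniqueness, whereas you define $L_k=\mathscr L_L^{(k-1)}(\mathscr L_L^{(k)})^{-1}$ and $U_k=(\mathscr U_R^{(k)})^{-1}\mathscr U_R^{(k-1)}$ directly and check the band and the unit superdiagonal explicitly, which also yields the formulas of the subsequent corollary; your finiteness remark is slightly overstated, since an upper-times-lower product involves infinite sums, but every product you actually form is lower-times-upper or contains a banded factor, so nothing breaks.
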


\begin{proof}
i)
From the Gauss--Borel factorizations of \(\mathscr{M}_L^{(1)}\) and \(\mathscr{M}\), we obtain:
\begin{align*}
\left(\mathscr{L}^{(1)}_L\right)^{-1}\left( \mathscr{U}^{(1)}_L\right)^{-1} = \mathscr{L}_L^{-1} \mathscr{U}_L^{-1}\Lambda^\top.
 \end{align*}
The matrix \(\mathscr{U}_L^{-1}\Lambda^\top\) is upper Hessenberg, with
Gauss--Borel factorization
\begin{align*}
\mathscr{U}_L^{-1}\Lambda^\top = L_1 \left(\mathscr{U}^{(1)}_L\right)^{-1},
 \end{align*}
where \(L_1\) is a lower bidiagonal matrix with unit diagonal. Consequently:
\begin{align*}
\mathscr{L}_L^{(1)} = L_1^{-1} \mathscr{L}_L.
 \end{align*}
In general, since
\begin{align*}
\mathscr{M}_L^{(k+1)} = \mathscr{M}_L^{(k)}\Lambda^\top,
 \end{align*}
we obtain:
\begin{align*}
\begin{aligned}
	\left( \mathscr{U}_L^{(k)} \right)^{-1}\Lambda^\top &= L_{k+1} \left(\mathscr{U}^{(k+1)}_L\right)^{-1}, &
	\mathscr{L}^{(k+1)}_L &= L_{k+1}^{-1} \mathscr{L}_L^{(k)},
\end{aligned}
 \end{align*}
where \(L_{k+1}\) is a lower bidiagonal matrix with unit main diagonal. This proves \eqref{eq:left_Christoffel}.

ii)
 Consider the right Christoffel perturbations.
For the Gauss--Borel factorization of \(\mathscr{M}_R^{(1)}\), we have:
\begin{align*}
\mathscr{M}_R^{(1)} = \Lambda \mathscr{L}_R^{-1} \mathscr{U}_R^{-1}.
 \end{align*}
The matrix \(\Lambda \mathscr{L}^{-1}\) is a lower Hessenberg matrix whose Gauss--Borel factorization~is:
\begin{align*}
\Lambda \mathscr{L}_R^{-1} = \left(\mathscr{L}_R^{(1)}\right)^{-1} U_1,
 \end{align*}
where \(U_1\) is an upper bidiagonal matrix with unit superdiagonal. Consequently:
\begin{align*}
\mathscr{U}_R^{(1)} = \mathscr{U}_R U_1^{-1}.
 \end{align*}
In general, since
\begin{align*}
\mathscr{M}_R^{(k+1)} = \Lambda \mathscr{M}_R^{(k)},
 \end{align*}
we obtain:
\begin{align*}
	\Lambda \left( \mathscr{L}_R^{(k)} \right)^{-1} &= \left(\mathscr{L}_R^{(k+1)}\right)^{-1} U_{k+1}, &
	\mathscr{U}_R^{(k+1)} &= \mathscr{U}_R^{(k)} U_{k+1}^{-1}.
 \end{align*}
Consequently \eqref{eq:right_Christoffel} follows.
\end{proof}

Iterating the elementary Christoffel connections produces the bidiagonal
factorization of the recurrence matrix.
\begin{teo}
Suppose that the Gauss--Borel factorizations just specified exist for
\(k=0,\ldots,p\) on the left and \(k=0,\ldots,q\) on the right. Then the
banded recurrence matrix has the bidiagonal factorization
\begin{align}\label{eq:T_L_bidiagonal}
	T& = L_1 \cdots L_p U_q \cdots U_1.
\end{align}
\end{teo}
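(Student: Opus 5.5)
The plan is to compare the \(p\)-th left Christoffel perturbation with the \(q\)-th right one. I expect them to coincide at the level of moment matrices, and then to read off the factorization from the two elementary Christoffel connections of the preceding proposition. Throughout, \(\mathscr L_L=\mathscr L_R=\mathscr L\) and \(\mathscr U_L=\mathscr U_R=\mathscr U\) denote the unperturbed factors (the case \(k=0\)).

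\emph{Step 1 (the two perturbations agree).} Since \(\Lambda^p=\Lambda_{[p]}\) and \(\Lambda^q=\Lambda_{[q]}\), the Hankel-type symmetry \(\Lambda_{[q]}\mathscr M=\mathscr M\Lambda_{[p]}^\top\) gives
\begin{align*}
\mathscr M_L^{(p)}=\mathscr M\left(\Lambda^p\right)^\top=\mathscr M\Lambda_{[p]}^\top=\Lambda_{[q]}\mathscr M=\Lambda^q\mathscr M=\mathscr M_R^{(q)}.
\end{align*}
Both matrices admit Gauss--Borel factorizations by hypothesis. Such a factorization, with lower unitriangular first factor, is unique: if \(L^{-1}U^{-1}=\tilde L^{-1}\tilde U^{-1}\), then \(\tilde L L^{-1}=\tilde U^{-1}U\) is simultaneously lower unitriangular and upper triangular, hence equal to \(I\). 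This argument can be run on the leading finite truncations, which is legitimate because all the factors are triangular. Consequently \(\mathscr L_L^{(p)}=\mathscr L_R^{(q)}\) and \(\mathscr U_L^{(p)}=\mathscr U_R^{(q)}\).

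\emph{Step 2 (express \(T\) through both chains).} Starting from \eqref{eq:banded_recurrence_matrix}, \(T=\mathscr L\Lambda_{[q]}\mathscr L^{-1}=\mathscr L\Lambda^q\mathscr L^{-1}\). The right connection \eqref{eq:right_Christoffel} with \(k=q\) gives \(\Lambda^q\mathscr L^{-1}=(\mathscr L_R^{(q)})^{-1}U_q\cdots U_1\), so
\begin{align*}
T=\mathscr L\left(\mathscr L_R^{(q)}\right)^{-1}U_q\cdots U_1.
\end{align*}
The left connection \eqref{eq:left_Christoffel} with \(k=p\) gives \(\mathscr L_L^{(p)}=L_p^{-1}\cdots L_1^{-1}\mathscr L\), hence \(\mathscr L(\mathscr L_L^{(p)})^{-1}=L_1\cdots L_p\). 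By Step 1 we may replace \(\mathscr L_R^{(q)}\) with \(\mathscr L_L^{(p)}\), and substituting yields \(T=L_1\cdots L_pU_q\cdots U_1\).

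\emph{Main obstacle.} The only delicate point is justifying the manipulations of semi-infinite matrices: the associativity of products such as \(\mathscr L\Lambda^q\mathscr L^{-1}\), and the uniqueness argument used in Step 1. I would handle this as the paper announces, coefficientwise. Products of lower triangular matrices, or of a triangular matrix with a banded one, involve only finitely many terms in each entry. The inverses of unitriangular matrices are again triangular. Each identity can therefore be checked on leading \(N\times N\) truncations, enlarged by a margin of \(p+q\) rows and columns to absorb the band. A secondary point to confirm is that the bidiagonal factors \(L_k,U_k\) produced in the preceding proposition are exactly those in \eqref{eq:left_Christoffel}--\eqref{eq:right_Christoffel}, which holds by construction.
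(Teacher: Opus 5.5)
Your proof is correct and is essentially the paper's argument. You identify the $p$-th left and $q$-th right Christoffel perturbations (the paper via $\mathrm d\mu_L^{(p)}=x\,\mathrm d\mu=\mathrm d\mu_R^{(q)}$, you via the Hankel symmetry of $\mathscr M$) and splice the two chains of elementary connections using uniqueness of the Gauss--Borel factorization; the only difference is that you work with the lower factors, $T=\mathscr L\Lambda^q\mathscr L^{-1}$ and $\mathscr L_L^{(p)}=\mathscr L_R^{(q)}$, whereas the paper does the mirror-image computation with $T=\mathscr U^{-1}(\Lambda^p)^\top\mathscr U$ and $\mathscr U_L^{(p)}=\mathscr U_R^{(q)}$.
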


\begin{proof}
Equations \eqref{eq:banded_recurrence_matrix} and
\eqref{eq:left_Christoffel} give
\begin{align*}
	T&= \mathscr{U}_L^{-1} \left(\Lambda^p\right)^\top \mathscr{U}_L = L_1 \cdots L_p \left(\mathscr{U}^{(p)}_L\right)^{-1}\mathscr{U}_L,
 \end{align*}
Since \(\mathfrak X_{[p]}^p=xI_p\) and
\(\mathfrak X_{[q]}^q=xI_q\),
\begin{align*}
 \d\mu_L^{(p)}=x\,\d\mu=\d\mu_R^{(q)}.
\end{align*}
The uniqueness of the normalized Gauss--Borel factorization therefore gives
\(\mathscr U_L^{(p)}=\mathscr U_R^{(q)}\). Using
\eqref{eq:right_Christoffel} we~obtain:
\begin{align*}
\begin{aligned}
	\left(\mathscr{U}^{(q)}_R\right)^{-1}\mathscr{U}_L & = U_q \cdots U_1
\end{aligned}
 \end{align*}
which proves \eqref{eq:T_L_bidiagonal}.

\end{proof}
The proof yields the following corollary.
\begin{coro}
	The bidiagonal matrices satisfy
\begin{align*}
	L_{k} &= \left( \mathscr{U}_L^{(k-1)} \right)^{-1}\Lambda^\top\mathscr{U}^{(k)}_L =\mathscr{L}_L^{(k-1)}\left(\mathscr{L}^{(k)}_L\right)^{-1}, &
	U_k &= \mathscr{L}_R^{(k)}\Lambda \left( \mathscr{L}_R^{(k-1)} \right)^{-1} =\left(\mathscr{U}^{(k)}_R\right)^{-1}\mathscr{U}_R^{(k-1)},
 \end{align*}
and the corresponding entries are
\begin{align*}
	(L_k)_{n+1,n}&=\frac{(\mathscr U_L^{(k)})_{n,n}}{(\mathscr U_L^{(k-1)})_{n+1,n+1}}=(\mathscr L_L^{(k-1)})_{n+1,n}-(\mathscr L_L^{(k)})_{n+1,n},&
	 (U_k)_{n,n}&=\frac{(\mathscr U_R^{(k-1)})_{n,n}}{(\mathscr U_R^{(k)})_{n,n}}.
\end{align*}

\end{coro}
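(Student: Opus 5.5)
The plan is to read both formulas for each bidiagonal factor directly off the one-step Christoffel identities established in the proof of the preceding proposition. We then compare diagonals (and, for \(L_k\), subdiagonals) of triangular matrices. Throughout, \(\mathscr L_L^{(0)}=\mathscr L_L\), \(\mathscr U_L^{(0)}=\mathscr U_L\), and similarly on the right.

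\emph{The matrix identities for \(L_k\).} In the proof of the proposition we obtained
\begin{align*}
\bigl(\mathscr U_L^{(k-1)}\bigr)^{-1}\Lambda^\top=L_k\bigl(\mathscr U_L^{(k)}\bigr)^{-1},
&&
\mathscr L_L^{(k)}=L_k^{-1}\mathscr L_L^{(k-1)}.
\end{align*}
Multiplying the first on the right by \(\mathscr U_L^{(k)}\) gives \(L_k=(\mathscr U_L^{(k-1)})^{-1}\Lambda^\top\mathscr U_L^{(k)}\). Multiplying the second on the left by \(L_k\) and on the right by \((\mathscr L_L^{(k)})^{-1}\) gives \(L_k=\mathscr L_L^{(k-1)}(\mathscr L_L^{(k)})^{-1}\). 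Both manipulations are legitimate coefficientwise, because all factors are triangular, so every entry is a finite sum.

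\emph{The matrix identities for \(U_k\).} The right case is symmetric. From
\begin{align*}
\Lambda\bigl(\mathscr L_R^{(k-1)}\bigr)^{-1}=\bigl(\mathscr L_R^{(k)}\bigr)^{-1}U_k,
&&
\mathscr U_R^{(k)}=\mathscr U_R^{(k-1)}U_k^{-1},
\end{align*}
we get \(U_k=\mathscr L_R^{(k)}\Lambda(\mathscr L_R^{(k-1)})^{-1}\) and \(U_k=(\mathscr U_R^{(k)})^{-1}\mathscr U_R^{(k-1)}\).

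\emph{The entries of \(L_k\).} Take the identity \(L_k\,(\mathscr U_L^{(k)})^{-1}=(\mathscr U_L^{(k-1)})^{-1}\Lambda^\top\) and look at entry \((n+1,n)\).
\begin{itemize}
\item On the left, \(L_k\) is lower bidiagonal and \((\mathscr U_L^{(k)})^{-1}\) is upper triangular. The only surviving term is \((L_k)_{n+1,n}\,((\mathscr U_L^{(k)})^{-1})_{n,n}\), and this diagonal entry equals \(1/(\mathscr U_L^{(k)})_{n,n}\).
\item On the right, \((\Lambda^\top)_{a,n}=\delta_{a,n+1}\), so the entry is \(((\mathscr U_L^{(k-1)})^{-1})_{n+1,n+1}=1/(\mathscr U_L^{(k-1)})_{n+1,n+1}\).
\end{itemize}
Equating the two sides gives the ratio formula.

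For the difference formula, write \(\mathscr L_L^{(k-1)}=L_k\mathscr L_L^{(k)}\) and compare entry \((n+1,n)\). Since both \(\mathscr L\)'s are unitriangular, this gives \((\mathscr L_L^{(k-1)})_{n+1,n}=(\mathscr L_L^{(k)})_{n+1,n}+(L_k)_{n+1,n}\cdot 1\).

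\emph{The diagonal of \(U_k\).} The diagonal of \(U_k=(\mathscr U_R^{(k)})^{-1}\mathscr U_R^{(k-1)}\), a product of upper triangular matrices, is the product of their diagonals. Hence \((U_k)_{n,n}=(\mathscr U_R^{(k-1)})_{n,n}/(\mathscr U_R^{(k)})_{n,n}\).

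\emph{Main obstacle.} The only delicate point is index bookkeeping. The plan is to check carefully that \(\Lambda^\top\) shifts the row index down by one, so that the ratio pairs \((\mathscr U_L^{(k)})_{n,n}\) with \((\mathscr U_L^{(k-1)})_{n+1,n+1}\) and not with index \(n\). After that it only remains to confirm that triangularity makes each entry a single term.
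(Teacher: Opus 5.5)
Your proposal is correct and follows the route the paper intends: the paper gives no separate argument (``The proof yields the following corollary''), and the corollary is read off from the one-step Christoffel identities in the proof of the preceding proposition, with the entry formulas obtained by comparing the $(n+1,n)$ entries and the diagonals of the triangular products, exactly as you do. One small imprecision is your finiteness justification: an upper-times-lower product of triangular matrices is not automatically a finite sum, so it is the bandedness of $\Lambda^\top$ and $L_k$ that makes every entry finite and your rearrangements associative.
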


Let \(r(n,p)\) denote the remainder when \(n\) is divided by \(p\).
After \(k\) left or right Christoffel perturbations, denote the corresponding
mixed multiple orthogonal polynomials by \(A_L^{(k)}\) and \(A_R^{(k)}\),
respectively.

Given a polynomial \( P\) we denote its leading coefficient by \( \operatorname{LC}
(P) \).
Then, we can express the bidiagonal matrix entries as follows:
\begin{coro}\label{coro}
	In terms of the leading coefficients of the Christoffel-perturbed mixed multiple orthogonal polynomials, we have:
\begin{align*}
 (L_k)_{n+1,n}&=\frac{\operatorname{LC}\left((A_L^{(k)})^{(r(n,p)+1)}_n\right)}{\operatorname{LC}\left((A_L^{(k-1)})^{(r(n+1,p)+1)}_{n+1}\right)},&
 (U_k)_{n,n}&=\frac{\operatorname{LC}\left((A_R^{(k-1)})^{(r(n,p)+1)}_n\right)}{\operatorname{LC}\left((A_R^{(k)})^{(r(n,p)+1)}_{n}\right)}.
 \end{align*}
\end{coro}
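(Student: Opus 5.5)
The plan is to start from the preceding corollary, which expresses both entries through diagonal entries of the upper triangular Gauss--Borel factors:
\begin{align*}
(L_k)_{n+1,n}=\frac{(\mathscr U_L^{(k)})_{n,n}}{(\mathscr U_L^{(k-1)})_{n+1,n+1}},
&&
(U_k)_{n,n}=\frac{(\mathscr U_R^{(k-1)})_{n,n}}{(\mathscr U_R^{(k)})_{n,n}} .
\end{align*}
It then suffices to identify each diagonal entry \((\mathscr U)_{n,n}\), for any of the perturbed measures, with a leading coefficient of the corresponding type-I polynomial. We recall that \(A(x)=X_{[p]}^\top(x)\mathscr U\). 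Hence the \(n\)-th column of \(\mathscr U\) (indices starting at \(0\), as in the corollary) lists the coefficients of the column \((A^{(1)}_n,\ldots,A^{(p)}_n)^\top\) in the basis given by the rows of \(X_{[p]}^\top\). These rows are ordered as \(e_1,\ldots,e_p,xe_1,\ldots,xe_p,x^2e_1,\ldots\), so the row with index \(N\) corresponds to the monomial \(x^{\lfloor N/p\rfloor}\) in component \(r(N,p)+1\).

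Since \(\mathscr U\) is upper triangular, column \(n\) has nonzero entries only in rows \(0,\ldots,n\). The diagonal entry, in row \(n\), therefore multiplies \(x^{\lfloor n/p\rfloor}\) in component \(r(n,p)+1\). Every other row \(N<n\) that also feeds component \(r(n,p)+1\) satisfies \(N\equiv n \pmod p\), so \(N\le n-p\) and it carries a strictly smaller power. Consequently \((\mathscr U)_{n,n}\) is exactly the coefficient of \(x^{\lfloor n/p\rfloor}\) in \(A^{(r(n,p)+1)}_n\).

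The coefficient just found is a genuine leading coefficient, because it is nonzero. Nonsingularity of the leading principal submatrices makes \(\mathscr U\) nonsingular, so its diagonal entries do not vanish. It is also consistent with the degree bound \eqref{eq:stepA-ni}: for \(i=r(n,p)+1\) we get \(n_i-1=\lceil (n+1-r(n,p))/p\rceil-1=\lfloor n/p\rfloor\). So \((\mathscr U)_{n,n}=\operatorname{LC}(A^{(r(n,p)+1)}_n)\). This argument uses only the Gauss--Borel structure and no AT hypothesis, so it applies verbatim to \(\mu_L^{(k)}\) and \(\mu_R^{(k)}\), whose type-I polynomials are \(A_L^{(k)}\) and \(A_R^{(k)}\).

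Substituting this identification into the two quotients gives the stated formulas. In the \(L_k\) formula the numerator uses index \(n\) with component \(r(n,p)+1\), and the denominator uses index \(n+1\) with component \(r(n+1,p)+1\). The \(U_k\) formula uses index \(n\) with component \(r(n,p)+1\) in both numerator and denominator.

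The main point requiring care is the index bookkeeping: matching \(0\)-based matrix indices with the ordering of the monomial matrix \(X_{[p]}\), and checking that the perturbed moment matrices are still of the form \(\int X_{[q]}\,\d\mu^{(k)}X_{[p]}^\top\). The latter holds by \eqref{eq:Christoffel}, so the same \(X_{[p]}\) convention applies. The perturbed measures need not come from a rank-one or AT system; we only need the Gauss--Borel factorizations, which are assumed.
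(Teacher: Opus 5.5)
Your proposal is correct and follows essentially the same route as the paper. You read off \((\mathscr U)_{n,n}\) as the coefficient of \(x^{\lfloor n/p\rfloor}\) in component \(r(n,p)+1\) of \(A_n\) via \(A=X_{[p]}^\top\mathscr U\) and upper triangularity, apply this to every perturbed factorization, and substitute into the diagonal-entry ratios of the preceding corollary; your added remark that these diagonal entries are nonzero (so the coefficient really is the leading one) makes explicit a point the paper leaves implicit.
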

\begin{proof}
Write \(n=ps+r\), with \(0\le r<p\). From
\(A=X_{[p]}^{\top}\mathscr U\), the entry
\((\mathscr U)_{n,n}\) multiplies the monomial \(x^s\) in component
\(r+1\) of \(A_n\). Upper triangularity implies that every other
contribution to that component has degree at most \(s-1\). Hence
\begin{align*}
 (\mathscr U)_{n,n}
 =\operatorname{LC}\bigl((A)^{(r(n,p)+1)}_n\bigr).
\end{align*}
The same identity holds for each left and right perturbed factorization.
Substitution into the two diagonal-entry ratios in the preceding corollary
gives the displayed formulas.
\end{proof}

Let \(T_L^{(k)}\), respectively
\(T_R^{(k)}\),
denote the recurrence matrix
after the \(k\)-th left, respectively right, Christoffel transformation.
Both~use the normalization
fixed above.
The preceding factorizations identify these transformations as Darboux
rearrangements.
\begin{pro}
The perturbed recurrence matrices are the Darboux transformations of the recurrence matrix:
\begin{align*}
 T^{(k)}_L&= L_{k+1}\cdots L_p U_q\cdots U_1L_1\cdots L_k,
 & k&\in\{1,\ldots,p\},\\
 T^{(k)}_R&= U_k\cdots U_1L_1\cdots L_pU_q\cdots U_{k+1},
 & k&\in\{1,\ldots,q\}.
	\end{align*}
\end{pro}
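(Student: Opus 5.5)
The plan is to run, for the perturbed measure, the same argument that gave \eqref{eq:T_L_bidiagonal}, and then rearrange the factors. For the left case, \(\mu_L^{(k)}\) has the Gauss--Borel factorization with factors \(\mathscr L_L^{(k)},\mathscr U_L^{(k)}\). By \eqref{eq:banded_recurrence_matrix} applied to this measure,
\begin{align*}
T^{(k)}_L=\bigl(\mathscr U_L^{(k)}\bigr)^{-1}\bigl(\Lambda^p\bigr)^\top\mathscr U_L^{(k)} .
\end{align*}
By the first identity of \eqref{eq:left_Christoffel} with \(k\) replaced by \(p\),
\begin{align*}
\mathscr U_L^{-1}\bigl(\Lambda^p\bigr)^\top=L_1\cdots L_p\bigl(\mathscr U_L^{(p)}\bigr)^{-1}.
\end{align*}
Its iterative form (the relations \(\bigl(\mathscr U_L^{(j)}\bigr)^{-1}\Lambda^\top=L_{j+1}\bigl(\mathscr U_L^{(j+1)}\bigr)^{-1}\) in the proof) gives \(\bigl(\mathscr U_L^{(k)}\bigr)^{-1}\bigl(\Lambda^{p-k}\bigr)^\top=L_{k+1}\cdots L_p\bigl(\mathscr U_L^{(p)}\bigr)^{-1}\). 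Splitting \(\bigl(\Lambda^p\bigr)^\top=\bigl(\Lambda^{p-k}\bigr)^\top\bigl(\Lambda^{k}\bigr)^\top\), I will write
\begin{align*}
T^{(k)}_L=L_{k+1}\cdots L_p\bigl(\mathscr U_L^{(p)}\bigr)^{-1}\bigl(\Lambda^k\bigr)^\top\mathscr U_L^{(k)} .
\end{align*}
Next I use the identity \(\bigl(\Lambda^k\bigr)^\top\mathscr U_L^{(k)}=\mathscr U_L\, L_1\cdots L_k\), which is just a rearrangement of \eqref{eq:left_Christoffel}. Substituting it gives \(T^{(k)}_L=L_{k+1}\cdots L_p\bigl(\mathscr U_L^{(p)}\bigr)^{-1}\mathscr U_L L_1\cdots L_k\). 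As in the proof of the theorem, \(\mathscr U_L^{(p)}=\mathscr U_R^{(q)}\) and \(\bigl(\mathscr U_R^{(q)}\bigr)^{-1}\mathscr U_L=U_q\cdots U_1\), with \(\mathscr U_L=\mathscr U_R=\mathscr U\) being the unperturbed factor. This yields the claimed formula for \(T^{(k)}_L\).

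The right case is symmetric. Start from \(T^{(k)}_R=\mathscr L_R^{(k)}\Lambda^q\bigl(\mathscr L_R^{(k)}\bigr)^{-1}\) and split \(\Lambda^q=\Lambda^{k}\Lambda^{q-k}\). Then use \(\mathscr L_R^{(k)}\Lambda^k=U_k\cdots U_1\mathscr L_R\), obtained from \eqref{eq:right_Christoffel}, and \(\Lambda^{q-k}\bigl(\mathscr L_R^{(k)}\bigr)^{-1}=\bigl(\mathscr L_R^{(q)}\bigr)^{-1}U_q\cdots U_{k+1}\), the iterated relation from the proof. This gives
\begin{align*}
T^{(k)}_R=U_k\cdots U_1\,\mathscr L\bigl(\mathscr L_R^{(q)}\bigr)^{-1}\,U_q\cdots U_{k+1}.
\end{align*}
Finally, \(\mathscr L_R^{(q)}=\mathscr L_L^{(p)}=L_p^{-1}\cdots L_1^{-1}\mathscr L\), because \(\mu_R^{(q)}=\mu_L^{(p)}=x\,\d\mu\) and the normalized factorization is unique. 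Hence \(\mathscr L\bigl(\mathscr L_R^{(q)}\bigr)^{-1}=L_1\cdots L_p\), which completes the right case.

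The main obstacle is justifying the products of semi-infinite matrices and the associativity used in these rearrangements. Every factor involved is triangular, and the non-triangular factors (\(\Lambda\) and the bidiagonals) are banded, so each entry of every product is a finite sum. I will note this explicitly, or equivalently argue through compatible finite truncations, as the paper stipulates. A second point to check is that the normalizations match: \(T^{(k)}_L\) must be the recurrence matrix of \(\mu_L^{(k)}\) with its own unitriangular \(\mathscr L_L^{(k)}\). This holds by the convention fixed above, and it is consistent with \(b^q_n=1\), since both products have unit \(q\)-th superdiagonal, coming from the unit diagonal of the \(L_j\) and the unit superdiagonal of the \(U_j\).
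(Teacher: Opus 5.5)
Your proof is correct and is essentially the paper's argument: you conjugate by the perturbed Gauss--Borel factor, telescope the elementary Christoffel connection formulas, and join the left and right chains through $\mathscr U_L^{(p)}=\mathscr U_R^{(q)}$, which follows from $\mathrm{d}\mu_L^{(p)}=x\,\mathrm{d}\mu=\mathrm{d}\mu_R^{(q)}$ and uniqueness of the normalized factorization. The only variation is in the right case, where you conjugate by $\mathscr L_R^{(k)}$ and use the mirror identity $\mathscr L_R^{(q)}=\mathscr L_L^{(p)}=L_p^{-1}\cdots L_1^{-1}\mathscr L$, whereas the paper works with $\bigl(\mathscr U_R^{(k)}\bigr)^{-1}(\Lambda^p)^\top\mathscr U_R^{(k)}$ and the upper factors throughout; both versions rest on the same uniqueness argument.
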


\begin{proof}
Equations~\eqref{eq:banded_recurrence_matrix},
\eqref{eq:left_Christoffel}, and \eqref{eq:right_Christoffel} give
	\begin{align*}
T^{(k)}_L &= \left(\mathscr{U}^{(k)}_L\right)^{-1} (\Lambda^p)^\top \mathscr{U}^{(k)}_L \\&=
L_{k+1} \cdots L_p \left(\mathscr{U}^{(p)}_L\right)^{-1} (\Lambda^{k})^\top \mathscr{U}^{(k)}_L\\
&= L_{k+1} \cdots L_p \left(\mathscr{U}^{(q)}_R\right)^{-1} (\Lambda^{k})^\top \mathscr{U}^{(k)}_L
\\&=L_{k+1} \cdots L_p U_q \cdots U_1 \left(\mathscr{U}^{(0)}_R\right)^{-1} (\Lambda^{k})^\top \mathscr{U}^{(k)}_L\\&= L_{k+1} \cdots L_p U_q \cdots U_1 \left(\mathscr{U}^{(0)}_L\right)^{-1} (\Lambda^{k})^\top \mathscr{U}^{(k)}_L\\&=L_{k+1} \cdots L_p U_q \cdots U_1 L_1 \cdots L_k
\end{align*}
Again, from \eqref{eq:banded_recurrence_matrix}, \eqref{eq:left_Christoffel} and \eqref{eq:right_Christoffel}
\begin{align*}
T^{(k)}_R &= \left(\mathscr{U}^{(k)}_R\right)^{-1} (\Lambda^p)^\top \mathscr{U}^{(k)}_R \\&=
U_{k} \cdots U_1 \left(\mathscr{U}^{(0)}_R \right)^{-1} (\Lambda^{p})^\top \mathscr{U}^{(k)}_R\\
&= U_{k} \cdots U_1 \left(\mathscr{U}^{(0)}_L \right)^{-1} (\Lambda^{p})^\top \mathscr{U}^{(k)}_R\\
&= U_{k} \cdots U_1 L_1 \cdots L_p \left(\mathscr{U}^{(p)}_L\right)^{-1} \mathscr{U}^{(k)}_R
\\&=U_{k} \cdots U_1 L_1 \cdots L_p \left(\mathscr{U}^{(q)}_R \right)^{-1} \mathscr{U}^{(k)}_R
\\&= U_{k} \cdots U_1 L_1 \cdots L_p U_q \cdots U_{k+1}
\end{align*}\end{proof}

\section{Mixed Piñeiro orthogonal polynomials}

Let
\(\vec{\alpha} = \begin{bNiceMatrix}
	\alpha_1 & \Cdots & \alpha_p ,
\end{bNiceMatrix}\),
\(\vec{\beta} = \begin{bNiceMatrix}
	\beta_1 & \Cdots & \beta_q
\end{bNiceMatrix} \),
and introduce a mixed Piñeiro matrix of measures of the~form
\begin{align*}
\d \mu_{\vec{\alpha}, \vec{\beta}} =
\begin{bNiceMatrix}
	x^{\beta_1} \\ \Vdots \\ x^{\beta_q}
\end{bNiceMatrix}
\begin{bNiceMatrix}
	x^{\alpha_1} & \Cdots & x^{\alpha_p}
\end{bNiceMatrix} \, \d x,
 \end{align*}
supported on \(\Delta=[0,1]\). We assume \(\alpha_i>-1\), \(\beta_j>-1\),
and
\begin{align*}
 \alpha_i+\beta_j>-1,
 && 1\le i\le p, && 1\le j\le q,
\end{align*}
so that every entry \(x^{\alpha_i+\beta_j}\,\d x\) is integrable. We also
assume
\(\alpha_i-\alpha_h\notin\mathbb Z\) for \(i\ne h\) and
\(\beta_j-\beta_s\notin\mathbb Z\) for \(j\ne s\). Under these
hypotheses the two systems are AT systems and the corresponding matrix of
measures is perfect; in particular, all admissible multi-indices are normal
\cite{Ulises-Sergio-Judith}.

For \(|\vec m|=|\vec n|-1\), write the type-I linear form as
\(
A_{\vec{n},\vec{m}} (x)= \sum_{i=1}^p A_{\vec{n},\vec{m}}^{(i)} (x) x^{\alpha_i}
\)
where \(\deg A_{\vec n,\vec m}^{(i)}\le n_i-1\).
It~satisfies
\begin{align*}
\int_0^1 A_{\vec n,\vec m}(x)x^{k+\beta_j}\,\d x=0,
&& 0\le k\le m_j-1,&& 1\le j\le q.
\end{align*}
Perfectness implies that these conditions determine the form up to a nonzero
constant.
Motivated by the contour method in \cite{BDFMW}, we obtain the following
direct representation.
\begin{pro}
Up to a nonzero constant, the type-I form has the contour representation
\begin{align}
\label{eq:contourPinheiro}
A_{\vec{n},\vec{m}} (x) = \int_{\Sigma} \frac{x^t \prod_{j=1}^q (t+\beta_j+1)_{m_j}}{\prod_{i=1}^p (\alpha_i-t)_{n_i}} \frac{dt}{ 2 \pi i }
\end{align}
where \(\Sigma\subset\mathbb C\) is a clockwise contour enclosing once every
point
\begin{align*}
 \alpha_i,\alpha_i+1,\ldots,\alpha_i+n_i-1,
 && 1\le i\le p,
\end{align*}
and no other pole of the integrand. For \(0<x<1\), we use
\(x^t=\exp(t\log x)\) with the real logarithm.
\end{pro}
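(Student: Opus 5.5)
The plan is to check three things directly from the contour integral: that it has the type-I shape, that it satisfies the orthogonality conditions, and that it is not identically zero. Perfectness, recorded before the statement, then identifies it with \(A_{\vec n,\vec m}\) up to a constant.

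\emph{Shape.} The integrand is meromorphic in \(t\). Its only poles inside \(\Sigma\) are the zeros of \(\prod_i(\alpha_i-t)_{n_i}\), namely the points \(\alpha_i+k\) with \(0\le k\le n_i-1\). These are simple, because \(\alpha_i-\alpha_h\notin\mathbb Z\) for \(i\ne h\). By the residue theorem (clockwise orientation only changes a sign) the integral equals, up to sign,
\begin{align*}
\sum_{i=1}^p\sum_{k=0}^{n_i-1} c_{i,k}\,x^{\alpha_i+k},
\end{align*}
where each \(c_{i,k}\) is an explicit residue. Collecting the terms with the same \(i\) gives \(\sum_i A^{(i)}(x)x^{\alpha_i}\) with \(\deg A^{(i)}\le n_i-1\). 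This is exactly the admissible space of type-I forms.

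\emph{Orthogonality.} Fix \(j\) and \(0\le k\le m_j-1\). Choose \(\Sigma\) compact, so that \(\Re t\) is bounded on it; if necessary, shrink it to small circles around the poles. Then \(\Re(t+\beta_j+k)>-1\) on \(\Sigma\), and Fubini lets us integrate in \(x\) first:
\begin{align*}
\int_0^1 x^{t+\beta_j+k}\,\d x=\frac{1}{t+\beta_j+k+1}.
\end{align*}
Since \(0\le k\le m_j-1\), the factor \(t+\beta_j+k+1\) occurs in \((t+\beta_j+1)_{m_j}\) and cancels. What remains is
\begin{align*}
\int_\Sigma \frac{\prod_{s\ne j}(t+\beta_s+1)_{m_s}\,(t+\beta_j+1)_k\,(t+\beta_j+k+2)_{m_j-k-1}}{\prod_i(\alpha_i-t)_{n_i}}\,\frac{\d t}{2\pi i},
\end{align*}
whose integrand is a rational function \(R(t)\). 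Its numerator has degree \(|\vec m|-1\) and its denominator has degree \(|\vec n|=|\vec m|+1\), so \(R(t)=O(t^{-2})\) at infinity. \(\Sigma\) contains every pole of \(R\), because the numerator is a polynomial. Deforming \(\Sigma\) to a large circle therefore gives zero, since the sum of all residues of a rational function decaying like \(t^{-2}\) vanishes.

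\emph{Non-vanishing.} Take \(i\) with \(n_i\ge1\) and compute the residue at \(t=\alpha_i+n_i-1\). Its numerator factor is \(\prod_j(\alpha_i+n_i+\beta_j)_{m_j}\). This is nonzero provided \(\alpha_i+\beta_j+n_i+\ell\ne0\) for \(0\le\ell\le m_j-1\), which follows from \(\alpha_i+\beta_j>-1\) together with \(n_i\ge1\). The resulting monomials \(x^{\alpha_i+k}\) are linearly independent functions on \((0,1)\), again by the non-integer differences, so the form is nonzero. Uniqueness up to a constant, from perfectness, then completes the proof.

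I expect the main care to be needed in the Fubini/convergence step and in the degree count showing \(R(t)=O(t^{-2})\). Both are checked directly.
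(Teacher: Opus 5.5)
Your proposal is correct and follows essentially the same route as the paper: the residue expansion gives the type-I shape, cancelling $t+\beta_j+k+1$ against $(t+\beta_j+1)_{m_j}$ together with the $O(t^{-2})$ decay and the vanishing residue at infinity gives orthogonality, a nonzero residue gives non-vanishing, and perfectness gives the identification. The differences are cosmetic. You exchange the $x$- and $t$-integrals by Fubini, which is valid after shrinking $\Sigma$ to small circles because $\alpha_i+\beta_j>-1$ gives $\Re(t+\beta_j+k)>-1$ there; the paper instead takes the Mellin transform of the finite residue sum term by term. You also check the residue at $\alpha_i+n_i-1$ rather than at $\alpha_i$.
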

For later sign checks, recall that our clockwise convention gives
\begin{align*}
 \frac{1}{2\pi\mathrm i}\int_\Sigma f(t)\,\d t
 =-\sum_{a\text{ inside }\Sigma}\operatorname{Res}_{t=a}f(t).
\end{align*}

\begin{proof}
By the residue theorem, the contour integral is a finite linear combination
of the functions \(x^{\alpha_i+k}\), \(0\le k<n_i\), and hence has the
required type-I form and component degree bounds. We may therefore compute
its Mellin transform term by term. Equivalently, for
\(s=\beta_j+h\), \(1\le h\le m_j\), we have
\begin{align*}
\int_0^1 A_{\vec n,\vec m}(x)x^{s-1}\,\d x
 = \int_\Sigma
 \frac{\prod_{r=1}^q(t+\beta_r+1)_{m_r}}
 {\prod_{i=1}^p(\alpha_i-t)_{n_i}}
 \frac{1}{t+s}\,\frac{\d t}{2\pi\mathrm i}.
\end{align*}
The numerator contains the factor
\(t+s=t+\beta_j+h\), which cancels the last denominator. Since
\(|\vec m|=|\vec n|-1\), the resulting rational function is
\(O(t^{-2})\) at infinity, and hence its residue at infinity is zero. After
cancellation its only finite poles are the points enclosed by \(\Sigma\).
The global residue theorem therefore gives
\begin{align*}
 \sum_{a\text{ inside }\Sigma}\operatorname{Res}_{t=a}f(t)
 =-\operatorname{Res}_{t=\infty}f(t)=0.
\end{align*}
The clockwise residue convention then makes the displayed integral vanish.
This proves all the required orthogonality conditions. The residue at
\(t=\alpha_i\) is nonzero whenever \(n_i\ge1\), under the standing
parameter assumptions. Perfectness then identifies the resulting nonzero
form, up to its common multiplicative
constant, with \(A_{\vec n,\vec m}\).
\end{proof}

Taking residues in the preceding contour representation produces the
hypergeometric form componentwise.
\begin{coro}
For every \(i\in\{1,\ldots,p\}\) with \(n_i\ge1\), the integral
representation gives the following explicit formula, up to a common nonzero
constant:
\begin{multline}
\label{PinheiroI}
A_{\vec{n},\vec{m}}^{(i)} (x)
=\frac{\prod_{j=1}^q (\alpha_i+\beta_j+1)_{m_j}}{(n_i-1)!
\prod_{j=1, j\ne i}^p(\alpha_j-\alpha_i)_{n_j}} \\
\times \pFq{p+q}{p+q-1}{-n_i+1,\,\{\alpha_i+\beta_k+m_k+1\}_{k=1}^q,\,\{\alpha_i-\alpha_k-n_k+1\}_{k=1,k\neq i}^p}{\{\alpha_i+\beta_k+1\}_{k=1}^q,\,\{\alpha_i-\alpha_k+1\}_{k=1,k\neq i}^p}{x} .
 \end{multline}
The individual coefficients are
 \begin{align}
 \label{eq:kcoeficientAn}
 A_{\vec{n},\vec{m}}^{(i)}[k] = \frac{\prod_{j=1}^q (\alpha_i+k+\beta_j+1)_{m_j}}{\prod_{j=1, j\ne i}^p (\alpha_j-\alpha_i-k)_{n_j} k! (n_i-k-1)! (-1)^{k}} ,
 \end{align}
and the leading coefficient is
\begin{align*}
\operatorname{LC} \left(A_{\vec{n},\vec{m}}^{(i)}\right)= A_{\vec{n},\vec{m}}^{(i)}[n_i-1] = \frac{\prod_{j=1}^q (\alpha_i+n_i+\beta_j)_{m_j}}{\prod_{j=1, j\ne i}^p (\alpha_j-\alpha_i-n_i+1)_{n_j} (n_i-1)! (-1)^{n_i-1}} .
\end{align*}
\end{coro}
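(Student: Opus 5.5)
The plan is to evaluate the contour integral \eqref{eq:contourPinheiro} by residues and to group the residues according to the weight \(x^{\alpha_i}\) they multiply. First I check that every pole is simple. The poles inside \(\Sigma\) are the points \(t=\alpha_i+k\), \(0\le k\le n_i-1\). Since \(\alpha_i-\alpha_h\notin\mathbb Z\) for \(i\ne h\), these points are pairwise distinct, and no factor \((\alpha_h-t)_{n_h}\) with \(h\ne i\) vanishes at \(t=\alpha_i+k\). Hence the residue at \(t=\alpha_i+k\) equals \(x^{\alpha_i}x^k\) times a constant. Collecting, for fixed \(i\), the residues at \(k=0,\ldots,n_i-1\) gives exactly the component \(A^{(i)}_{\vec n,\vec m}(x)\), with \(x^k\)-coefficient equal to minus that residue constant (clockwise convention).

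Next I compute this constant, which proves \eqref{eq:kcoeficientAn}. Write \((\alpha_i-t)_{n_i}=\prod_{r=0}^{n_i-1}(\alpha_i-t+r)\). The factor with \(r=k\) equals \(-(t-\alpha_i-k)\), and it contributes a sign \(-1\) to the residue. This sign cancels the minus sign of the clockwise convention. The remaining factors evaluated at \(t=\alpha_i+k\) give
\begin{align*}
\prod_{r\ne k}(r-k)=(-1)^k k!\,(n_i-k-1)!.
\end{align*}
The other factors evaluate directly: the numerator gives \(\prod_j(\alpha_i+k+\beta_j+1)_{m_j}\), and the remaining denominator factors give \(\prod_{h\ne i}(\alpha_h-\alpha_i-k)_{n_h}\). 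This is \eqref{eq:kcoeficientAn}. Setting \(k=n_i-1\) and using \((\alpha_i+n_i-1+\beta_j+1)_{m_j}=(\alpha_i+n_i+\beta_j)_{m_j}\) gives the leading coefficient formula.

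Finally I convert the coefficients to hypergeometric form by factoring out the \(k=0\) term, which is the displayed prefactor in \eqref{PinheiroI}. I use three standard Pochhammer identities:
\begin{align*}
(a+k)_m=\frac{(a)_m(a+m)_k}{(a)_k},&&
\frac{1}{(b-k)_n}=\frac{(1-b-n)_k}{(b)_n(1-b)_k},&&
\frac{1}{(n_i-k-1)!}=\frac{(-1)^k(-n_i+1)_k}{(n_i-1)!}.
\end{align*}
Apply the first with \(a=\alpha_i+\beta_j+1\) and \(m=m_j\), and the second with \(b=\alpha_h-\alpha_i\) and \(n=n_h\). The \((-1)^k\) from the third identity cancels the \((-1)^k\) already present in \eqref{eq:kcoeficientAn}. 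The \(k\)-th term then becomes the prefactor times
\begin{align*}
\frac{(-n_i+1)_k\prod_j(\alpha_i+\beta_j+m_j+1)_k\prod_{h\ne i}(\alpha_i-\alpha_h-n_h+1)_k}{\prod_j(\alpha_i+\beta_j+1)_k\prod_{h\ne i}(\alpha_i-\alpha_h+1)_k}\,\frac{1}{k!}.
\end{align*}
This is exactly the general term of the \({}_{p+q}F_{p+q-1}\) series in \eqref{PinheiroI}, which terminates at \(k=n_i-1\).

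The main obstacle is not conceptual. It is the careful sign bookkeeping: the clockwise orientation, the sign of the vanishing factor, and the reflection identity for \(1/(b-k)_n\). A secondary point is checking that no lower Pochhammer symbol vanishes for \(0\le k\le n_i-1\). For the \((\alpha_i-\alpha_h+1)_k\) this follows from \(\alpha_i-\alpha_h\notin\mathbb Z\). For the \((\alpha_i+\beta_j+1)_k\), note that \(\alpha_i+\beta_j+1+r>0\) for all \(r\ge0\) by the integrability hypothesis \(\alpha_i+\beta_j>-1\), so these never vanish. The prefactor is nonzero for the same reasons.
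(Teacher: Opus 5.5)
Your proposal is correct and follows the paper's own route: you evaluate the clockwise residues at $t=\alpha_i+k$ to obtain \eqref{eq:kcoeficientAn}, then rewrite each coefficient with the same three Pochhammer identities to get the terminating ${}_{p+q}F_{p+q-1}$ series. You also check explicitly that the poles are simple, track the signs, and verify that the lower parameters do not vanish, which fills in details the paper leaves implicit.
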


\begin{proof}
Write
\(
\displaystyle
 A_{\vec{n},\vec{m}}^{(i)} (x)=\sum_{k=0}^{n_i-1} A_{\vec{n},\vec{m}}^{(i)}[k]x^k \).
The clockwise residues at \(t=\alpha_i+k\) give
\eqref{eq:kcoeficientAn}; consequently,
 \begin{align}
 \label{eq:prep}
 A_{\vec{n},\vec{m}}^{(i)} (x)=\sum_{k=0}^{n_i-1} \frac{\prod_{j=1}^q (\alpha_i+k+\beta_j+1)_{m_j}}{\prod_{j=1, j\ne i}^p (\alpha_j-\alpha_i-k)_{n_j} k! (n_i-k-1)! (-1)^{k}}
x^k
 \end{align}
Using the identities
 \begin{align*}
 \frac{1}{(n_i-k-1)! (-1)^k} &= \frac{(-n_i+1)_k}{(n_i-1)!},\\
(\alpha_i+k+\beta_j+1)_{m_j} &= \frac{(\alpha_i+\beta_j+1)_{m_j+k}}{(\alpha_i+\beta_j+1)_{k}}
= \frac{(\alpha_i+\beta_j+1)_{m_j}(\alpha_i+\beta_j+1+m_j)_{k}}{(\alpha_i+\beta_j+1)_{k}} ,\\
\frac{1}{(\alpha_j-\alpha_i-k)_{n_j}} &= \frac{(-1)^{n_j}}{(-\alpha_j+\alpha_i+k-n_j+1)_{n_j}} = \frac{(-1)^{n_j}(-\alpha_j+\alpha_i-n_j+1)_{k} }{(-\alpha_j+\alpha_i-n_j+1)_{n_j+k}} \\
&= \frac{(-1)^{n_j}(-\alpha_j+\alpha_i-n_j+1)_{k} }{(-\alpha_j+\alpha_i-n_j+1)_{n_j} (-\alpha_j+\alpha_i+1)_{k}} = \frac{(-\alpha_j+\alpha_i-n_j+1)_{k} }{(\alpha_j-\alpha_i)_{n_j} (-\alpha_j+\alpha_i+1)_{k}} .
 \end{align*}
Equation \eqref{eq:prep} becomes
 \begin{align*}
 A_{\vec{n},\vec{m}}^{(i)} (x)=
 \frac{\prod_{j=1}^q (\alpha_i+\beta_j+1)_{m_j}}
 {(n_i-1)!\prod_{j\ne i}(\alpha_j-\alpha_i)_{n_j}}\sum_{k=0}^{\infty}
 \frac{(-n_i+1)_k
 \prod_{j=1}^q(\alpha_i+\beta_j+1+m_j)_k
 \prod_{j\ne i}(\alpha_i-\alpha_j-n_j+1)_k}
 {\prod_{j=1}^q(\alpha_i+\beta_j+1)_k
 \prod_{j\ne i}(\alpha_i-\alpha_j+1)_k}
 \frac{x^k}{k!},
 \end{align*}
which is \eqref{PinheiroI}.
 \end{proof}

The same contour representation also gives all mixed moments of the type-I
form.
 \begin{pro}
For \(s\in\{1,\ldots,q\}\), these moments are
\begin{align}
 \label{eq:momentosgeneralizadosAn}
 \int_0^1 x^{k} x^{\beta_s} A_{\vec{n},\vec{m}} (x)\,\d x= \frac{ \prod_{j=1}^q (\beta_j-\beta_{s} -k )_{m_j}}{\prod_{i=1}^p (\alpha_i+ \beta_{s} +k +1)_{n_i}},\qquad k\in\mathbb N_0.
 \end{align}
\end{pro}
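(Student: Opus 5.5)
Insert the contour representation \eqref{eq:contourPinheiro} into the moment and exchange the finite contour integral with the integral over \([0,1]\). By the residue theorem the form is a finite combination of the powers \(x^{\alpha_i+k'}\). Hence the exchange only needs integrability of each term, \(\alpha_i+\beta_s+k+k'>-1\), which holds by the standing assumptions. For \(\Re t\) near the poles we have
\begin{align*}
 \int_0^1 x^{t+\beta_s+k}\,\d x=\frac{1}{t+\beta_s+k+1},
\end{align*}
and this gives
\begin{align*}
 \int_0^1 x^{k+\beta_s}A_{\vec n,\vec m}(x)\,\d x
 =\int_\Sigma \frac{\prod_{j=1}^q(t+\beta_j+1)_{m_j}}{\prod_{i=1}^p(\alpha_i-t)_{n_i}}\,\frac{1}{t+\beta_s+k+1}\,\frac{\d t}{2\pi\mathrm i}.
\end{align*}
The contour \(\Sigma\) can be taken so that it does not enclose \(-\beta_s-k-1\). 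This point differs from every \(\alpha_i+k'\) because \(\alpha_i+\beta_s>-1\).

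Next apply the global residue theorem, as in the proof of the contour proposition. Since \(|\vec m|=|\vec n|-1\), the integrand is \(O(t^{-2})\) at infinity, so its residue at infinity vanishes. Therefore the sum of the residues inside \(\Sigma\) equals minus the residue at the single remaining pole \(t=-\beta_s-k-1\). With the clockwise convention,
\begin{align*}
 \frac{1}{2\pi\mathrm i}\int_\Sigma f\,\d t=-\sum_{\text{inside}}\operatorname{Res}f=\operatorname{Res}_{t=-\beta_s-k-1}f.
\end{align*}
Evaluating this residue gives
\begin{align*}
 \frac{\prod_j(-\beta_s-k+\beta_j)_{m_j}}{\prod_i(\alpha_i+\beta_s+k+1)_{n_i}},
\end{align*}
which is exactly \eqref{eq:momentosgeneralizadosAn}. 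The case where the pole at \(-\beta_s-k-1\) is cancelled by a numerator factor is consistent: the residue is then zero, and so is the factor \((\beta_s-\beta_s-k)_{m_s}\) for \(k<m_s\). This recovers the orthogonality conditions.

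The main point to check is the pole bookkeeping. We must confirm that \(-\beta_s-k-1\) never coincides with an enclosed point \(\alpha_i+k'\); this follows from \(\alpha_i+\beta_s+k+k'+1>0\). We must also confirm that the normalization implicit in the formula is exactly the one fixed by \eqref{eq:contourPinheiro}, with no extra constant, which is immediate since we use that representation literally. The remaining issue is the sign of the residue, which is handled by the stated clockwise convention.
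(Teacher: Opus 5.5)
Your proof is correct and follows the paper's argument. You insert the contour representation, use the $O(t^{-2})$ decay to kill the residue at infinity, and read off the single remaining pole $t=-(\beta_s+k+1)$ with the clockwise sign convention; beyond the paper, you also handle $k<m_s$ and $k\ge m_s$ uniformly and spell out the integrability and pole-separation checks ($\alpha_i+\beta_s>-1$) that the paper leaves implicit.
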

\begin{proof}
Using \eqref{eq:contourPinheiro}, we obtain
 \begin{align*}
\int_0^1 x^{k} x^{ \beta_{s} } A_{\vec{n},\vec{m}} (x) =
 \int_{\Sigma} \frac{ \prod_{j=1}^q (t+\beta_j+1)_{ m_j } }{ \prod_{i=1}^p (\alpha_i-t)_{n_i } }
\frac{1}{k +t +\beta_{s} +1}\frac{\d t}{ 2 \pi \mathrm i } .
 \end{align*}
The rational integrand is \(O(t^{-2})\) at infinity. If \(k<m_s\), the
factor \(t+\beta_s+k+1\) cancels and both sides vanish. If \(k\ge m_s\),
evaluation from the exterior of the clockwise contour leaves the single pole
\(t=-(k+\beta_s+1)\); its~residue is the quotient displayed above.
 \end{proof}

For \(|\vec m|=|\vec n|+1\), write the type-II linear form as
\(\displaystyle
B_{\vec{n}, \vec{m}} (x) = \sum_{j=1}^q B^{(j)}_{\vec{n}, \vec{m}} (x) x^{\beta_j} \).
It satisfies
\begin{align*}
\int_0^1B_{\vec n,\vec m}(x)x^{k+\alpha_i}\,\d x=0,
&& 0\le k\le n_i-1, && 1\le i\le p.
\end{align*}
Interchanging the two parameter families gives the dual contour
representation.

\begin{pro}
Up to a nonzero constant, the type-II form has the contour representation
\begin{align}
\label{eq:contourPinheiroB}
B_{\vec{n},\vec{m}} (x) = \int_{\Sigma} \frac{x^t \prod_{i=1}^p (t+\alpha_i+1)_{n_i}}{\prod_{j=1}^q (\beta_j-t)_{m_j}} \frac{dt}{ 2 \pi i }
\end{align}
where \(\Sigma\) is a clockwise contour enclosing once every pole
\(\beta_j,\beta_j+1,\ldots,\beta_j+m_j-1\).
\end{pro}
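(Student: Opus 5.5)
The plan mirrors the proof of the type-I contour representation \eqref{eq:contourPinheiro}, with the roles of \((\vec\alpha,\vec n)\) and \((\vec\beta,\vec m)\) interchanged.

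\emph{Step 1: the integral is a type-II form.} Applying the residue theorem to the right-hand side of \eqref{eq:contourPinheiroB}, the only poles inside \(\Sigma\) are the simple poles \(t=\beta_j+k\), with \(0\le k\le m_j-1\). These poles are simple and pairwise distinct because \(\beta_j-\beta_s\notin\mathbb Z\) for \(j\neq s\). Hence the integral is a finite linear combination of the functions \(x^{\beta_j+k}\). This gives a form \(\sum_j B^{(j)}(x)x^{\beta_j}\) with \(\deg B^{(j)}\le m_j-1\). Since the sum is finite, the Mellin transform can be taken term by term, that is, by interchanging \(\int_0^1\) with \(\int_\Sigma\). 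We need integrability near \(0\). For \(s=\alpha_i+h\) with \(1\le h\le n_i\), each term \(x^{\beta_j+k}x^{s-1}\) is integrable because \(\alpha_i+\beta_j>-1\). The resulting integral is
\[
\int_0^1 B_{\vec n,\vec m}(x)x^{\alpha_i+h-1}\,\d x=\int_\Sigma \frac{\prod_{r=1}^p(t+\alpha_r+1)_{n_r}}{\prod_{j=1}^q(\beta_j-t)_{m_j}}\,\frac{1}{t+\alpha_i+h}\,\frac{\d t}{2\pi\mathrm i}.
\]
Here we need \(\Re(t+\alpha_i+h)>0\) on \(\Sigma\). This holds if \(\Sigma\) is taken as a tight contour around the \(\beta_j+k\), because \(\beta_j+\alpha_i+h>0\).

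\emph{Step 2: orthogonality.} The factor \(t+\alpha_i+h\), with \(1\le h\le n_i\), appears in \((t+\alpha_i+1)_{n_i}\), so it cancels. After cancellation the numerator has degree \(|\vec n|-1\) and the denominator has degree \(|\vec m|=|\vec n|+1\). The integrand is therefore \(O(t^{-2})\) at infinity, and its residue at \(\infty\) vanishes. Its finite poles all lie inside \(\Sigma\). By the global residue theorem together with the clockwise convention, the integral is zero. This gives exactly the type-II conditions for \(0\le k=h-1\le n_i-1\).

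\emph{Step 3: nontriviality and uniqueness.} Take \(j\) with \(m_j\ge1\). The residue at \(t=\beta_j\) equals \(\prod_i(\beta_j+\alpha_i+1)_{n_i}\) divided by a product of Pochhammer symbols, with a sign. The numerator is nonzero because \(\alpha_i+\beta_j>-1\). The denominator is finite and nonzero because \(\beta_s-\beta_j\notin\mathbb Z\). So the coefficient of \(x^{\beta_j}\) is nonzero and the form is not identically zero. Perfectness, that is, normality of every admissible index \cite{Ulises-Sergio-Judith}, says the type-II conditions determine the form up to a constant. This identifies the integral with \(B_{\vec n,\vec m}\).

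The main point needing care is Step 1: justifying the interchange of integrals and checking that, after cancellation, no pole other than the enclosed ones survives. Both follow from the standing assumptions on the parameters, exactly as in the type-I case.
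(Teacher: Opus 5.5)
Your proposal is correct and follows the paper's route. The paper proves this by saying it is the type-I argument with \((\vec\alpha,\vec n)\) and \((\vec\beta,\vec m)\) interchanged, and your Steps 1--3 carry out exactly that dualized argument: the residue expansion into \(x^{\beta_j+k}\), the termwise Mellin transform, cancellation of \(t+\alpha_i+h\), the vanishing residue at infinity from \(|\vec m|=|\vec n|+1\), the nonzero residue at \(t=\beta_j\), and identification by perfectness.
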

\begin{proof}
This is the type-I argument above with
\((\vec\alpha,\vec n)\) and \((\vec\beta,\vec m)\) interchanged.
\end{proof}
Evaluating the residues of the dual contour integral yields the type-II
components explicitly.
\begin{coro}
For every \(j\in\{1,\ldots,q\}\) with \(m_j\ge1\), the integral
representation gives the following explicit formula, up to a common nonzero
constant:
 \begin{multline}
	\label{PinheiroII}
		B^{(j)}_{\vec{n},\vec{m}}(x)=\dfrac{\prod_{i=1}^p(\alpha_i+\beta_j+1)_{n_i}}{(m_j-1)!\prod_{i=1,i\neq j}^q({\beta_i-\beta_j})_{m_i}}\\
	\times\pFq{p+q}{p+q-1}{-m_j+1,\,\{\alpha_k+\beta_j+n_k+1\}_{k=1}^p,\,\{\beta_j-\beta_k-m_k+1\}_{k=1,k\neq j}^q}{\{\alpha_k+\beta_j+1\}_{k=1}^p,\,\{\beta_j-\beta_k+1\}_{k=1,k\neq j}^q}{x}.
\end{multline}
The individual coefficients are
 \begin{align}
 \label{eq:kcoeficientBn}
 B_{\vec{n},\vec{m}}^{(j)}[k] & = \frac{\prod_{i=1}^p (\alpha_i+k+\beta_j+1)_{n_i}}{\prod_{i=1, i\ne j}^q (\beta_i-\beta_j-k)_{m_i} k! (m_j-k-1)! (-1)^{k}} , \\
\label{eq:LCB}
\operatorname{LC} \left(B_{\vec{n},\vec{m}}^{(j)}\right) &= B_{\vec{n},\vec{m}}^{(j)}[m_j-1] = \frac{\prod_{i=1}^p (\beta_j+m_j+\alpha_i)_{n_i}}{\prod_{i=1, i\ne j}^q (\beta_i-\beta_j-m_j+1)_{m_i} (m_j-1)! (-1)^{m_j-1}} .
\end{align}
\end{coro}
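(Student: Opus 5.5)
The plan mirrors the proof of the type-I corollary, with \((\vec\alpha,\vec n)\) and \((\vec\beta,\vec m)\) interchanged. The residues are read off from the contour representation \eqref{eq:contourPinheiroB}, and the resulting coefficients are then rewritten with Pochhammer identities.

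\emph{Residues.} The integrand of \eqref{eq:contourPinheiroB} has a simple pole at \(t=\beta_j+k\) for each \(0\le k\le m_j-1\). These poles are simple, and poles coming from different \(j\) never coincide, because \(\beta_j-\beta_s\notin\mathbb Z\). Write
\begin{align*}
 (\beta_j-t)_{m_j}=\prod_{r=0}^{m_j-1}(\beta_j+r-t).
\end{align*}
The vanishing factor is \(r=k\). The remaining factors, evaluated at \(t=\beta_j+k\), give \((-k)_k\,(1)_{m_j-k-1}=(-1)^kk!\,(m_j-k-1)!\). The residue of \(1/(\beta_j+k-t)\) is \(-1\), and the clockwise convention contributes another sign, so these two signs cancel. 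At \(t=\beta_j+k\) the numerator becomes \(x^{\beta_j+k}\prod_{i}(\alpha_i+\beta_j+k+1)_{n_i}\), and the other denominator factors become \(\prod_{i\ne j}(\beta_i-\beta_j-k)_{m_i}\). Grouping the terms with \(x^{\beta_j}\) identifies the coefficient of \(x^k\) in \(B^{(j)}_{\vec n,\vec m}\), which gives \eqref{eq:kcoeficientBn}. Setting \(k=m_j-1\) then gives \eqref{eq:LCB} immediately.

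\emph{Hypergeometric form.} Next we factor the \(k=0\) value out of the sum, using the same three identities as in the type-I case:
\begin{align*}
 \frac{1}{(m_j-k-1)!\,(-1)^k}&=\frac{(-m_j+1)_k}{(m_j-1)!},\\
 (\alpha_i+\beta_j+k+1)_{n_i}&=(\alpha_i+\beta_j+1)_{n_i}\frac{(\alpha_i+\beta_j+n_i+1)_k}{(\alpha_i+\beta_j+1)_k},\\
 \frac{1}{(\beta_i-\beta_j-k)_{m_i}}&=\frac{(\beta_j-\beta_i-m_i+1)_k}{(\beta_i-\beta_j)_{m_i}(\beta_j-\beta_i+1)_k}.
\end{align*}
Summing over \(k\) gives the \({}_{p+q}F_{p+q-1}\) in \eqref{PinheiroII}. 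The series terminates because of the upper parameter \(-m_j+1\). No lower Pochhammer symbol vanishes: \(\beta_j-\beta_i\notin\mathbb Z\), and \(\alpha_i+\beta_j>-1\) keeps \((\alpha_i+\beta_j+1)_k\) nonzero.

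The only step needing care is the sign bookkeeping in the third identity, where \((-1)^{m_i}\) must be absorbed correctly. This is the step most likely to cause an error, so I would check it against the type-I computation with the roles swapped. Everything else is a direct transcription of that computation.
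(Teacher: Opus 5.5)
Your proposal is correct and takes the paper's route: the paper proves this corollary by interchanging \((\vec\alpha,\vec n)\) and \((\vec\beta,\vec m)\) in the type-I residue computation, and you carry out that computation explicitly (clockwise residues at \(t=\beta_j+k\), followed by the three Pochhammer identities). The third identity you wrote is correct as stated, since \((-1)^{m_i}(\beta_j-\beta_i-m_i+1)_{m_i}=(\beta_i-\beta_j)_{m_i}\) absorbs the sign, so the check you flagged goes through.
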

\begin{proof}
Interchange \((\vec\alpha,\vec n)\) and \((\vec\beta,\vec m)\) in the
residue calculation used for the type-I components.
\end{proof}
The dual contour representation likewise determines all mixed moments of the
type-II form.
\begin{pro}
For \(s\in\{1,\ldots,p\}\), these moments are
\begin{align}
 \label{eq:momentosgeneralizadosBn}
 \int_0^1 x^{k} x^{\alpha_s} B_{\vec{n},\vec{m}} (x)\,\d x= \frac{ \prod_{i=1}^p (\alpha_i-\alpha_{s} -k )_{n_i}}{\prod_{j=1}^q (\beta_j+ \alpha_{s} +k +1)_{m_j}}, && k\in\mathbb N_0.
\end{align}
\end{pro}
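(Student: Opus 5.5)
The plan mirrors the proof of \eqref{eq:momentosgeneralizadosAn}, now using the dual contour representation \eqref{eq:contourPinheiroB}. Fix \(s\in\{1,\ldots,p\}\) and \(k\in\mathbb N_0\). By the residue theorem, \(B_{\vec n,\vec m}\) is a finite combination of the powers \(x^{\beta_j+l}\), \(0\le l<m_j\). Moreover, \(\alpha_s+\beta_j>-1\), so each term \(\int_0^1 x^{k+\alpha_s+\beta_j+l}\,\d x=1/(k+\alpha_s+\beta_j+l+1)\) is finite. We may therefore integrate under the contour sign and obtain
\begin{align*}
\int_0^1 x^{k}x^{\alpha_s}B_{\vec n,\vec m}(x)\,\d x
=\int_\Sigma \frac{\prod_{i=1}^p(t+\alpha_i+1)_{n_i}}{\prod_{j=1}^q(\beta_j-t)_{m_j}}\,\frac{1}{t+\alpha_s+k+1}\,\frac{\d t}{2\pi\mathrm i}.
\end{align*}
The contour must not enclose \(t=-(\alpha_s+k+1)\). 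This is automatic after a harmless deformation, since that point differs from every \(\beta_j+l\) because \(\alpha_s+\beta_j+l+k+1>0\).

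Next compare degrees. The numerator has degree \(|\vec n|\) and the denominator has degree \(|\vec m|+1=|\vec n|+2\), so the integrand is \(O(t^{-2})\) at infinity and its residue at infinity vanishes. The finite poles are the enclosed ones \(\beta_j+l\) together with the single exterior point \(t_0=-(\alpha_s+k+1)\). There are two cases.

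If \(k<n_s\), the factor \(t+\alpha_s+k+1\) appears in \((t+\alpha_s+1)_{n_s}\) and cancels. The only poles are then inside \(\Sigma\), so the integral is zero. This matches the right-hand side, because \((\alpha_s-\alpha_s-k)_{n_s}=(-k)_{n_s}=0\) in that range. This case is just the type-II orthogonality.

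If \(k\ge n_s\), the global residue theorem together with the clockwise convention gives
\begin{align*}
\frac{1}{2\pi\mathrm i}\int_\Sigma f\,\d t=-\sum_{\text{inside}}\operatorname{Res}f=\operatorname{Res}_{t=t_0}f .
\end{align*}
Evaluating at \(t_0\), each factor \((t_0+\alpha_i+1)_{n_i}\) becomes \((\alpha_i-\alpha_s-k)_{n_i}\), and each \((\beta_j-t_0)_{m_j}\) becomes \((\beta_j+\alpha_s+k+1)_{m_j}\). This yields \eqref{eq:momentosgeneralizadosBn}.

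The only delicate point is the bookkeeping of signs: the clockwise orientation must exactly offset the minus sign from the global residue theorem, so that the exterior residue enters with coefficient \(+1\), exactly as in the type-I proposition. Alternatively, the whole statement follows from \eqref{eq:momentosgeneralizadosAn} by the interchange \((\vec\alpha,\vec n)\leftrightarrow(\vec\beta,\vec m)\), which maps \eqref{eq:contourPinheiro} to \eqref{eq:contourPinheiroB}. I would present it that way and cite the computation above as the verification.
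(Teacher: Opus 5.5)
Your proof is correct and follows the paper's route. The paper proves this statement by interchanging $(\vec\alpha,\vec n)$ and $(\vec\beta,\vec m)$ in the type-I moment computation, and that computation is exactly the contour argument you carry out: the factor cancels for $k<n_s$, and for $k\ge n_s$ only the single exterior residue at $t=-(\alpha_s+k+1)$ survives. Your remarks on termwise integration and on choosing the contour so that it excludes that exterior pole supply details the paper leaves implicit.
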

\begin{proof}
This is the type-I moment calculation with the two parameter systems
interchanged.
\end{proof}
 \subsection{Step line}

For each \(n\in\mathbb N_0\), let
\(A_n:=A_{\vec n,\vec m}\), where \(|\vec n|=n+1\) and \(|\vec m|=n\),
with the contour representation \eqref{eq:contourPinheiro}. The components of
\(\vec n=(n_{1,n},n_{2,n},\ldots,n_{p,n})\) and
\(\vec m=(m_{1,n},m_{2,n},\ldots,m_{q,n})\)
are
 \begin{align}
 \label{eq:PinheiroA-ni}
n_{i,n} = \left\lceil \frac{|\vec{n}|+1-i}{p} \right\rceil = \left\lceil \frac{n+2-i}{p} \right\rceil 
&&
\text{and}
&&
m_{j,n} = \left\lceil \frac{|\vec{m}|+1-j}{q} \right\rceil = \left\lceil \frac{n+1-j}{q} \right\rceil .
 \end{align}

Write \(n=q\nu+\rho\), where \(0\leq\rho<q\), and set
\(r(n,q):=\rho\). The quotient \(\nu\) will not be used below.
We first fix the normalization of the type-I form on the step line.
 \begin{pro}
Impose the normalization
 \begin{align}
 \label{eq:normalization}
 \int_0^1 x^{m_{r(n,q)+1,n}} x^{ \beta_{ r(n,q)+1} } A_n(x)\,\d x = 1 .
 \end{align}
Then
\begin{align*}
A_{n}(x)=
\frac{
\prod_{i=1}^p (
\alpha_i+ \beta_{r(n,q)+1} + m_{r(n,q)+1,n}+1
)_{ n_{i,n} }
}{
\prod_{j=1}^q (
\beta_j-\beta_{r(n,q)+1}-m_{r(n,q)+1,n}
)_{m_{j,n}}
} \int_{\Sigma}
\frac{x^t \prod_{j=1}^q (t+\beta_j+1)_{ m_{j,n} } }{\prod_{i=1}^p (\alpha_i-t)_{n_{i,n} } } \frac{dt}{ 2 \pi i } ,
\end{align*}
 where \( n_{i,n}\) and \( m_{j,n}\) are given by \eqref{eq:PinheiroA-ni}.
\end{pro}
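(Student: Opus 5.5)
The plan is to write \(A_n=c_n\,\widetilde A_n\), where \(\widetilde A_n\) is the unnormalized contour integral \eqref{eq:contourPinheiro} taken with the step-line indices \eqref{eq:PinheiroA-ni}. By perfectness, this form is unique up to the constant \(c_n\). I will fix \(c_n\) from \eqref{eq:normalization}, and for that I will use the moment formula \eqref{eq:momentosgeneralizadosAn}.

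Set \(s=r(n,q)+1\) and \(k=m_{s,n}\). Then \eqref{eq:momentosgeneralizadosAn} gives
\begin{align*}
\int_0^1 x^{m_{s,n}}x^{\beta_s}\widetilde A_n(x)\,\d x
=\frac{\prod_{j=1}^q(\beta_j-\beta_s-m_{s,n})_{m_{j,n}}}{\prod_{i=1}^p(\alpha_i+\beta_s+m_{s,n}+1)_{n_{i,n}}}.
\end{align*}
This already has the correct index \(k=m_{s,n}\ge m_{s,n}\), which is the nonvanishing branch in the proof of that moment proposition. Imposing \eqref{eq:normalization} then forces \(c_n\) to be the reciprocal of this quotient. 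That reciprocal is exactly the prefactor in the statement.

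It remains to check that the quotient is finite and nonzero, so that the inversion is legitimate.

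\emph{Denominator.} The factors \(\alpha_i+\beta_s+m_{s,n}+1+r\) are all positive, since \(\alpha_i+\beta_s>-1\).

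\emph{Numerator, \(j\neq s\).} The factors are \(\beta_j-\beta_s-m_{s,n}+r\). They are nonzero because \(\beta_j-\beta_s\notin\mathbb Z\).

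\emph{Numerator, \(j=s\).} The factor is \((-m_{s,n})_{m_{s,n}}=(-1)^{m_{s,n}}m_{s,n}!\), which is nonzero by the Remark on \((-n)_m\).

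This is also consistent with \eqref{NormISL}. There the index \(l\) with \(|\vec m|=n=qr+l\) is exactly \(r(n,q)\), and \(r=\lfloor n/q\rfloor\). One small point to confirm is that the exponent \(m_{r(n,q)+1,n}\) in \eqref{eq:normalization} agrees with \(r\) in \eqref{NormISL}. From \eqref{eq:PinheiroA-ni}, \(m_{l+1,n}=\lceil (qr+l-l)/q\rceil=r\), so the two agree. In any case, the argument above does not depend on this matching.

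The only step I regard as delicate is this nonvanishing check; the remainder is direct substitution.
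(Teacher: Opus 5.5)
Your proposal is correct and follows the paper's proof: set \(s=r(n,q)+1\) and \(k=m_{s,n}\) in \eqref{eq:momentosgeneralizadosAn}, and take the reciprocal of the resulting moment as the normalizing constant. Your explicit check that this moment is finite and nonzero is accurate and fills in a point the paper leaves implicit; it uses the positivity of the factors \(\alpha_i+\beta_s+m_{s,n}+1+\cdot\), the hypothesis \(\beta_j-\beta_s\notin\mathbb Z\), and \((-m_{s,n})_{m_{s,n}}=(-1)^{m_{s,n}}m_{s,n}!\).
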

\begin{proof}
Set \(s=r(n,q)+1\) and \(k=m_{s,n}\) in
\eqref{eq:momentosgeneralizadosAn}. The displayed prefactor is exactly the
reciprocal of that moment, so the normalized moment equals \(1\).
\end{proof}

Substitution of the step-line multi-indices into the type-I formula gives the
normalized components.
 \begin{coro}
 For every \(i\in\{1,\ldots,p\}\) with \(n_{i,n}\ge1\), we obtain the explicit representation of the mixed polynomial~\( A_{n}^{(i)}(x) \), with the normalization condition \eqref{eq:normalization}
 \begin{multline*}
 A_{n}^{(i)}(x) = \frac{\prod_{j=1}^p (\alpha_j+ \beta_{r(n,q)+1} + m_{r(n,q)+1,n}+1)_{n_{j,n}}}{\prod_{j=1}^q (\beta_j-\beta_{r(n,q)+1}-m_{r(n,q)+1,n})_{m_{j,n}}} \frac{\prod_{j=1}^q (\alpha_i+\beta_j+1)_{m_{j,n}}}{(n_{i,n}-1)! \prod_{j=1, j \ne i}^p (\alpha_j-\alpha_i)_{n_{j,n}}} \\
\times
 \pFq{p+q}{p+q-1}{-n_{i,n}+1,\,\{\alpha_i+\beta_k+m_{k,n}+1\}_{k=1}^q,\,\{\alpha_i-\alpha_k-n_{k,n}+1\}_{k=1,k\neq i}^p}{\{\alpha_i+\beta_k+1\}_{k=1}^q,\,\{\alpha_i-\alpha_k+1\}_{k=1,k\neq i}^p}{x} .
\end{multline*}
If \(n_{i,n}=0\), then \(A_n^{(i)}\equiv0\). Here
\(n_{i,n}\) and \(m_{j,n}\) are given by \eqref{eq:PinheiroA-ni}.
\end{coro}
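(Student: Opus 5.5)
The plan is to combine the normalized contour representation of \(A_n\) with the componentwise residue computation. The normalized step-line proposition writes \(A_n\) as a scalar prefactor times the contour integral \eqref{eq:contourPinheiro}, evaluated at the step-line multi-indices \(\vec n=(n_{1,n},\ldots,n_{p,n})\) and \(\vec m=(m_{1,n},\ldots,m_{q,n})\) of \eqref{eq:PinheiroA-ni}. The prefactor is the reciprocal of the moment \eqref{eq:momentosgeneralizadosAn} with \(s=r(n,q)+1\) and \(k=m_{s,n}\).

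First I check the hypotheses of the general corollary giving \eqref{PinheiroI}. The step-line indices satisfy \(|\vec n|=n+1=|\vec m|+1\), which is exactly the relation \(|\vec m|=|\vec n|-1\) assumed there. That corollary is obtained from the contour integral, unmultiplied, by taking the clockwise residues at \(t=\alpha_i+k\), \(0\le k\le n_{i,n}-1\); this produces \eqref{eq:kcoeficientAn} and then the hypergeometric rewriting. The phrase ``up to a common nonzero constant'' there means precisely that the contour integral \eqref{eq:contourPinheiro} equals \(\sum_i A^{(i)}_{\vec n,\vec m}(x)x^{\alpha_i}\) with the components given literally by \eqref{PinheiroI}. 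I would state this identification explicitly: the residue calculation in that proof uses no extra normalization.

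Next, the \(i\)-th component of \(A_n\) is the prefactor times the \(i\)-th component of the contour integral. This holds because the functions \(x^{\alpha_i+k}\) are linearly independent, since \(\alpha_i-\alpha_h\notin\mathbb Z\), so the decomposition of the form into components is unique. Substituting \(n_i\mapsto n_{i,n}\) and \(m_j\mapsto m_{j,n}\) in \eqref{PinheiroI} and multiplying by the prefactor gives the displayed formula. In the prefactor the numerator product runs over the \(p\) parameters \(\alpha_j\) and the denominator over the \(q\) parameters \(\beta_j\), as in the normalization proposition.

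The nonvanishing of the prefactor's denominator needs a brief check. For \(j\ne r(n,q)+1\), the factor \((\beta_j-\beta_{s}-m_{s,n})_{m_{j,n}}\) is nonzero because \(\beta_j-\beta_s\notin\mathbb Z\). For \(j=s\), the factor equals \((-m_{s,n})_{m_{s,n}}=(-1)^{m_{s,n}}m_{s,n}!\neq0\). The Pochhammers \((\alpha_j-\alpha_i)_{n_{j,n}}\) are nonzero by the same non-integrality assumption, and the hypergeometric lower parameters never hit a nonpositive integer before the series terminates. Finally, if \(n_{i,n}=0\), the integrand has no poles at \(\alpha_i+k\), so the contour picks up no contribution and \(A_n^{(i)}\equiv0\), consistent with the standing convention.

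The only real obstacle is bookkeeping: the normalized proposition and \eqref{PinheiroI} use the same constant convention, with no hidden sign from the clockwise orientation. Since both come from the same clockwise residue formula, I expect them to be consistent, and I would confirm this by comparing the \(k=0\) coefficient \eqref{eq:kcoeficientAn} with minus the residue at \(t=\alpha_i\).
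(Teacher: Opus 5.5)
Your proposal is correct and follows the paper's route. The paper obtains this corollary by substituting the step-line indices \eqref{eq:PinheiroA-ni} into \eqref{PinheiroI} and multiplying by the normalizing prefactor from the preceding proposition, which is the reciprocal of the moment \eqref{eq:momentosgeneralizadosAn} with $s=r(n,q)+1$ and $k=m_{s,n}$. Your additional checks are sound (uniqueness of the component decomposition, nonvanishing of the Pochhammer denominators, and sign consistency of the clockwise residues, which indeed reproduce \eqref{eq:kcoeficientAn} with no extra sign) and simply make explicit what the paper leaves implicit.
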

Expanding the normalized components gives their scalar coefficients.
\begin{coro}
For \(n_{i,n}\ge1\),
\(
A_{n}^{(i)}(x) = \sum_{k=0}^{n_{i,n}-1} A_{n}^{(i)}[k] x^k \)
where
\begin{align}
\label{PinheiroCoefiStepAn}
A_{n}^{(i)}[k] =
\frac{\prod_{j=1}^p (\alpha_j+ \beta_{r(n,q)+1} + m_{r(n,q)+1,n}+1)_{n_{j,n}}}{\prod_{j=1}^q (\beta_j-\beta_{r(n,q)+1}-m_{r(n,q)+1,n})_{m_{j,n}}} \frac{\prod_{j=1}^q (\alpha_i+k+\beta_j+1)_{m_{j,n}}}{\prod_{j=1, j\ne i}^p (\alpha_j-\alpha_i-k)_{n_{j,n}} k! (n_{i,n}-k-1)! (-1)^{k}}
\end{align}
and
\( n_{i,n}\) and \( m_{j,n}\)
given by \eqref{eq:PinheiroA-ni}.
\end{coro}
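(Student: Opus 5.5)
The coefficient formula \eqref{PinheiroCoefiStepAn} follows by combining two earlier results. The first is the normalized step-line representation of \(A_n\) from the preceding proposition: a scalar prefactor times the contour integral \eqref{eq:contourPinheiro}, evaluated at the step-line multi-indices \eqref{eq:PinheiroA-ni}. The second is the residue computation behind \eqref{eq:kcoeficientAn}. The plan is to expand the contour integral by residues for the general multi-index \((\vec n,\vec m)\), then specialize to \(n_i=n_{i,n}\), \(m_j=m_{j,n}\), and multiply by the normalization constant.

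First, for fixed \(i\) with \(n_{i,n}\ge1\), I compute the clockwise residue of
\[
 \frac{x^t\prod_{j=1}^q(t+\beta_j+1)_{m_{j,n}}}{\prod_{h=1}^p(\alpha_h-t)_{n_{h,n}}}
\]
at \(t=\alpha_i+k\), for \(0\le k\le n_{i,n}-1\). These poles are simple, because \(\alpha_i-\alpha_h\notin\mathbb Z\) for \(h\ne i\).

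\begin{itemize}
\item The factor \((\alpha_i-t)_{n_{i,n}}=\prod_{r=0}^{n_{i,n}-1}(\alpha_i-t+r)\) vanishes at \(r=k\).
\item Near the pole, the factor \(r=k\) equals \(-(t-\alpha_i-k)\).
\item The remaining factors at \(t=\alpha_i+k\) give
\[
 \prod_{r<k}(r-k)\prod_{r>k}(r-k)=(-1)^k k!\,(n_{i,n}-k-1)!.
\]
\end{itemize}

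The minus sign from the linear factor cancels the minus sign of the clockwise convention. So the contribution is
\[
 x^{\alpha_i}x^k\,\frac{\prod_{j}(\alpha_i+k+\beta_j+1)_{m_{j,n}}}{\prod_{h\ne i}(\alpha_h-\alpha_i-k)_{n_{h,n}}\,k!\,(n_{i,n}-k-1)!\,(-1)^k}.
\]

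Second, I collect terms. The poles at \(\alpha_h+k'\) with \(h\ne i\) produce powers \(x^{\alpha_h+k'}\). Since \(\alpha_h-\alpha_i\notin\mathbb Z\), these never coincide with \(x^{\alpha_i+k}\). Hence the coefficient of \(x^{\alpha_i+k}\) in \(A_n\) is exactly the coefficient \(A_n^{(i)}[k]\) of the \(i\)-th component. This is precisely \eqref{eq:kcoeficientAn} with the step-line indices.

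Multiplying by the prefactor from the normalization proposition then gives \eqref{PinheiroCoefiStepAn}. The prefactor is nonzero and finite under the standing assumptions, because it is the reciprocal of the moment \eqref{eq:momentosgeneralizadosAn} with \(k=m_{r(n,q)+1,n}\). As a consistency check, one could resum these coefficients using the Pochhammer identities from the proof of \eqref{PinheiroI}. This recovers the hypergeometric form in the preceding corollary, confirming that both normalizations agree.

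The only delicate point is the sign bookkeeping in the residue. I must check that the clockwise orientation and the sign of the vanishing linear factor combine into the stated \((-1)^k\) with no stray global sign. I would verify this on the case \(p=q=1\), \(n_{1,n}=1\), comparing directly with the normalization \eqref{eq:normalization}.
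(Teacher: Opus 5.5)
Your proposal is correct and follows the paper's own route. The coefficients are the clockwise residues at \(t=\alpha_i+k\), that is, \eqref{eq:kcoeficientAn} evaluated at the step-line indices \eqref{eq:PinheiroA-ni} and multiplied by the normalization prefactor from the preceding proposition. Your sign bookkeeping is right: the \(-1\) from the vanishing linear factor cancels the clockwise \(-1\), leaving \((-1)^k k!\,(n_{i,n}-k-1)!\). You also make explicit two points the paper leaves implicit: since \(\alpha_h-\alpha_i\notin\mathbb Z\), the coefficient of \(x^{\alpha_i+k}\) is unambiguously \(A_n^{(i)}[k]\), and the prefactor is finite and nonzero.
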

The last coefficient in the preceding expansion gives the corresponding
leading coefficient.
\begin{coro}

For \(n_{i,n}\ge1\),
 \begin{align*}
\operatorname{LC}
A_{n}^{(i)}
= (-1)^{n_{i,n}-1}\frac{\prod_{j=1}^p (\alpha_{j}+ \beta_{r(n,q)+1} + m_{r(n,q)+1,n}+1)_{n_{j,n}}}{\prod_{j=1}^q (\beta_j-\beta_{r(n,q)+1}-m_{r(n,q)+1,n})_{m_{j,n}}} \frac{\prod_{j=1}^q (\alpha_i+n_{i,n}+\beta_j)_{m_{j,n}}}{\prod_{j=1, j\ne i}^p (\alpha_j-\alpha_i-n_{i,n}+1)_{n_{j,n}} (n_{i,n}-1)! }
\end{align*}
where \( n_{i,n}\) and \( m_{j,n}\) are given by \eqref{eq:PinheiroA-ni}.
\end{coro}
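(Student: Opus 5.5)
The plan is to read off the top coefficient from the explicit coefficient formula \eqref{PinheiroCoefiStepAn} of the preceding corollary. That formula gives every coefficient \(A_n^{(i)}[k]\), \(0\le k\le n_{i,n}-1\), of the normalized step-line component. By the normality discussion (AT systems, perfectness from \cite{Ulises-Sergio-Judith}), \(\deg A_n^{(i)}=n_{i,n}-1\) whenever \(n_{i,n}\ge1\). Hence \(\operatorname{LC}A_n^{(i)}=A_n^{(i)}[n_{i,n}-1]\), and it suffices to set \(k=n_{i,n}-1\) in \eqref{PinheiroCoefiStepAn}.

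The substitution is carried out factor by factor. The normalization prefactor, namely the ratio of \(\prod_{j=1}^p(\alpha_j+\beta_{r(n,q)+1}+m_{r(n,q)+1,n}+1)_{n_{j,n}}\) to \(\prod_{j=1}^q(\beta_j-\beta_{r(n,q)+1}-m_{r(n,q)+1,n})_{m_{j,n}}\), does not depend on \(k\) and is carried over unchanged. In the remaining factor we have \((\alpha_i+k+\beta_j+1)_{m_{j,n}}\) becoming \((\alpha_i+n_{i,n}+\beta_j)_{m_{j,n}}\), and \((\alpha_j-\alpha_i-k)_{n_{j,n}}\) becoming \((\alpha_j-\alpha_i-n_{i,n}+1)_{n_{j,n}}\). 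Also \(k!\,(n_{i,n}-k-1)!\) collapses to \((n_{i,n}-1)!\,0!=(n_{i,n}-1)!\), and \((-1)^{k}\) in the denominator becomes \((-1)^{n_{i,n}-1}\). Since this sign equals its own reciprocal, it can be moved to the front as the factor \((-1)^{n_{i,n}-1}\), which gives exactly the stated expression.

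As a consistency check, the result agrees with the unnormalized leading coefficient in the corollary after \eqref{eq:kcoeficientAn}, multiplied by the normalization constant of the proposition fixing \eqref{eq:normalization}. The one step that needs care is nonvanishing. The assumptions \(\alpha_i-\alpha_h\notin\mathbb Z\) and \(\beta_j-\beta_s\notin\mathbb Z\), together with the integrability conditions, ensure that no Pochhammer symbol in a denominator vanishes. We only need to confirm that \((\beta_j-\beta_{r(n,q)+1}-m_{r(n,q)+1,n})_{m_{j,n}}\) is nonzero for \(j=r(n,q)+1\). There it equals \((-m_{s,n})_{m_{s,n}}=(-1)^{m_{s,n}}m_{s,n}!\ne0\), so the formula is well defined.
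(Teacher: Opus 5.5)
Your proposal is correct and follows the paper's route exactly: the paper also obtains the leading coefficient by setting \(k=n_{i,n}-1\) in the normalized coefficient formula \eqref{PinheiroCoefiStepAn}. Your factor-by-factor substitution and the extra check that no denominator vanishes (including \((-m_{s,n})_{m_{s,n}}=(-1)^{m_{s,n}}m_{s,n}!\neq0\)) are accurate supplements to that one-line argument.
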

The same normalization also yields a closed expression for every mixed
moment.
\begin{coro}
For \(s\in\{1,\ldots,q\}\), the moments of \(A_n\) are
\begin{align}
\label{eq:normAn}
 \int_0^1 x^{k} x^{\beta_s} A_{n} (x)\,\d x= \frac{\prod_{j=1}^p (\alpha_j+ \beta_{r(n,q)+1} + m_{r(n,q)+1,n}+1)_{n_{j,n}}}{\prod_{j=1}^q (\beta_j-\beta_{r(n,q)+1}-m_{r(n,q)+1,n})_{m_{j,n}}} \frac{ \prod_{j=1}^q (\beta_j-\beta_{s} -k )_{m_{j,n}}}{\prod_{i=1}^p (\alpha_i+ \beta_{s} +k +1)_{n_{i,n}}}, \ k\in\mathbb N_0,
 \end{align}
where \( n_{i,n}\) and \( m_{j,n}\) are given by \eqref{eq:PinheiroA-ni}.
\end{coro}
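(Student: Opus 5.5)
The moment formula follows from the normalized representation of \(A_n\) in the preceding proposition and the general mixed-moment formula \eqref{eq:momentosgeneralizadosAn}. The normalized step-line form is a constant multiple of the contour integral \eqref{eq:contourPinheiro}, evaluated at the step-line multi-indices \(\vec n=(n_{1,n},\ldots,n_{p,n})\) and \(\vec m=(m_{1,n},\ldots,m_{q,n})\) of \eqref{eq:PinheiroA-ni}. The constant is
\begin{align*}
 C_n:=\frac{\prod_{j=1}^p (\alpha_j+ \beta_{r(n,q)+1} + m_{r(n,q)+1,n}+1)_{n_{j,n}}}{\prod_{j=1}^q (\beta_j-\beta_{r(n,q)+1}-m_{r(n,q)+1,n})_{m_{j,n}}}.
\end{align*}
Integration against \(x^{k+\beta_s}\) over \([0,1]\) is linear. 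Hence the moment of \(A_n\) is \(C_n\) times the moment of the unnormalized contour form.

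The steps are as follows. First, recall that the unnormalized contour integral is a finite combination of the powers \(x^{\alpha_i+\ell}\). Its moments are therefore finite under the standing hypotheses \(\alpha_i+\beta_s>-1\). Second, apply \eqref{eq:momentosgeneralizadosAn} with the step-line multi-indices. This gives the quotient
\begin{align*}
 \frac{\prod_{j=1}^q (\beta_j-\beta_s-k)_{m_{j,n}}}{\prod_{i=1}^p(\alpha_i+\beta_s+k+1)_{n_{i,n}}}.
\end{align*}
Third, multiply by \(C_n\) to obtain \eqref{eq:normAn}.

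The only point needing care is that \(C_n\) is well defined and nonzero. The denominator factors \((\beta_j-\beta_{r(n,q)+1}-m_{r(n,q)+1,n})_{m_{j,n}}\) do not vanish: for \(j\neq r(n,q)+1\) this follows from \(\beta_j-\beta_s\notin\mathbb Z\). For \(j=r(n,q)+1\) the factor is \((-m)_{m}=(-1)^m m!\neq0\) with \(m=m_{r(n,q)+1,n}\). The numerator factors are nonzero because \(\alpha_j+\beta_{r(n,q)+1}>-1\). This nonvanishing is exactly what the preceding proposition already used to normalize, so nothing new is required.

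We also check consistency for \(k<m_{s,n}\). Here the factor \((\beta_s-\beta_s-k)_{m_{s,n}}=(-k)_{m_{s,n}}\) vanishes, matching the orthogonality conditions. For \(s=r(n,q)+1\) and \(k=m_{s,n}\) the right-hand side reduces to \(1\), recovering \eqref{eq:normalization}.
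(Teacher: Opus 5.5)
Your proposal is correct and follows the same route the paper takes implicitly, since the paper states this corollary without a separate proof. You scale the general moment formula \eqref{eq:momentosgeneralizadosAn}, evaluated at the step-line multi-indices, by the normalization prefactor from the preceding proposition; your additional checks that this prefactor is finite and nonzero, and that the formula reproduces both the orthogonality conditions and \eqref{eq:normalization}, are accurate.
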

Analogously, for each \(n\in\mathbb N_0\), set
\(B_n(x):=B_{\vec n,\vec m}(x)\), where \(|\vec n|=n\) and
\(|\vec m|=n+1\),
\begin{align}
B_{\vec{n}, \vec{m}}(x) = \sum_{j=1}^q B^{(j)}_{\vec{n}, \vec{m} } (x)x^{\beta_j}
 .
\end{align}
Here \(\vec n=(n_{1,n},n_{2,n},\ldots,n_{p,n})\)
and
\(\vec m=(m_{1,n},m_{2,n},\ldots,m_{q,n})\)
are
defined by
 \begin{align}
 \label{eq:overlineniB}
n_{i,n} =\left\lceil \frac{|\vec{n}|+1-i}{p}\right\rceil = \left\lceil \frac{n+1-i}{p} \right\rceil
&&
\text{and}
&&
m_{j,n} = \left\lceil \frac{|\vec{m}|+1-j}{q} \right\rceil = \left\lceil \frac{n+2-j}{q} \right\rceil .
 \end{align}
Again write \(n=q\nu+\rho\), where \(0\leq\rho<q\), and set
\(r(n,q):=\rho\).
We impose the normalization
 \begin{align}
 \label{eq:normalizationB}
 \operatorname{LC} B^{(r(n,q)+1)}_n = 1 .
 \end{align}

This monic condition fixes the multiplicative constant in the type-II
contour representation.
 \begin{pro}
 Imposing this normalization condition gives
\begin{multline*}
B_{n}(x) =\frac{\prod_{j=1, j \ne r(n,q)+1}^q (\beta_j- \beta_{r(n,q)+1} - m_{r(n,q)+1,n}+1)_{m_{j,n}}
}{\prod_{i=1}^p (\beta_{r(n,q)+1}+m_{r(n,q)+1,n} + \alpha_i)_{n_{i,n}}} \\
\times
(m_{r(n,q)+1,n}-1)! (-1)^{m_{r(n,q)+1,n} -1}
\int_{\Sigma} \frac{x^t \prod_{i=1}^p (t+\alpha_i+1)_{n_{i,n}}}{\prod_{j=1}^q (\beta_j-t)_{m_{j,n}}} \frac{dt}{ 2 \pi i }
\end{multline*}
where \(n_{i,n}\) and \(m_{j,n}\) are given by \eqref{eq:overlineniB}.
 \end{pro}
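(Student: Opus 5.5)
The plan is to fix the multiplicative constant of the unnormalized type-II form by reading off one leading coefficient. By the dual contour representation \eqref{eq:contourPinheiroB}, the unnormalized form
\begin{align*}
\widetilde B_n(x)=\int_{\Sigma} \frac{x^t \prod_{i=1}^p (t+\alpha_i+1)_{n_{i,n}}}{\prod_{j=1}^q (\beta_j-t)_{m_{j,n}}} \frac{\d t}{ 2 \pi \mathrm i }
\end{align*}
is, up to a nonzero constant, the type-II form with the step-line indices \eqref{eq:overlineniB}. The corollary following that proposition then gives its components. In particular, the leading coefficient of the component \(j_0:=r(n,q)+1\) is given by \eqref{eq:LCB} with \(j=j_0\), \(m_{j_0}=m_{j_0,n}\) and \(n_i=n_{i,n}\).

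The steps are as follows.

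\emph{Step 1.} Check that \(m_{j_0,n}\ge 1\), so that \(B_n^{(j_0)}\) is nonzero. Writing \(n=q\nu+\rho\) with \(j_0=\rho+1\), we get
\begin{align*}
m_{j_0,n}=\left\lceil \frac{q\nu+\rho+2-\rho-1}{q}\right\rceil=\nu+1\ge1 .
\end{align*}

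\emph{Step 2.} Recall the residue computation behind \eqref{eq:kcoeficientBn}. The clockwise residue at \(t=\beta_{j_0}+k\) produces the coefficient of \(x^{\beta_{j_0}+k}\). For \(k=m_{j_0,n}-1\) this coefficient is
\begin{align*}
\operatorname{LC}\bigl(\widetilde B_n^{(j_0)}\bigr)=\frac{\prod_{i=1}^p (\beta_{j_0}+m_{j_0,n}+\alpha_i)_{n_{i,n}}}{\prod_{i\ne j_0} (\beta_i-\beta_{j_0}-m_{j_0,n}+1)_{m_{i,n}}\, (m_{j_0,n}-1)!\, (-1)^{m_{j_0,n}-1}} .
\end{align*}
This is nonzero under the standing hypotheses: \(\beta_i-\beta_{j_0}\notin\mathbb Z\), and \(\alpha_i+\beta_{j_0}>-1\) keeps the numerator Pochhammers positive.

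\emph{Step 3.} Set \(B_n=\widetilde B_n/\operatorname{LC}(\widetilde B_n^{(j_0)})\). The reciprocal of the displayed quantity is exactly the prefactor in the statement, because \((-1)^{m-1}=1/(-1)^{m-1}\). Uniqueness up to a constant, by normality, then identifies this \(B_n\) as the unique form satisfying \eqref{eq:normalizationB}.

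The only genuine obstacle is bookkeeping. One must confirm that the sign of the clockwise convention is already absorbed in \eqref{eq:LCB}, that is, that the corollary's coefficients are the actual residue values of the clockwise integral, so no extra \(-1\) appears. One must also avoid confusing the type-II index formulas \eqref{eq:overlineniB} with the type-I ones \eqref{eq:PinheiroA-ni}. To settle the sign, I would recompute the single residue at \(t=\beta_{j_0}+m_{j_0,n}-1\) directly. The relevant factor is
\begin{align*}
(\beta_{j_0}-t)_{m_{j_0,n}}=\prod_{r=0}^{m_{j_0,n}-1}(\beta_{j_0}+r-t).
\end{align*}
The residue of its reciprocal at \(t=\beta_{j_0}+m_{j_0,n}-1\) is \(-1/\bigl((-1)^{m_{j_0,n}-1}(m_{j_0,n}-1)!\bigr)\). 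The clockwise minus sign cancels this \(-1\), which reproduces \eqref{eq:LCB}.
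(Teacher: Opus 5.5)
Your proposal is correct and follows essentially the same route as the paper. The paper also writes \(B_n\) as an unknown constant \(\kappa\) times the contour integral \eqref{eq:contourPinheiroB}, reads off \(\operatorname{LC} B_n^{(r(n,q)+1)}\) from \eqref{eq:LCB}, and solves for \(\kappa\) using the monic condition \eqref{eq:normalizationB}. Your additional checks are all accurate: \(m_{r(n,q)+1,n}=\nu+1\ge 1\), the pivot is nonzero, and the clockwise sign is already absorbed in \eqref{eq:LCB}. They make explicit what the paper leaves implicit.
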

 \begin{proof}
To calculate \( B_n \) we can use the integral representation obtained in \eqref{eq:contourPinheiroB} up to a constant \(  \kappa \). In this case, taking into account \eqref{eq:LCB}, we have
\begin{align*}
\operatorname{LC} B_n^{(r(n,q)+1)} =  \frac{ \kappa \, \prod_{i=1}^p (\beta_{r(n,q)+1}+m_{r(n,q)+1,n}+\alpha_i)_{n_{i,n}}}{\prod_{j=1, j\ne {(r(n,q)+1)}}^q (\beta_j-\beta_{r(n,q)+1}-m_{r(n,q)+1,n}+1)_{m_{j,n}} (m_{r(n,q)+1,n}-1)! (-1)^{m_{r(n,q)+1,n} -1}} .
\end{align*}
To determine this constant \( \kappa \) we impose the normalization \( \operatorname{LC} B_n^{(r(n,q)+1)} = 1\), and we obtain the result.
\end{proof}
The preceding normalization constant gives every type-II component on the
step line.
\begin{coro}
Put \(j_0=r(n,q)+1\) and
\begin{align*}
C_n=
(-1)^{m_{j_0,n}-1} (m_{j_0,n}-1)!
 \, \frac{
 \prod_{h\ne j_0}(\beta_h-\beta_{j_0}-m_{j_0,n}+1)_{m_{h,n}}
 }
 {
 \prod_{i=1}^p
 (\beta_{j_0}+m_{j_0,n}+\alpha_i)_{n_{i,n}}}.
\end{align*}
With the normalization \eqref{eq:normalizationB}, the components are
given, for \(m_{j,n}\ge1\), by
\begin{multline*}
B_n^{(j)}(x)=C_n
\frac{\prod_{i=1}^p(\alpha_i+\beta_j+1)_{n_{i,n}}}
{(m_{j,n}-1)!\prod_{h\ne j}(\beta_h-\beta_j)_{m_{h,n}}}\\
\times\pFq{p+q}{p+q-1}
{-m_{j,n}+1,\,\{\alpha_i+\beta_j+n_{i,n}+1\}_{i=1}^p,\,
 \{\beta_j-\beta_h-m_{h,n}+1\}_{h\ne j}}
{\{\alpha_i+\beta_j+1\}_{i=1}^p,\,
 \{\beta_j-\beta_h+1\}_{h\ne j}}{x}.
\end{multline*}
If \(m_{j,n}=0\), then \(B_n^{(j)}\equiv0\) by convention.
\end{coro}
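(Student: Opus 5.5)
The plan is to combine the normalized contour representation of \(B_n\) from the preceding proposition with the componentwise residue evaluation that produced \eqref{PinheiroII}.

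First, write the proposition's result as \(B_n(x)=C_n\,\mathcal B_{\vec n,\vec m}(x)\). Here \(\mathcal B_{\vec n,\vec m}\) denotes the unnormalized contour integral \eqref{eq:contourPinheiroB} with the step-line multi-indices \eqref{eq:overlineniB}. The constant \(C_n\) in the statement is exactly the prefactor found there, with \(j_0=r(n,q)+1\).

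Second, recall that the corollary giving \eqref{PinheiroII} and \eqref{eq:kcoeficientBn} came from the clockwise residues of \(\mathcal B_{\vec n,\vec m}\) at \(t=\beta_j+k\). This identifies the \(x^{\beta_j}\)-component of the unnormalized integral with the right-hand side of \eqref{PinheiroII}, with no further constant. Multiplying by \(C_n\) and substituting \(n_i=n_{i,n}\), \(m_j=m_{j,n}\) then gives the displayed formula.

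The only thing to check is that the component decomposition is well defined. The hypothesis \(\beta_j-\beta_h\notin\mathbb Z\) ensures the functions \(x^{\beta_j+k}\) are linearly independent. So the coefficients of \(x^{\beta_j}\) in \(B_n(x)=\sum_j B^{(j)}_n(x)x^{\beta_j}\) are uniquely determined by the residues at \(\beta_j,\ldots,\beta_j+m_{j,n}-1\).

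As a consistency check, evaluate the leading coefficient of the \(j_0\) component via \eqref{eq:LCB}. Multiplied by \(C_n\), it should equal \(1\), recovering \eqref{eq:normalizationB}. The case \(m_{j,n}=0\) contributes no poles, so \(B_n^{(j)}\equiv0\), matching the convention.

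I expect no real obstacle. The only care needed is the bookkeeping of signs and factorials in \(C_n\): the \((m_{j_0,n}-1)!\) and \((-1)^{m_{j_0,n}-1}\) from inverting the leading coefficient must not be confused with the \((m_{j,n}-1)!\) in the prefactor of the hypergeometric expression. The Pochhammer rewriting into \({}_{p+q}F_{p+q-1}\) form is the same as in the type-I proof, with \((\vec\alpha,\vec n)\) and \((\vec\beta,\vec m)\) interchanged.
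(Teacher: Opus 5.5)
Your proposal is correct and follows the paper's own route. The paper obtains the corollary by multiplying the contour-normalized components \eqref{PinheiroII}, which come from the clockwise residues at \(t=\beta_j+k\), by the prefactor \(C_n\) of the preceding normalization proposition, evaluated at the step-line indices \eqref{eq:overlineniB}.
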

Expanding these components yields the following coefficient formula.
\begin{coro}
For \(m_{j,n}\ge1\),
\(
B_{n}^{(j)}(x) = \sum_{k=0}^{m_{j,n}-1} B_{n}^{(j)}[k] x^k
\)
where
\begin{multline}
\label{PinheiroCoefiStepBn}
B_{n}^{(j)}[k] =
 \frac{\prod_{j=1, j \ne r(n,q)+1}^q (\beta_j- \beta_{r(n,q)+1} - m_{r(n,q)+1,n}+1)_{m_{j,n}} (m_{r(n,q)+1,n}-1)! (-1)^{m_{r(n,q)+1,n}-1}}{\prod_{i=1}^p (\beta_{r(n,q)+1}+m_{r(n,q)+1,n} + \alpha_i)_{n_{i,n}}}\\
	 \times \frac{\prod_{i=1}^p (\alpha_i+k+\beta_j+1)_{n_{i,n}}}{\prod_{i=1, i\ne j}^q (\beta_i-\beta_j-k)_{m_{i,n}} k! (m_{j,n}-k-1)! (-1)^{k}} .
\end{multline}
\end{coro}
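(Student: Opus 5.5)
The plan is to read off the coefficients directly from the normalized contour representation in the preceding proposition, exactly as was done for the type-I coefficients \eqref{PinheiroCoefiStepAn}.

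First, write \(B_n=\kappa_n\,\widetilde B_{\vec n,\vec m}\). Here \(\widetilde B_{\vec n,\vec m}\) is the unnormalized contour integral \eqref{eq:contourPinheiroB} with the step-line indices \eqref{eq:overlineniB}. The constant \(\kappa_n=C_n\) is the prefactor fixed in the preceding proposition by the monic condition \eqref{eq:normalizationB}.

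Second, evaluate the integral by residues. For fixed \(j\) with \(m_{j,n}\ge1\), the poles of the integrand at \(t=\beta_j+k\), \(0\le k\le m_{j,n}-1\), are simple. This uses the standing hypothesis \(\beta_j-\beta_h\notin\mathbb Z\) for \(h\ne j\), which keeps the pole sets of the different factors \((\beta_h-t)_{m_{h,n}}\) disjoint. Each such pole contributes a multiple of \(x^{\beta_j}x^k\). Hence the coefficient of \(x^k\) in \(B_n^{(j)}\) is \(C_n\) times the clockwise residue, namely minus the residue at \(t=\beta_j+k\).

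This residue is the content of \eqref{eq:kcoeficientBn} with \(\vec n,\vec m\) replaced by the step-line indices. To see it directly, factor
\[
(\beta_j-t)_{m_{j,n}}=\prod_{r=0}^{m_{j,n}-1}(\beta_j+r-t).
\]
At \(t=\beta_j+k\) the remaining product is
\[
\prod_{r<k}(r-k)\prod_{r>k}(r-k)=(-1)^k k!\,(m_{j,n}-k-1)!.
\]
The vanishing factor is \(-(t-\beta_j-k)\), and its minus sign is absorbed by the clockwise orientation. The other factors evaluate to \(\prod_{i}(\alpha_i+\beta_j+k+1)_{n_{i,n}}\) in the numerator and \(\prod_{h\ne j}(\beta_h-\beta_j-k)_{m_{h,n}}\) in the denominator.

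Third, multiply by \(C_n\) as written in the previous corollary to obtain \eqref{PinheiroCoefiStepBn}. This is equivalent to expanding the hypergeometric series of that corollary term by term and undoing the Pochhammer identities used in the type-I proof. The only real check is sign bookkeeping under the clockwise convention. I would confirm it at \(k=m_{j_0,n}-1\), \(j=j_0=r(n,q)+1\): there the product of \(C_n\) with \eqref{eq:LCB} must give \(1\), consistent with \eqref{eq:normalizationB}. The product index in \(C_n\) should be renamed (\(h\) rather than \(j\)) to avoid a clash with the free index \(j\).
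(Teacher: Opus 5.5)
Your proposal is correct and follows the paper's own (implicit) route. The paper obtains this corollary by multiplying the monic normalizing constant \(C_n\), the reciprocal of the raw leading coefficient \eqref{eq:LCB}, by the residue formula \eqref{eq:kcoeficientBn} evaluated at the step-line indices \eqref{eq:overlineniB}, and your residue computation, including the clockwise sign bookkeeping and the \(k=m_{j_0,n}-1\) consistency check, reproduces exactly that.
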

The normalized type-II step-line form then has the following mixed moments.
\begin{coro}
For \(s\in\{1,\ldots,p\}\), these moments are
\begin{multline}
 \label{eq:normBn}
 \int_0^1 x^{k} x^{\alpha_s} B_{n} (x)\,\d x
 =\frac{\prod_{j=1, j \ne r(n,q)+1}^q (\beta_j- \beta_{r(n,q)+1} - m_{r(n,q)+1,n}+1)_{m_{j,n}}
}{\prod_{i=1}^p (\beta_{r(n,q)+1}+m_{r(n,q)+1,n} + \alpha_i)_{n_{i,n}}} \\ \times
 \frac{(m_{r(n,q)+1,n}-1)! (-1)^{ m_{r(n,q)+1,n} -1}
 \prod_{i=1}^p (\alpha_i-\alpha_{s} -k )_{n_{i,n}}}{\prod_{j=1}^q (\beta_j+ \alpha_{s} +k +1)_{m_{j,n}}},\qquad k\in\mathbb N_0.
 \end{multline}
 where \(n_{i,n}\) and \(m_{j,n}\) are given by \eqref{eq:overlineniB}.
\end{coro}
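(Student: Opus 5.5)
The identity \eqref{eq:normBn} is a direct combination of two earlier results, so we only assemble them. The normalized step-line form is a constant multiple of the unnormalized contour form: by the proposition fixing the monic normalization \eqref{eq:normalizationB}, $B_n=C_n\,\widetilde B_{\vec n,\vec m}$. Here $\widetilde B_{\vec n,\vec m}$ is the contour integral \eqref{eq:contourPinheiroB}, evaluated at the step-line indices $n_{i,n}$, $m_{j,n}$ of \eqref{eq:overlineniB}. The constant $C_n$ is the prefactor appearing there, which coincides with $C_n$ of the subsequent corollary, with $j_0=r(n,q)+1$.

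Integration against $x^{k+\alpha_s}$ is linear, so it suffices to compute the mixed moments of $\widetilde B_{\vec n,\vec m}$. These are given by \eqref{eq:momentosgeneralizadosBn}, which applies for any admissible pair with $|\vec m|=|\vec n|+1$. That hypothesis holds on the step line, since $|\vec n|=n$ and $|\vec m|=n+1$. This yields
\begin{align*}
\int_0^1 x^{k+\alpha_s}\widetilde B_{\vec n,\vec m}(x)\,\d x=\frac{\prod_{i=1}^p(\alpha_i-\alpha_s-k)_{n_{i,n}}}{\prod_{j=1}^q(\beta_j+\alpha_s+k+1)_{m_{j,n}}}.
\end{align*}
Multiplying by $C_n$ gives exactly the right-hand side of \eqref{eq:normBn}.

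The only points needing care are bookkeeping. First, the contour representation used in the moment computation must be literally the same integral as the one normalized in the proposition, with no hidden constant; both are \eqref{eq:contourPinheiroB} with the same clockwise $\Sigma$. Second, the Mellin computation behind \eqref{eq:momentosgeneralizadosBn} requires the integrand to be $O(t^{-2})$ at infinity. This follows from $|\vec m|-|\vec n|=1$ together with the extra factor $1/(t+\alpha_s+k+1)$. As a consistency check, for $k\le n_{s,n}-1$ the factor $(\alpha_s-\alpha_s-k)_{n_{s,n}}$ contains the zero factor $(-k)+k=0$, so the formula recovers the type-II orthogonality. I do not expect any real obstacle.
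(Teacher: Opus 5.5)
Your proposal is correct and is essentially the argument the paper intends; the paper states this corollary without a separate proof. The normalized $B_n$ is the contour form \eqref{eq:contourPinheiroB}, taken at the step-line indices \eqref{eq:overlineniB}, multiplied by the constant $C_n$ that \eqref{eq:normalizationB} fixes, and multiplying $C_n$ by the moment formula \eqref{eq:momentosgeneralizadosBn} gives \eqref{eq:normBn}. Your bookkeeping remarks are also accurate: the contour and its constant must be literally the same in both steps, the integrand decays like $O(t^{-2})$, and the formula vanishes for $k\le n_{s,n}-1$.
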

Next, consider the basic Christoffel transformations described in
\cite{banda_bidiagonal}. Define the affine cyclic shifts
\(\mathcal C_p\) and \(\mathcal C_q\) by
\begin{align*}
\mathcal C_p \vec{\alpha} = \begin{bNiceMatrix}
		\alpha_2 & \Cdots & \alpha_p & \alpha_1+1
\end{bNiceMatrix}, \quad
\mathcal C_q \vec{\beta} = \begin{bNiceMatrix}
		\beta_2 & \Cdots & \beta_q & \beta_1+1
\end{bNiceMatrix}.
 \end{align*}
For admissible parameter vectors, put
\begin{align*}
\vec{\alpha}^{(k)} \coloneq \mathcal C_p^k \vec{\alpha}, &&
\vec{\beta}^{(k)} \coloneq \mathcal C_q^k \vec{\beta}.
 \end{align*}
Using the affine extensions
\begin{align}
\label{eq:ext_nova}
\alpha_{sp+i}=\alpha_i+s && \text{for} && 1\le i\le p &&
\text{and} &&
\beta_{sq+j}=\beta_j+s && \text{for} && 1\le j\le q ,
\end{align}
this is equivalently
 \begin{align*}
\vec{\alpha}^{(k)} =(\alpha_{1+k}, \alpha_{2+k}, \ldots, \alpha_{p+k}), &&
\vec{\beta}^{(k)} = (\beta_{1+k}, \beta_{2+k}, \ldots, \beta_{q+k}),
 \end{align*}
and the transformed parameters remain in the admissible AT range. Indeed,
with \(u=(x^{\alpha_1},\ldots,x^{\alpha_p})\) and~\(v=(x^{\beta_1},\ldots,x^{\beta_q})\),
\begin{align*}
 u\mathfrak X_{[p]}^{\top}
 &= (x^{\alpha_2},\ldots,x^{\alpha_p},x^{\alpha_1+1}),&
 \mathfrak X_{[q]}v^{\top}
 &= (x^{\beta_2},\ldots,x^{\beta_q},x^{\beta_1+1})^{\top}.
\end{align*}
Consequently, the Christoffel-transformed matrices of measures remain mixed
Pi\~neiro matrices and satisfy
\begin{align*}
\left( \d \mu_{\vec{\alpha}, \vec{\beta}} \right)_L^{(k)} = \d \mu_{\vec{\alpha}^{(k)}, \vec{\beta}}, &&
\left( \d \mu_{\vec{\alpha}, \vec{\beta}} \right)_R^{(k)} = \d \mu_{\vec{\alpha}, \vec{\beta}^{(k)}}.
 \end{align*}
For brevity, write
\(A_n^{(i)}=(A_{\vec\alpha,\vec\beta})_n^{(i)}\). Then
\(\left(A^{(k)}_{L,\vec\alpha,\vec\beta}\right)^{(i)}_n
=\left(A_{\vec\alpha^{(k)},\vec\beta}\right)^{(i)}_n\) and
\(\left(A^{(k)}_{R,\vec\alpha,\vec\beta}\right)^{(i)}_n
=\left(A_{\vec\alpha,\vec\beta^{(k)}}\right)^{(i)}_n\).

The cyclic left Christoffel shift can now be substituted directly into the
normalized type-I formula.
\begin{coro}
 For \(n_{i,n}\ge1\), the component
 $\left( A_{\vec{\alpha}^{(k)}, \vec{\beta}} \right)^{(i)}_n$ has the following explicit representation with normalization \eqref{eq:normalization}:
 \begin{multline*}
\left( A_{\vec{\alpha}^{(k)}, \vec{\beta}} \right)^{(i)}_n(x)= \frac{\prod_{j=1}^p (\alpha_{j+k}+ \beta_{r(n,q)+1} + m_{r(n,q)+1,n}+1)_{n_{j,n}}}{\prod_{j=1}^q (\beta_j-\beta_{r(n,q)+1}-m_{r(n,q)+1,n})_{m_{j,n}}} \frac{\prod_{j=1}^q (\alpha_{i+k}+\beta_j+1)_{m_{j,n}}}{(n_{i,n}-1)!\prod_{j=1,j\ne i}^p (\alpha_{j+k}-\alpha_{i+k})_{n_{j,n}}} \\
\times
\pFq{p+q}{p+q-1}{-n_{i,n}+1,\,\{\alpha_{i+k}+\beta_s+m_{s,n}+1\}_{s=1}^q,\,\{\alpha_{i+k}-\alpha_{s+k}-n_{s,n}+1\}_{s=1,s\neq i}^p}{\{\alpha_{i+k}+\beta_s+1\}_{s=1}^q,\,\{\alpha_{i+k}-\alpha_{s+k}+1\}_{s=1,s\neq i}^p}{x}
\end{multline*}
where $n_{i,n}$ and $m_{j,n}$ are given by \eqref{eq:PinheiroA-ni}.
\end{coro}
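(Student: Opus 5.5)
The plan is to obtain the statement by direct substitution into the normalized step-line type-I formula, namely the corollary giving \(A_n^{(i)}(x)\) under the normalization \eqref{eq:normalization}. The first step is to record that the left Christoffel transformation replaces \(\d\mu_{\vec\alpha,\vec\beta}\) by \(\d\mu_{\vec\alpha^{(k)},\vec\beta}\). The \(i\)-th entry of \(\vec\alpha^{(k)}\) is \(\alpha_{i+k}\), in the affine extension \eqref{eq:ext_nova}. The transformed matrix is therefore again a mixed Pi\~neiro matrix of measures, with parameter vector \(\vec\alpha^{(k)}\) in place of \(\vec\alpha\).

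The second step is to check that all hypotheses used to derive the normalized formula still hold for \(\vec\alpha^{(k)}\). Each \(\alpha_{i+k}=\alpha_{i'}+s\) with \(s\ge0\), so \(\alpha_{i+k}>-1\) and \(\alpha_{i+k}+\beta_j>-1\). Moreover, \(\alpha_{i+k}-\alpha_{h+k}\notin\mathbb Z\) for \(i\ne h\). This holds because \(i+k\) and \(h+k\) have different residues modulo \(p\), so the difference is \(\alpha_{i'}-\alpha_{h'}\) plus an integer, with \(i'\ne h'\). Hence the AT and perfectness assumptions, the contour representation \eqref{eq:contourPinheiro}, the moment formula \eqref{eq:momentosgeneralizadosAn}, and the normalization proposition all apply verbatim. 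The step-line indices \(n_{i,n}\), \(m_{j,n}\) in \eqref{eq:PinheiroA-ni} depend only on \(n,p,q\), not on the parameters, so they are unchanged. The normalization \eqref{eq:normalization} involves only \(\vec\beta\), which is untouched by a left perturbation, and it therefore fixes the same kind of constant.

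The final step is to replace every occurrence of \(\alpha_j\) in the normalized corollary by \(\alpha_{j+k}\), while leaving the index \(j\) of \(n_{j,n}\) unchanged. This replacement is made in the prefactor \(\prod_j(\alpha_j+\beta_{r(n,q)+1}+m_{r(n,q)+1,n}+1)_{n_{j,n}}\), in \(\prod_j(\alpha_i+\beta_j+1)_{m_{j,n}}\), in \(\prod_{j\ne i}(\alpha_j-\alpha_i)_{n_{j,n}}\), and in the hypergeometric parameters. Doing so yields exactly the displayed \({}_{p+q}F_{p+q-1}\) expression.

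The only point needing care, and the one I expect to be the main obstacle, is the bookkeeping of which subscript is shifted. The weight attached to the \(i\)-th component is \(x^{\alpha_{i+k}}\), but the degree bound is still governed by \(n_{i,n}\). This is consistent because the multi-index \(\vec n\) labels components by position, and the transformed weight vector is \(u\mathfrak X_{[p]}^{\top}\) in positional order. I would confirm this against the identity \(\left(A^{(k)}_{L,\vec\alpha,\vec\beta}\right)^{(i)}_n=\left(A_{\vec\alpha^{(k)},\vec\beta}\right)^{(i)}_n\) stated just before the corollary.
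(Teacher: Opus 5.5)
Your proposal is correct and matches the paper's route. The paper also obtains this corollary by substituting \(\vec\alpha^{(k)}=(\alpha_{1+k},\ldots,\alpha_{p+k})\) into the normalized step-line type-I formula. It justifies this with the identity \(\bigl(\d\mu_{\vec\alpha,\vec\beta}\bigr)_L^{(k)}=\d\mu_{\vec\alpha^{(k)},\vec\beta}\) and the remark that the shifted parameters stay in the admissible AT range. Your explicit verification of the three admissibility conditions, via residues modulo \(p\), supplies the detail the paper only asserts.
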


Its leading coefficient follows by taking the highest-degree term.
 \begin{coro}
 For \(n_{i,n}\ge1\),
 \begin{align*}
\operatorname{LC} \left(A_{\vec{\alpha}^{(k)}, \vec{\beta}} \right)^{(i)}_n= \frac{\prod_{j=1}^p (\alpha_{j+k}+ \beta_{r(n,q)+1} + m_{r(n,q)+1,n}+1)_{n_{j,n}}}{\prod_{j=1}^q (\beta_j-\beta_{r(n,q)+1}-m_{r(n,q)+1,n})_{m_{j,n}}} \frac{(-1)^{n_{i,n}-1}\prod_{j=1}^q (\alpha_{i+k}+n_{i,n}+\beta_j)_{m_{j,n}}}{\prod_{j=1, j\ne i}^p (\alpha_{j+k}-\alpha_{i+k}-n_{i,n}+1)_{n_{j,n}} (n_{i,n}-1)! } ,
\end{align*}
where $n_{i,n}$ and $m_{j,n}$ are given by \eqref{eq:PinheiroA-ni}.
 \end{coro}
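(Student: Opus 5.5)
The statement is the preceding leading-coefficient corollary, specialised to the left-shifted parameters \(\vec\alpha^{(k)}\). The plan is to substitute into results already proved rather than argue anew.

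First, the \(k\)-th left Christoffel transform of \(\d\mu_{\vec\alpha,\vec\beta}\) is again a mixed Pi\~neiro matrix \(\d\mu_{\vec\alpha^{(k)},\vec\beta}\). Its parameters \(\alpha_{1+k},\ldots,\alpha_{p+k}\) come from the affine extension \eqref{eq:ext_nova}. We must confirm that the standing hypotheses persist:
\begin{itemize}
\item \(\alpha_{i+k}>-1\) and \(\alpha_{i+k}+\beta_j>-1\) hold, because each shift only adds a nonnegative integer;
\item \(\alpha_{i+k}-\alpha_{h+k}\notin\mathbb Z\) for \(i\ne h\) holds, because the differences change only by integers and the indices \(i+k\), \(h+k\) reduce to distinct residues modulo \(p\).
\end{itemize}
Hence perfectness, and all normalized step-line formulas, apply verbatim with \(\alpha_i\) replaced by \(\alpha_{i+k}\). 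The step-line multi-indices \(n_{i,n},m_{j,n}\) of \eqref{eq:PinheiroA-ni} depend only on \(n,p,q\), so they are unchanged. The normalization \eqref{eq:normalization} involves only \(\beta\) and \(m\), so it is also unaffected.

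Second, take the top coefficient \(k'=n_{i,n}-1\) in \eqref{PinheiroCoefiStepAn} with these substituted parameters. The prefactor is the reciprocal of the normalizing moment computed from \eqref{eq:momentosgeneralizadosAn} for the shifted system, namely
\begin{align*}
\frac{\prod_{j}(\alpha_{j+k}+\beta_{r(n,q)+1}+m_{r(n,q)+1,n}+1)_{n_{j,n}}}{\prod_j(\beta_j-\beta_{r(n,q)+1}-m_{r(n,q)+1,n})_{m_{j,n}}}.
\end{align*}
The second factor then becomes
\begin{align*}
\frac{\prod_j(\alpha_{i+k}+n_{i,n}+\beta_j)_{m_{j,n}}}{\prod_{j\ne i}(\alpha_{j+k}-\alpha_{i+k}-n_{i,n}+1)_{n_{j,n}}\,(n_{i,n}-1)!\,0!\,(-1)^{n_{i,n}-1}}.
\end{align*}
Moving \((-1)^{n_{i,n}-1}\) to the numerator, which is legitimate since it is \(\pm1\), gives the displayed formula. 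Equivalently, one can read this off as the \(x^{n_{i,n}-1}\) coefficient of the \({}_{p+q}F_{p+q-1}\) representation in the previous corollary.

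The only point needing care is the justification of the parameter substitution. We must check that the index set \(\{1+k,\ldots,p+k\}\) under \eqref{eq:ext_nova} gives an admissible AT system. We must also check that the \(i\)-th component of the transformed form indeed carries weight \(x^{\alpha_{i+k}}\), matching \(u\mathfrak X_{[p]}^\top\) iterated \(k\) times. The latter follows by induction on \(k\) from the identity \(u\mathfrak X_{[p]}^{\top}=(x^{\alpha_2},\ldots,x^{\alpha_p},x^{\alpha_1+1})\). Once this is in place, the rest is direct substitution, with no further obstacle.
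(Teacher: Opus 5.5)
Your proposal is correct and follows the paper's route: the paper just reads off the top-degree term of the preceding left-shifted representation, which is the same as substituting \(\alpha_i\mapsto\alpha_{i+k}\) into \eqref{PinheiroCoefiStepAn} at the top index \(n_{i,n}-1\). Your additional checks are right and are only implicit in the paper: that the shifted parameters remain admissible, and that the \(i\)-th weight is \(x^{\alpha_{i+k}}\) via \(u\mathfrak X_{[p]}^{\top}\).
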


The right Christoffel shift is obtained similarly by cycling the
\(\beta\)-parameters.
 \begin{coro}
 For \(n_{i,n}\ge1\), the transformed component has the explicit representation
 \(\left(A_{\vec\alpha,\vec\beta^{(k)}}\right)^{(i)}_n\). If
 \(s=r(n,q)+1\), its normalization is
 \begin{align}
 \label{eq:normalization-right-transformed}
  & \int_0^1x^{m_{s,n}}x^{\beta_{s+k}}
 \left(A_{\vec\alpha,\vec\beta^{(k)}}\right)_n(x)\,\d x=1. \\
 &
\begin{multlined}[t][.95\textwidth]
\left( A_{\vec{\alpha}, \vec{\beta}^{(k)} } \right)^{(i)}_n(x)= \frac{\prod_{j=1}^p (\alpha_j+ \beta_{r(n,q)+1+k} + m_{r(n,q)+1,n}+1)_{n_{j,n}} }{ \prod_{j=1}^q (\beta_{j+k}-\beta_{r(n,q)+1+k}-m_{r(n,q)+1,n} )_{m_{j,n}} } \frac{\prod_{j=1}^q (\alpha_i+\beta_{j+k}+1)_{m_{j,n}} }{ (n_{i,n}-1)! \prod_{j=1,j\ne i}^p (\alpha_j-\alpha_i)_{n_{j,n}} } \\
\times
 \pFq{p+q}{p+q-1}{-n_{i,n}+1,\,\{\alpha_i+\beta_{s+k}+m_{s,n}+1\}_{s=1}^q,\,\{\alpha_i-\alpha_s-n_{s,n}+1\}_{s=1,s\neq i}^p}{\{\alpha_i+\beta_{s+k}+1\}_{s=1}^q,\,\{\alpha_i-\alpha_s+1\}_{s=1,s\neq i}^p}{x} ,
\end{multlined} 
\end{align}
where $n_{i,n}$ and $m_{j,n}$ are given by \eqref{eq:PinheiroA-ni}.
\end{coro}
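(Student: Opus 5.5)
The plan is to obtain the right-shifted formula by applying the earlier normalized step-line results to the mixed Pi\~neiro matrix with parameters \((\vec\alpha,\vec\beta^{(k)})\). The key observation, recorded just before the corollary, is that \(\left(\d\mu_{\vec\alpha,\vec\beta}\right)_R^{(k)}=\d\mu_{\vec\alpha,\vec\beta^{(k)}}\). Moreover, \(\vec\beta^{(k)}=(\beta_{1+k},\ldots,\beta_{q+k})\) stays in the admissible AT range: the affine extension \eqref{eq:ext_nova} only adds integers, so the differences \(\beta_{j+k}-\beta_{s+k}\) remain non-integers for \(j\ne s\), and \(\alpha_i+\beta_{j+k}>-1\). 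So every earlier statement for a general admissible pair \((\vec\alpha,\vec\beta)\) applies verbatim with each \(\beta_j\) replaced by \(\beta_{j+k}\). The step-line multi-indices \(n_{i,n},m_{j,n}\) in \eqref{eq:PinheiroA-ni} depend only on \(n,p,q\), so they are unchanged.

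The steps, in order, are as follows.

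First, apply the contour representation \eqref{eq:contourPinheiro} with \(\beta_j\mapsto\beta_{j+k}\). This gives the type-I form for the shifted system up to a constant.

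Second, identify the normalization. The step-line normalization \eqref{eq:normalization} for the shifted measure uses the weight \(v_{s}=x^{\beta^{(k)}_{s}}=x^{\beta_{s+k}}\) with \(s=r(n,q)+1\). This is exactly \eqref{eq:normalization-right-transformed}.

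Third, compute the constant. Use the moment formula \eqref{eq:momentosgeneralizadosAn} with \(\beta\mapsto\beta^{(k)}\), index \(s\), and exponent \(m_{s,n}\). The moment is \(\prod_{j}(\beta_{j+k}-\beta_{s+k}-m_{s,n})_{m_{j,n}}\big/\prod_i(\alpha_i+\beta_{s+k}+m_{s,n}+1)_{n_{i,n}}\). Its reciprocal is the displayed prefactor, exactly as in the proof of the normalized step-line proposition.

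Fourth, substitute this prefactor into the componentwise hypergeometric formula \eqref{PinheiroI}, again with \(\beta_j\mapsto\beta_{j+k}\). This reproduces the stated \({}_{p+q}F_{p+q-1}\) expression, since the \(\alpha\)-dependent factors \((\alpha_j-\alpha_i)_{n_{j,n}}\) and the \(\alpha_i-\alpha_s\) parameters are untouched.

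There is no genuine analytic obstacle, because the statement is a substitution corollary. The one point needing care is bookkeeping. In the normalization, the shift must be applied to the weight index \(\beta_{s+k}\), while the degree \(m_{s,n}\) keeps the unshifted index \(s=r(n,q)+1\). This is because the step-line structure attaches to positions, not to parameter values. I will also check that the dummy index \(s\) in the hypergeometric parameter lists is not confused with \(s=r(n,q)+1\). Finally, I will confirm that the nonvanishing of the normalizing moment (needed to divide) follows from Proposition~\ref{prop:biorthogonality} applied to the shifted AT system.
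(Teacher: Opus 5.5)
Your proposal is correct and follows essentially the same route as the paper. The paper gives no separate proof: it obtains this corollary by substituting \(\vec\beta^{(k)}=(\beta_{1+k},\ldots,\beta_{q+k})\) into the normalized step-line formula, using \((\d\mu_{\vec\alpha,\vec\beta})_R^{(k)}=\d\mu_{\vec\alpha,\vec\beta^{(k)}}\) and the moment formula \eqref{eq:momentosgeneralizadosAn} to fix the constant. Your checks on the admissibility of the shifted parameters, and on shifting the weight index \(\beta_{s+k}\) while keeping the degree \(m_{s,n}\) unshifted, are exactly the bookkeeping the paper leaves implicit.
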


Taking the highest-degree term gives the leading coefficient of the right
Christoffel transform.
 \begin{coro}
 For \(n_{i,n}\ge1\),
 \begin{align*}
\operatorname{LC} \left(\left(A_{\vec{\alpha}, \vec{\beta}^{(k)}} \right)^{(i)}_n\right)= \frac{\prod_{j=1}^p (\alpha_{j}+ \beta_{r(n,q)+1 +k} + m_{r(n,q)+1,n}+1)_{n_{j,n}}}{\prod_{j=1}^q (\beta_{j+k}-\beta_{r(n,q)+1+k}-m_{r(n,q)+1,n})_{m_{j,n}}} \frac{(-1)^{n_{i,n}-1}\prod_{j=1}^q (\alpha_{i}+n_{i,n}+\beta_{j+k})_{m_{j,n}}}{\prod_{j=1, j\ne i}^p (\alpha_{j}-\alpha_{i}-n_{i,n}+1)_{n_{j,n}} (n_{i,n}-1)! } ,
\end{align*}
where $n_{i,n}$ and $m_{j,n}$ are given by \eqref{eq:PinheiroA-ni}.
\end{coro}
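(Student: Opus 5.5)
The statement to prove is the leading-coefficient formula for the right Christoffel transform \(\bigl(A_{\vec\alpha,\vec\beta^{(k)}}\bigr)^{(i)}_n\). The plan is to read off the coefficient of \(x^{n_{i,n}-1}\) in the explicit representation of the preceding corollary. That is the same step as the unperturbed leading-coefficient corollary, now with \(\vec\beta\) replaced by \(\vec\beta^{(k)}=(\beta_{1+k},\ldots,\beta_{q+k})\), using the affine extension \eqref{eq:ext_nova}.

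First, I note that the right Christoffel transformation changes only the \(\beta\)-parameters. The transformed matrix of measures is the mixed Pi\~neiro matrix \(\d\mu_{\vec\alpha,\vec\beta^{(k)}}\), and the step-line multi-indices \(n_{i,n}\), \(m_{j,n}\) from \eqref{eq:PinheiroA-ni} do not depend on the parameters. So every earlier Pi\~neiro formula applies verbatim after the substitution \(\beta_j\mapsto\beta_{j+k}\) for all \(j\). In particular, the unnormalized coefficient \eqref{eq:kcoeficientAn} evaluated at \(k'=n_{i,n}-1\) gives
\begin{align*}
\frac{(-1)^{n_{i,n}-1}\prod_{j=1}^q(\alpha_i+n_{i,n}+\beta_{j+k})_{m_{j,n}}}{\prod_{j\ne i}(\alpha_j-\alpha_i-n_{i,n}+1)_{n_{j,n}}\,(n_{i,n}-1)!}.
\end{align*}
Here I use \((n_{i,n}-k'-1)!=0!=1\) and \(\alpha_i+k'+\beta_{j+k}+1=\alpha_i+n_{i,n}+\beta_{j+k}\).

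Second, I multiply by the normalization constant. By \eqref{eq:momentosgeneralizadosAn}, with \(\vec\beta^{(k)}\), \(s=r(n,q)+1\) and moment order \(m_{s,n}\), this constant is the reciprocal of
\begin{align*}
\frac{\prod_j(\beta_{j+k}-\beta_{s+k}-m_{s,n})_{m_{j,n}}}{\prod_i(\alpha_i+\beta_{s+k}+m_{s,n}+1)_{n_{i,n}}}.
\end{align*}
This matches the prefactor in \eqref{eq:normalization-right-transformed}. The constant is well defined: the denominator factor \((\beta_{j+k}-\beta_{s+k}-m_{s,n})_{m_{j,n}}\) does not vanish, by the non-integrality of \(\beta_j-\beta_h\) for \(j\ne h\) (which is preserved by the affine shift) and, for \(j=s\), by \((-m_{s,n})_{m_{s,n}}=(-1)^{m_{s,n}}m_{s,n}!\neq0\). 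Multiplying the two factors yields the displayed formula.

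The only delicate point is bookkeeping rather than a genuine obstacle. The index \(j+k\) must be read through \eqref{eq:ext_nova} (for instance \(\beta_{q+1}=\beta_1+1\)), consistently in every factor. Under the contour representation's conventions, the coefficient of \(x^{n_{i,n}-1}\) must also be the true degree-\(n_{i,n}-1\) term of the hypergeometric series. This holds because the factor \((-n_{i,n}+1)_{k'}\) terminates the series exactly at \(k'=n_{i,n}-1\), and that last term equals the residue coefficient above. Alternatively, one can verify agreement directly from the \({}_{p+q}F_{p+q-1}\) expression, using the Pochhammer identities from the proof of \eqref{PinheiroI} in reverse.
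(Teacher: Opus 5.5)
Your proposal is correct and follows essentially the paper's route: the paper obtains this leading coefficient by taking the highest-degree term of the preceding explicit representation. That term is exactly the top coefficient \eqref{eq:kcoeficientAn} at index \(n_{i,n}-1\) with \(\beta_j\mapsto\beta_{j+k}\), multiplied by the reciprocal of the moment \eqref{eq:momentosgeneralizadosAn} fixed by \eqref{eq:normalization-right-transformed}. Your extra checks — that the normalizing moment is nonzero, that non-integrality of the \(\beta\)-differences survives the affine shift, and that the series terminates at degree \(n_{i,n}-1\) — are sound details the paper leaves implicit.
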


Recalling Corollary~\ref{coro}, the coefficients in the bidiagonal
factorization are given by
\begin{align*}
	(L_k)_{n+1,n} &= \frac{\operatorname{LC} \left( \left( A_{\vec{\alpha}^{(k)}, \vec{\beta}} \right)^{(r(n,p)+1)}_n \right)}{\operatorname{LC} \left( \left( A_{\vec{\alpha}^{(k-1)}, \vec{\beta}} \right)^{(r(n+1,p)+1)}_{n+1} \right)},
&
	(U_k)_{n,n} &= \frac{\operatorname{LC} \left( \left( A_{\vec{\alpha}, \vec{\beta}^{(k-1)}} \right)^{(r(n,p)+1)}_n \right)}{\operatorname{LC} \left( \left( A_{\vec{\alpha}, \vec{\beta}^{(k)}} \right)^{(r(n,p)+1)}_{n} \right)}.
 \end{align*}
These identities are exact consequences of Corollary~\ref{coro}: inserting
the displayed leading coefficients gives the bidiagonal entries, and the
product \(L_1\cdots L_pU_q\cdots U_1\) is the recurrence matrix.

Substituting the contour-normalized polynomial coefficients
\eqref{eq:kcoeficientAn} and \eqref{eq:kcoeficientBn}, together with the
moments~\eqref{eq:momentosgeneralizadosAn} and
\eqref{eq:momentosgeneralizadosBn}, into \eqref{MixedRecurrence} yields the
following result.

In the next theorem we use the same contour
normalization in \eqref{eq:contourPinheiro} and
\eqref{eq:contourPinheiroB}: the unspecified common multiplicative constant
in each of those two representations is set equal to \(1\). This convention
is essential, because rescaling the type-II forms changes the off-diagonal
recurrence coefficients by the corresponding ratios of scaling constants.

With this convention fixed, the recurrence coefficients can be stated
explicitly.
\begin{teo}
	\label{teo:PiñeiroRecurrence}
	The Piñeiro recurrence coefficients \eqref{MixedRecurrence} that appear in the recurrence relations \eqref{MixedRII} are:
	\begin{align}
		\label{PiñeiroRecurrence}
		b^j_{\vec{n},\vec{m}}=&
\begin{multlined}[t][.825\textwidth]
\prod_{i=1}^p\dfrac{\left(\alpha_{i}+\beta_{\pi_t(j)}+m_{\pi_t(j)}+1\right)_{\left(\vec{n}+\vec{\mathfrak s}_{j+1}\right)_i}}{\left(\alpha_{i}+\beta_{\pi_t(j)}+m_{\pi_t(j)}+1\right)_{\left(\vec{n}+\vec{\mathfrak s}_{j}\right)_i}}\\
		\times\sum_{k=j}^q
		\prod_{i=1}^p\dfrac{\left(\alpha_{i}+\beta_{\pi_t(k)}+m_{\pi_t(k)}\right)_{n_i}}{\left(\alpha_{i}+\beta_{\pi_t(k)}+m_{\pi_t(k)}+1\right)_{\left(\vec{n}+\vec{\mathfrak s}_{j+1}\right)_i}}\dfrac{\prod_{i=1}^q
			\beta_{\pi_t(i)}-\beta_{\pi_t(k)}-m_{\pi_t(k)}}{\prod_{i=j,i\neq k}^q
			\beta_{\pi_t(i)}-\beta_{\pi_t(k)}-m_{\pi_t(k)}+m_{\pi_t(i)}},
\end{multlined}
		\\[4pt]
		b^0_{\vec{n},\vec{m}}=&
\begin{multlined}[t][.825\textwidth]
\prod_{j=1}^q\dfrac{\alpha_{\pi_\mathfrak{s}(1)}+\beta_j+n_{\pi_\mathfrak{s}(1)}+1}{\alpha_{\pi_\mathfrak{s}(1)}+\beta_j+n_{\pi_\mathfrak{s}(1)}+m_j+1}\,\prod_{i=1}^p\dfrac{
\alpha_{i}-\alpha_{\pi_\mathfrak{s}(1)}-n_{\pi_\mathfrak{s}(1)}-1}{
		\alpha_{i}-\alpha_{\pi_\mathfrak{s}(1)}-n_{\pi_\mathfrak{s}(1)}+n_{i}-1}\\
		+\sum_{k=1}^p
		\dfrac{\left(\alpha_{\pi_\mathfrak{s}(1)}+\beta_{\pi_\mathfrak{t}(1)}+n_{\pi_\mathfrak{s}(1)}+m_{\pi_\mathfrak{t}(1)}\right)}{\left(\alpha_{\pi_\mathfrak{s}(k)}+\beta_{\pi_\mathfrak{t}(1)}+n_{\pi_\mathfrak{s}(k)}+m_{\pi_\mathfrak{t}(1)}-1\right)\left(\alpha_{\pi_\mathfrak{s}(1)}-\alpha_{\pi_\mathfrak{s}(k)}-n_{\pi_\mathfrak{s}(k)}+n_{\pi_\mathfrak{s}(1)}+1\right)}\\
		\times\prod_{j=1}^q\dfrac{\alpha_{\pi_\mathfrak{s}(k)}+\beta_j+n_{\pi_\mathfrak{s}(k)}}{\alpha_{\pi_\mathfrak{s}(k)}+\beta_j+n_{\pi_\mathfrak{s}(k)}+m_j}\dfrac{\prod_{i=1}^p
			\alpha_{i}-\alpha_{\pi_\mathfrak{s}(k)}-n_{\pi_\mathfrak{s}(k)}}{\prod_{i=1,i\neq k}^p
			\alpha_{\pi_\mathfrak{s}(i)}-\alpha_{\pi_\mathfrak{s}(k)}-n_{\pi_\mathfrak{s}(k)}+n_{\pi_\mathfrak{s}(i)}},
\end{multlined}
		\\[4pt]
		b^{-i}_{\vec{n},\vec{m}}=&
\begin{multlined}[t][.825\textwidth]
\prod_{j=1}^q\dfrac{\left(\alpha_{\pi_s(i)}+\beta_j+n_{\pi_s(i)}\right)_{\left(\vec{m}-\vec{\mathfrak t}_i\right)_j}}{\left(\alpha_{\pi_s(i)}+\beta_j+n_{\pi_s(i)}\right)_{\left(\vec{m}-\vec{\mathfrak t}_{i+1}\right)_j}} \sum_{k=i}^p
		\prod_{j=1}^q\dfrac{\left(\alpha_{\pi_s(k)}+\beta_j+n_{\pi_s(k)}\right)_{\left(\vec{m}-\vec{\mathfrak t}_{i+1}\right)_j}}{\left(\alpha_{\pi_s(k)}+\beta_j+n_{\pi_s(k)}+1\right)_{m_j}} \\
	\times\dfrac{\prod_{j=1}^p
		\alpha_{\pi_s(j)}-\alpha_{\pi_s(k)}-n_{\pi_s(k)}}{\prod_{j=i,j\neq k}^p
		\alpha_{\pi_s(j)}-\alpha_{\pi_s(k)}-n_{\pi_s(k)}+n_{\pi_s(j)}},
\end{multlined}
\end{align}
for \(j\in\{1,\ldots,q\}\) and \(i\in\{1,\ldots,p\}\).
The formula for \(b^{-i}_{\vec n,\vec m}\) is asserted when the two shifted
multi-indices in that term are nonnegative; otherwise that backward term is
absent.
\end{teo}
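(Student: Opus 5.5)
The plan is to start from the rewritten expressions \eqref{ReMixedRecurrencej}, \eqref{ReMixedRecurrence0} and \eqref{ReMixedRecurrencei}. Each recurrence coefficient appears there as a finite sum of ratios whose ingredients are only leading (or subleading) coefficients of type-II components and mixed moments of type-I forms, or the dual quantities. All of these are available in closed form for the contour normalization with constant $1$. The leading coefficients come from \eqref{eq:kcoeficientAn}, \eqref{eq:kcoeficientBn} and \eqref{eq:LCB}. The moments come from \eqref{eq:momentosgeneralizadosAn} and \eqref{eq:momentosgeneralizadosBn}. So the proof is a substitution followed by simplification of Pochhammer ratios, treating the three families $b^j$, $b^0$ and $b^{-i}$ separately.

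\emph{Forward coefficients.} Fix $j$. In the numerator of \eqref{ReMixedRecurrencej}, take $s=\pi_t(k)$ and $k\mapsto m_{\pi_t(k)}$ in \eqref{eq:momentosgeneralizadosAn}, with the type-I index $(\vec n+\vec{\mathfrak s}_{j+1},\vec m+\vec t_{j-1})$. This gives
\[
\prod_{h}(\beta_h-\beta_{\pi_t(k)}-m_{\pi_t(k)})_{(\vec m+\vec t_{j-1})_h}\Big/\prod_i(\alpha_i+\beta_{\pi_t(k)}+m_{\pi_t(k)}+1)_{(\vec n+\vec{\mathfrak s}_{j+1})_i}.
\]
Multiply it by the leading coefficient $B^{(\pi_t(k))}_{\vec n,\vec m}[m_{\pi_t(k)}-1]$ from \eqref{eq:LCB}. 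The denominator is handled the same way: its leading coefficient has degree $m_{\pi_t(j)}$ at the index $(\vec n+\vec{\mathfrak s}_j,\vec m+\vec t_j)$. In the resulting quotient most Pochhammer factors in the $\beta$-differences telescope. The reason is that $(\vec m+\vec t_{j-1})_h$ and $m_h$ differ by at most one, and they differ exactly for $h=\pi_t(1),\dots,\pi_t(j-1)$. What survives is a single linear factor $\beta_{\pi_t(i)}-\beta_{\pi_t(k)}-m_{\pi_t(k)}$ in the numerator and the factors with $i\ge j$, $i\ne k$, in the denominator. The $\alpha$-part combines into the displayed products of ratios of $(\cdot)_{n_i}$ and $(\cdot)_{(\vec n+\vec{\mathfrak s}_{j+1})_i}$.

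\emph{Backward coefficients.} For $b^{-i}$ I proceed symmetrically, using the third line of \eqref{ReMixedRecurrencei}. The type-II moments \eqref{eq:momentosgeneralizadosBn} are needed at $s=\pi_s(k)$ and exponent $n_{\pi_s(k)}$, and at $s=\pi_s(i)$ and exponent $n_{\pi_s(i)}-1$. The type-I leading coefficients come from \eqref{eq:kcoeficientAn} at $(\vec n-\vec s_{i-1},\vec m-\vec{\mathfrak t}_{i+1})$. Here the $\beta$-dependence gives the ratios of $(\cdot)_{(\vec m-\vec{\mathfrak t}_{i})_j}$ and $(\cdot)_{(\vec m-\vec{\mathfrak t}_{i+1})_j}$. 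The $\alpha$-differences collapse to linear factors, just as in the forward case.

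\emph{Diagonal coefficient.} I will use the last form in \eqref{ReMixedRecurrence0}. It expresses $b^0$ through the type-I leading coefficients at $(\vec n+\vec{\mathfrak s}_1,\vec m-\vec{\mathfrak t}_1)$ and the moments of $B_{\vec n,\vec m}$ against $x^{\alpha_k}$. The extra term is the ratio of the moment with exponent $n_{\pi_{\mathfrak s}(1)}+1$ to the one with exponent $n_{\pi_{\mathfrak s}(1)}$. By \eqref{eq:momentosgeneralizadosBn} this ratio is a product of elementary ratios, namely $(a-1)_{n}/(a)_{n}=(a-1)/(a+n-1)$ and $(b)_{m}/(b+1)_{m}=b/(b+m)$, and it yields the first displayed product. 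The sum over $k$ yields the second part after the same telescoping. Here the index $k$ runs over components via $\pi_{\mathfrak s}$, using that $(\vec n+\vec{\mathfrak s}_1)$ differs from $\vec n$ only in the $\pi_{\mathfrak s}(1)$ slot.

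The main obstacle is bookkeeping rather than ideas. I need to track exactly which components of the shifted multi-indices differ, so that $(x)_{N+1}/(x)_N=x+N$ can be applied correctly, and to keep the signs $(-1)^{m-1}$ and the factorials $(m-1)!$ of \eqref{eq:LCB} cancelling consistently. The factorials cancel between the numerator and denominator leading coefficients, because in each forward term the relevant degree $m_{\pi_t(k)}-1$ versus $m_{\pi_t(j)}$ has the same structure. I would first verify the case $p=q=1$ against the known Jacobi-type recurrence, to fix signs, before writing the general telescoping.
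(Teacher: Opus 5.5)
Your plan is the paper's proof. The paper also obtains the theorem by substituting \eqref{eq:kcoeficientAn}, \eqref{eq:kcoeficientBn}, \eqref{eq:momentosgeneralizadosAn} and \eqref{eq:momentosgeneralizadosBn}, with both contour constants set to $1$, into \eqref{ReMixedRecurrencej}, the last form of \eqref{ReMixedRecurrence0}, and \eqref{ReMixedRecurrencei}; your choice of shifted indices and of which factors telescope is right.

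One correction to your bookkeeping: the factorials and signs do not cancel between numerator and denominator, since $(m_{\pi_t(k)}-1)!$ and $m_{\pi_t(j)}!$ are unrelated for $k\neq j$. Instead, within each term they cancel against the self factor $(-m)_m=(-1)^m m!$ of the companion moment. In the denominator this leaves $1$. In the forward numerator it leaves $-m_{\pi_t(k)}$, which is exactly the $i=k$ factor of $\prod_{i=1}^q(\beta_{\pi_t(i)}-\beta_{\pi_t(k)}-m_{\pi_t(k)})$. The backward and diagonal sums work the same way, leaving $-n_{\pi_s(k)}$ and $-n_{\pi_{\mathfrak s}(k)}$ respectively.
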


\begin{proof}
The result is obtained by exact substitution of the polynomial coefficients and moments in \eqref{eq:kcoeficientBn}, \eqref{eq:momentosgeneralizadosAn}, \eqref{eq:kcoeficientAn}, and \eqref{eq:momentosgeneralizadosBn}
 in the general coefficient
formulas \eqref{ReMixedRecurrencej}, \eqref{ReMixedRecurrence0} and \eqref{ReMixedRecurrencei}.
\end{proof}
The cyclic notation in~\eqref{eq:ext_nova}
gives a compact form of the preceding step-line
recurrence coefficients.
Fix~\(n\ge0\) and write
\begin{align*}
 n=p\mathfrak r+\mathfrak l=qr+l, &&
 0\le\mathfrak l<p, && 0\le l<q.
\end{align*}
Let \(\vec n\) be the \(p\)-component step-line multi-index of modulus \(n\),
and let \(\vec m^{\,A}\) be the \(q\)-component step-line multi-index of
modulus \(n\). The type-II multi-index paired with \(\vec n\) is
\(\vec m^{\,B}=\vec m^{\,A}+\vec e_{l+1}\), of modulus \(n+1\).
The~relevant shifted components are
\begin{align*}
	\left(\vec{n}+\vec{\mathfrak s}_j\right)_i
	& =\left\lceil \frac{|\vec{n}|+j+1-i}{p}\right\rceil=\mathfrak r+\left\lceil \frac{\mathfrak l+j+1-i}{p}\right\rceil,& j\in\{0,1,\ldots,q\},\\
	\left(\vec m^{\,B}-\vec{\mathfrak t}_i\right)_j
	&=\left\lceil \frac{|\vec m^{\,A}|-i+2-j}{q}\right\rceil
	=r+\left\lceil \frac{l-i+2-j}{q}\right\rceil,
	& i\in\{0,1,\ldots,p\}.
\end{align*}

To keep the two normalizations distinct, let \(\widehat B_N\) denote the
step-line type-II form obtained from \eqref{eq:contourPinheiroB} with
multiplicative constant \(1\). Put
\begin{align}
\label{eq:raw-step-pivot}
 s_N:=r(N,q)+1,&& d_N:=\left\lfloor\frac Nq\right\rfloor,
 &&
 \widehat\kappa_N:=\widehat B_N^{(s_N)}[d_N].
\end{align}
By \eqref{eq:kcoeficientBn},
\begin{align}
\label{eq:raw-step-pivot-explicit}
 \widehat\kappa_N=
 \frac{\prod_{a=1}^p
 (\alpha_a+\beta_{s_N}+d_N+1)_{n_{a,N}}}
 {
 \prod_{h\ne s_N}(\beta_h-\beta_{s_N}-d_N)_{m_{h,N}}
 d_N!\,(m_{s_N,N}-d_N-1)!\,(-1)^{d_N}}.
\end{align}
Here \(m_{s_N,N}=d_N+1\), so the last factorial in the denominator is
\((m_{s_N,N}-d_N-1)!=0!=1\).
The monic form used in \eqref{eq:normalizationB} is therefore
\begin{align}
\label{eq:raw-to-monic-step}
 B_N=\widehat B_N/\widehat\kappa_N.
\end{align}

Specializing the preceding recurrence formulas to the step line gives the
monic coefficients.
\begin{coro}
	\label{coro:PiñeiroRecurrenceSL}
	With \(n=p\mathfrak r+\mathfrak l=qr+l\) as above, set
	\(b_n^{-i}=0\) whenever \(i>n\). For
	\(j\in\{1,\ldots,q\}\) and
	\(1\le i\le\min(p,n)\), the remaining Piñeiro recurrence coefficients on
	the step line are
\begin{align*}
		b^j_n
		&=
		\begin{multlined}[t][.875\textwidth]
		\dfrac{\widehat\kappa_{n+j}}{\widehat\kappa_n}
		\prod_{i=1}^p
		\dfrac{\left(\alpha_i+\beta_{{l+1}+j}+r+1+\mathfrak r\right)_{
		\left\lceil\frac{\mathfrak l+j+2-i}{p}\right\rceil}}
		{\left(\alpha_i+\beta_{{l+1}+j}+r+1+\mathfrak r\right)_{
		\left\lceil\frac{\mathfrak l+j+1-i}{p}\right\rceil}}
		\\
		{}\times
		\sum_{k=j}^q
		\prod_{i=1}^{\mathfrak l}
		\left(\alpha_i+\beta_{k+{l+1}}+r\right)\prod_{i=1}^p
		\dfrac{\left(\alpha_{i+\mathfrak l}+\beta_{k+{l+1}}+r\right)_{\mathfrak r}}
		{\left(\alpha_i+\beta_{k+{l+1}}+r+1\right)_{
		\mathfrak r+\left\lceil\frac{\mathfrak l+j+2-i}{p}\right\rceil}}
		\dfrac{
		\prod_{i=1}^q
		\left(\beta_i-\beta_{k+{l+1}}-r\right)}
		{
		\prod_{
		i=j ,
		i\neq k 
		}^q
		\left(\beta_{i+{l+1}}-\beta_{k+{l+1}}\right)},
		\end{multlined}
\\[4pt]
		b^0_n
		&=
		\begin{multlined}[t][.875\textwidth]
		\prod_{j=1}^q
		\dfrac{\alpha_{\mathfrak l+1}+\beta_j+\mathfrak r+1}
		{\alpha_{\mathfrak l+1}+\beta_{{l+1}+j}+r+\mathfrak r+1}
		\prod_{i=1}^p
		\dfrac{\alpha_i-\alpha_{\mathfrak l+1}-\mathfrak r-1}
		{\alpha_{\mathfrak l+i}-\alpha_{\mathfrak l+1}-1}
		\\
		+\sum_{k=1}^p
		\dfrac{\alpha_{\mathfrak l+1}+\beta_{{l+1}}+\mathfrak r+r+1}
		{\left(\alpha_{\mathfrak l+k}+\beta_{{l+1}}+\mathfrak r+r\right)
		\left(\alpha_{\mathfrak l+1}-\alpha_{\mathfrak l+k}+1\right)}
		\prod_{j=1}^q
		\dfrac{\alpha_{\mathfrak l+k}+\beta_j+\mathfrak r}
		{\alpha_{\mathfrak l+k}+\beta_{{l+1}+j}+\mathfrak r+r}
		\dfrac{
		\prod_{i=1}^p
		\left(\alpha_i-\alpha_{\mathfrak l+k}-\mathfrak r\right)}
		{
		\prod_{
		{i=1 ,
		i\neq k}}^p
		\left(\alpha_{\mathfrak l+i}-\alpha_{\mathfrak l+k}\right)},
		\end{multlined}
		\\[4pt]
		b^{-i}_n
		&=
		\begin{multlined}[t][.875\textwidth]
		\dfrac{\widehat\kappa_{n-i}}{\widehat\kappa_n}
		\prod_{j=1}^q
		\dfrac{\left(\alpha_{p+\mathfrak l-i+1}+\beta_j+\mathfrak r+r\right)_{
		\left\lceil\frac{l+2-i-j}{q}\right\rceil}}
		{\left(\alpha_{p+\mathfrak l-i+1}+\beta_j+\mathfrak r+r\right)_{
		\left\lceil\frac{l+1-i-j}{q}\right\rceil}}
		\\
		{}\times
		\sum_{k=i}^p
		\prod_{j=1}^{l+1}
		\dfrac{1}{\alpha_{p+\mathfrak l-k+1}+\beta_j+\mathfrak r+1}
		\dfrac{
		\prod_{j=1}^q
		\dfrac{\left(\alpha_{p+\mathfrak l-k+1}+\beta_j+\mathfrak r\right)_{
		r+\left\lceil\frac{l+1-i-j}{q}\right\rceil}}
		{\left(\alpha_{p+\mathfrak l-k+1}+\beta_{{l+1}+j}+\mathfrak r+1\right)_r}
		\prod_{j=1}^p
		\left(\alpha_j-\alpha_{p+\mathfrak l-k+1}-\mathfrak r\right)}
		{
		\prod_{
		{j=i ,
		j\neq k}}^p
		\left(\alpha_{p+\mathfrak l-j+1}-\alpha_{p+\mathfrak l-k+1}\right)}.
		\end{multlined}
\end{align*}
Here \((z)_a\) for a negative integer \(a\) has the meaning specified in
\eqref{eq:negative-pochhammer}.
\end{coro}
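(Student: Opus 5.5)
The plan is to specialize Theorem~\ref{teo:PiñeiroRecurrence} to the step-line multi-indices and the step-line permutations \(\pi_s,\pi_{\mathfrak s},\pi_t,\pi_{\mathfrak t}\), and then pass from the raw contour normalization to the monic normalization \eqref{eq:normalizationB} through \eqref{eq:raw-to-monic-step}.

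\textbf{Step 1: effect of the rescaling.} Theorem~\ref{teo:PiñeiroRecurrence} is stated for the raw forms \(\widehat B_N\) with constant \(1\). Substituting \(\widehat B_N=\widehat\kappa_N B_N\) into \(x\widehat B_n=\sum_k \widehat b^k_n\widehat B_{n+k}\) gives
\begin{align*}
 b^k_n=\widehat b^k_n\,\frac{\widehat\kappa_{n+k}}{\widehat\kappa_n}.
\end{align*}
This accounts for the prefactors \(\widehat\kappa_{n\pm j}/\widehat\kappa_n\) and leaves \(b^0_n\) unchanged. The type-I normalization enters only through ratios in \eqref{MixedRecurrence} in which \(A\) appears in both numerator and denominator. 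Hence it does not affect the result.

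\textbf{Step 2: identification of the indices.} With \(n=p\mathfrak r+\mathfrak l=qr+l\), the type-II index \(\vec m^{\,B}\) has modulus \(n+1\), so \(l_+=l+1\) (mod \(q\)). Using the explicit permutations and the affine extensions \eqref{eq:ext_nova}, one gets \(\beta_{\pi_t(k)}+m_{\pi_t(k)}=\beta_{k+l+1}+r\) after absorbing the wrap-around into the shift \(\beta_{sq+j}=\beta_j+s\). Similarly, \(\alpha_{\pi_{\mathfrak s}(k)}+n_{\pi_{\mathfrak s}(k)}=\alpha_{\mathfrak l+k}+\mathfrak r\) and \(\alpha_{\pi_s(k)}+n_{\pi_s(k)}=\alpha_{p+\mathfrak l-k+1}+\mathfrak r\). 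The shifted components \((\vec n+\vec{\mathfrak s}_j)_i\) and \((\vec m^{\,B}-\vec{\mathfrak t}_i)_j\) are given by the ceiling formulas displayed before the corollary.

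\textbf{Step 3: substitution.} I will insert these identifications term by term into the three formulas of Theorem~\ref{teo:PiñeiroRecurrence}. The Pochhammer products \((\alpha_i+\beta_{\pi_t(k)}+m_{\pi_t(k)})_{n_i}\) split according to whether \(n_i=\mathfrak r+1\) (\(i\le\mathfrak l\)) or \(n_i=\mathfrak r\). This produces \(\prod_{i\le\mathfrak l}(\alpha_i+\beta_{k+l+1}+r)\) times a product of length-\(\mathfrak r\) Pochhammers, which after the re-indexing \(i\mapsto i+\mathfrak l\) become the displayed \((\alpha_{i+\mathfrak l}+\beta_{k+l+1}+r)_{\mathfrak r}\). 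The denominators \(\beta_{\pi_t(i)}-\beta_{\pi_t(k)}-m_{\pi_t(k)}+m_{\pi_t(i)}\) simplify to differences \(\beta_{i+l+1}-\beta_{k+l+1}\) of extended parameters, and the same happens for the \(\alpha\)-differences. For \(b^{-i}_n\), ratios of Pochhammers whose lengths differ by \(-1\) or \(0\) occur, so the convention \eqref{eq:negative-pochhammer} is needed to write them uniformly. The boundary case \(i>n\) follows from the boundary discussion after \eqref{MixedRIISL}.

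\textbf{Main obstacle.} I expect the main obstacle to be the bookkeeping of the cyclic wrap-arounds. Whenever \(k+l+1>q\) or \(\mathfrak l+k>p\), the component \(m\) or \(n\) differs by one from \(r\) or \(\mathfrak r\), and this must be absorbed exactly by the affine shift of the extended parameter. I will check this first by splitting \(k\le q-l-1\) versus \(k>q-l-1\) (and analogously for \(\alpha\)) and verifying that both cases give the same unified expression.
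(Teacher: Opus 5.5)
Your proposal is correct and follows the paper's proof. You specialize the general Pi\~neiro recurrence theorem to the step-line indices and permutations via the affine extension \eqref{eq:ext_nova}, then pass from the contour-normalized \(\widehat B_N\) to the monic \(B_N\) by multiplying by \(\widehat\kappa_{n+k}/\widehat\kappa_n\). Your explicit wrap-around checks, such as \(\beta_{\pi_t(k)}+m_{\pi_t(k)}=\beta_{k+l+1}+r\), supply index bookkeeping that the paper leaves implicit.
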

\begin{proof}
We substitute the coefficients formulas obtained in  Theorem~\ref{teo:PiñeiroRecurrence}, taking into account \eqref{eq:raw-step-pivot}, and~\eqref{eq:ext_nova}. This gives the three
coefficients for the contour-normalized forms \(\widehat B_N\), namely the
displayed expressions before the pivot ratios are inserted.
Equation~\eqref{eq:raw-to-monic-step} then multiplies every forward
coefficient by \(\widehat\kappa_{n+j}/\widehat\kappa_n\), leaves the central
coefficient unchanged, and multiplies every backward coefficient by
\(\widehat\kappa_{n-i}/\widehat\kappa_n\). 

\end{proof}

\section{Conclusion}

We have related the step-line mixed multiple orthogonality problem to a
banded recurrence matrix and, under the stated Gauss--Borel assumptions, to a
product of lower and upper bidiagonal factors. For the mixed Pi\~neiro system,
the contour representations yield terminating hypergeometric formulas and
explicit recurrence and factorization coefficients. The distinction between
the contour-normalized forms \(\widehat B_n\) and the monic forms \(B_n\) is
essential: their pivot ratios give the off-diagonal coefficients in the monic
step-line recurrence. The bidiagonal factorization is then obtained exactly
from the Christoffel leading-coefficient formulas; computational checks may
verify the resulting expressions, but they are not used as a proof.

For the mixed Pi\~neiro system, it remains a topic for future research to
exploit the bidiagonal factorization in order to derive sufficient conditions
on the parameters defining the system that guarantee the nonnegativity of the
associated recurrence matrix. Such a characterization is expected to have
applications to the numerically stable computation of its eigenvalues and
eigenvectors, and consequently to the construction of quadrature formulas
associated with mixed multiple orthogonality.

\section*{Acknowledgements}
The authors thanks Thomas Wolfs for enlightening discussions in the beginning of the investigation presented here.

\end{document}